\newtheorem{theorem}{Theorem}[section]
\newtheorem{lemma}[theorem]{Lemma}
\newtheorem{proposition}[theorem]{Proposition}
\newtheorem{corollary}[theorem]{Corollary}
\theoremstyle{definition}
\newtheorem{definition}[theorem]{Definition}
\newtheorem{example}[theorem]{Example}
\theoremstyle{remark}
\newtheorem{remark}[theorem]{Remark}
\numberwithin{equation}{section}
\theoremstyle{plain}
\newtheorem{problem}[theorem]{\bf Problem}
\def\int{\mathop{\roman{int}}}
\def\1{^{-1}}
\def\proof{{\bf Proof. }}
\def\endproof{\hfill \qed}
\numberwithin{equation}{section}
\begin{document}

\title[
Covering maps for locally path-connected spaces
]
   {Covering maps for locally path-connected spaces}

\author{N.~Brodskiy}
\address{University of Tennessee, Knoxville, TN 37996}
\email{brodskiy@@math.utk.edu}

\author{J.~Dydak}
\address{University of Tennessee, Knoxville, TN 37996}
\email{dydak@@math.utk.edu}

\author{B.~Labuz}
\address{University of Tennessee, Knoxville, TN 37996}
\email{labuz@@math.utk.edu}

\author{A.~Mitra}
\address{University of Tennessee, Knoxville, TN 37996}
\email{ajmitra@@math.utk.edu}

\keywords{covering maps, locally path-connected spaces}

\subjclass[2000]{Primary 55Q52; Secondary 55M10, 54E15}
\date{February 14, 2008.}

\begin{abstract}
We define Peano covering maps and prove basic properties analogous to classical covers. Their domain is always locally path-connected but the range may be an arbitrary
topological space. One of characterizations of Peano covering maps is via the uniqueness
of homotopy lifting property for all locally path-connected spaces.

Regular Peano covering maps over path-connected spaces are shown to be identical
with generalized regular covering maps introduced by 
Fischer and Zastrow \cite{FisZas}.
If $X$ is path-connected, then every Peano covering map is equivalent
to the projection $\widetilde X/H\to X$,
where $H$ is a subgroup of the fundamental group of $X$ and $\widetilde X$ equipped with the topology used in \cite{BogSie}, \cite{FisZas}
and introduced in \cite[p.82]{Spa}.
The projection $\widetilde X/H\to X$ is a Peano covering map if and only if
it has the unique path lifting property.
We define a new topology on $\widetilde X$ for which
one has a characterization of $\widetilde X/H\to X$ having the unique path lifting
property if $H$ is a normal subgroup of $\pi_1(X)$. Namely, $H$ must be closed
in $\pi_1(X)$. Such groups include $\pi(\mathcal{U},x_0)$
($\mathcal{U}$ being an open cover of $X$) and the kernel of the natural
homomorphism $\pi_1(X,x_0)\to \check\pi_1(X,x_0)$.
\end{abstract}
\maketitle

\medskip
\medskip
\tableofcontents

\section{Introduction}

As locally complicated spaces naturally appear in mathematics (examples:
boundaries of groups, limits under Gromov-Hausdorff convergence)
there is an effort to extend homotopy-theoretical concepts to such spaces.
This paper is devoted to a theory of coverings by locally path-connected
spaces. Zeeman's example \cite[6.6.14 on p.258]{HilWyl} demonstrates
difficulty in constructing a theory of coverings by non-locally path-connected
spaces (that example amounts to two non-equivalent classical coverings
with the same image of the fundamental groups).
For coverings in the uniform category see \cite{BP3} and \cite{BDLM1}.

To simplify exposition let us introduce the following concepts:

\begin{definition}\label{PeanoSpacesDef}
A topological space $X$ is an {\bf lpc-space}
if it is locally path-connected.
$X$ is a {\bf Peano space}
if it is locally path-connected and connected.
\end{definition}

Fischer and Zastrow \cite{FisZas} defined {\bf generalized regular coverings}
of $X$
as functions $p\colon \bar X\to X$ satisfying the following conditions
for some normal subgroup $H$ of $\pi_1(X)$:
\begin{itemize}
\item[R1.]  $\bar X$ is a Peano space.
\item[R2.] The map $p\colon \bar X\to X$ is a continuous surjection
and $\pi_1(p)\colon \pi_1(\bar X)\to \pi_1(X)$ is a monomorphism onto $H$.
\item[R3.] For every Peano space $Y$, for
every continuous function $f\colon (Y, y)\to (X, x_0)$ with $f_\ast(\pi_1(Y, y)) \subset H$, and for
every $\bar x \in \bar X$ with $p(\bar x) = x_0$, there is a unique continuous
$g\colon (Y,y)\to (\bar X,\bar x))$ with $p \circ  g = f$.
\end{itemize}

Our view of the above concept is that of being universal in a certain class of maps
and we propose a different way of defining covering maps
between Peano spaces in Section \ref{SECTION Peano-coverings}.
\par Our first observation is that each path-connected space
$X$ has its universal Peano space $P(X)$, the set $X$ equipped with new topology,
such that the identity function $P(X)\to X$ corresponds to a generalized regular
covering for $H=\pi_1(X)$. That way quite a few results in the literature
can be formally deduced from earlier results for Peano spaces.
\par The way the projection $P(X)\to X$ is characterized in \ref{LPCExistsThm}  generalizes to the concept
of {\bf Peano maps} in Section \ref{SECTION Peano-coverings}
and our {\bf Peano covering maps} combine Peano maps with two classical concepts:
Serre fibrations and unique path lifting property.
Peano covering maps possess several properties analogous to the classical
covering maps \cite{Lim} (example: local Peano covering maps are Peano covering maps).
One of them is that they are all quotients $\widehat X_H$ of
the universal path space $\widetilde X$ equipped with the topology
defined in the proof of Theorem 13 on p.82 in \cite{Spa} and used successfully
by Bogley-Sieradski \cite{BogSie} and Fischer-Zastrow \cite{FisZas}.
It turns out the endpoint projection $\widehat X_H\to X$ is a Peano covering
map if and only if it has the uniqueness of path lifts property (see \ref{ProjIsPeanoCMCharThm}).
In an effort to unify Peano covering maps with uniform covering maps of
\cite{BP3} and \cite{BDLM1} (we will explain the connection in \cite{BDLM2})
we were led to a new topology on $\widetilde X_H$ (see Section \ref{SECTION: BS topology}). Its main advantage is that there is a necessary and sufficient condition
for $\widetilde X_H\to X$ to have the unique path lifting property in case $H$ is a normal
subgroup of $\pi_1(X)$. It is $H$ being closed in $\pi_1(X)$.
That explains Theorem 6.9 of \cite{FisZas} as the basic groups there turn out
to be closed in $\pi_1(X)$. As an application of our approach we
show existence of a universal Peano covering map over a given path-connected space.

\par
We thank Sasha Dranishnikov for bringing the work of Fischer-Zastrow \cite{FisZas}
to our attention. We thank Greg Conner, Katsuya Eda, Ale\v s Vavpeti\' c, and Ziga Virk for helpful comments.

\section{Constructing Peano spaces}\label{SECTION: UPeanoSpace}

The purpose of this section is to discuss various ways of constructing
new Peano spaces.

\subsection{Universal Peano space}
In analogy to the universal covering spaces we introduce the following notion:

\begin{definition}\label{LPCspacesDef}
Given a topological space $X$ its {\bf universal lpc-space}
$lpc(X)$ is an lpc-space together with a continuous map (called the {\bf universal Peano map})
$\pi\colon lpc(X)\to X$ satisfying the following universality condition:
\par For any map $f\colon Y\to X$ from an lpc-space $Y$
there is a unique continuous lift $g\colon Y\to lpc(X)$ of $f$ (that means $\pi\circ g=f$).
\end{definition}

\begin{theorem}\label{LPCExistsThm}
Every space $X$ has a universal lpc-space. It is homeomorphic to the set $X$ equipped
with a new topology, the one generated by all path-components of all open subsets of the existing topology of $X$. 
\end{theorem}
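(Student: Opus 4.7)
The plan is to construct $lpc(X)$ concretely as the set $X$ equipped with the topology $\tau'$ generated by the family $\mathcal{B}$ of all path-components of all $\tau$-open subsets of $X$, where $\tau$ denotes the original topology; the projection $\pi$ will be the identity map, which is automatically continuous because $\tau' \supseteq \tau$. First I would check that $\mathcal{B}$ really is a basis: if $x \in C_1 \cap C_2$ with $C_i$ a path-component of $U_i \in \tau$, then the path-component $C$ of $x$ inside $U_1 \cap U_2$ is path-connected, meets each $C_i$, and hence lies in $C_1 \cap C_2$, so $x \in C \subseteq C_1 \cap C_2$ with $C \in \mathcal{B}$.

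Next I would prove the key observation that every path $\alpha\colon [0,1]\to (X,\tau)$ is also continuous into $(X,\tau')$. Given $t \in [0,1]$ and a basic neighborhood $C \in \mathcal{B}$ of $\alpha(t)$ (a path-component of some $U \in \tau$), pick an open interval $I \ni t$ with $\alpha(I) \subseteq U$; since $\alpha|_I$ is a path in $U$ through $\alpha(t) \in C$, its image lies in $C$. This single fact has two immediate consequences: each basic open $C \in \mathcal{B}$, being path-connected in $(X,\tau)$ by definition, remains path-connected in $(X,\tau')$; and therefore $(X,\tau')$ is locally path-connected, since $\mathcal{B}$ is a basis by path-connected sets.

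Then I would verify the universality condition. Given $f\colon Y \to X$ continuous with $Y$ an lpc-space, define $g\colon Y \to lpc(X)$ to be $f$ as a set map; this is forced by the requirement $\pi \circ g = f$ and the fact that $\pi$ is the identity, so uniqueness is automatic. For continuity, fix a basic $C \in \mathcal{B}$ (path-component of $U \in \tau$) and $y \in g^{-1}(C) = f^{-1}(C)$. Because $Y$ is lpc, the path-component $D$ of $y$ in the open set $f^{-1}(U)$ is open in $Y$. Since $f(D)$ is path-connected in $U$ and contains $f(y) \in C$, we get $f(D) \subseteq C$, hence $D \subseteq g^{-1}(C)$, so $g^{-1}(C)$ is open. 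Uniqueness of $lpc(X)$ up to homeomorphism is then the standard consequence of the universal property.

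There is no single hard step; the argument is a clean unfolding of the definitions. The mildly subtle point — and the one I would make sure to emphasize — is the observation that $[0,1]$-paths into $(X,\tau)$ automatically become $[0,1]$-paths into the finer topology $(X,\tau')$. This is what simultaneously prevents the refinement from destroying path-connectedness of the chosen basic opens and supplies local path-connectedness of $lpc(X)$; without it, one might worry that refining the topology could fragment the path-components of the open sets we used as basic neighborhoods.
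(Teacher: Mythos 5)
Your proposal is correct and follows essentially the same route as the paper: verify that the path-components of open sets form a basis, observe that the lift of any map from an lpc-space (in particular any path $I\to X$) is continuous into the new topology, and deduce both local path-connectedness and the universal property from that single observation. The only difference is expository — the paper compresses your two uses of this fact into the single inclusion $f(c(y,f^{-1}(U)))\subset c(f(y),U)$.
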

\proof Let $U$ be an open set in $X$ containing the point $x$ and $c(x,U)$ be the path component of $x$ in $U$.
Since $z\in c(x,U)\cap c(y,V)$ implies $c(z,U\cap V)\subset c(x,U)\cap c(y,V)$,
the family $\{c(x,U)\}$, where $U$ ranges over all open subsets of $X$ and $x$
ranges over all elements of $U$, forms a basis.

Given a map $f\colon Y\to X$ and given an open set $U$ of $X$ containing $f(y)$
one has $f(c(y,f^{-1}(U)))\subset c(f(y),U)$. That proves $f\colon Y\to lpc(X)$
is continuous if $Y$ is an lpc-space. It also proves $lpc(X)$ is locally path-connected
as any path in $X$ induces a path in $lpc(X)$.
\endproof

\begin{remark}\label{ReftoFisZas4.17}
The topology above was mentioned in Remark 4.17 of \cite{FisZas}.  
After the first version of this paper was written we were informed by Greg Conner
of his unpublished preprint \cite{ConFea} with David Fearnley, where that topology
is discussed and its properties (compactness, metrizability) are investigated.
\end{remark}

If $X$ is path-connected, then $lpc(X)$ is a {\bf universal Peano space} $P(X)$
in the following sense: given a map $f\colon Z\to X$ from a Peano space $Z$
to $X$ there is a unique lift $g\colon Z\to P(X)$ of $f$.

In the remainder of this section we give sufficient conditions for a function
on an lpc-space to be continuous. Those conditions are in terms of maps
from basic Peano spaces: the arc in the first-countable case and {\bf hedgehogs}
(see Definition  \ref{GeneralizedHEarrings})
in the arbitrary case.

\begin{proposition}\label{MapsFromPeanoToFirstCountable}
Suppose $f\colon Y\to X$ is a function from a first-countable lpc-space $Y$. $f$ is continuous if $f\circ g$ is continuous for every path $g\colon I\to Y$
in $Y$.
\end{proposition}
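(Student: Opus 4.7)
I would prove this by contradiction, exploiting first-countability and local path-connectedness to build a single offending path.

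Suppose $f$ is not continuous at some $y_0 \in Y$. Then there exists an open set $U \subset X$ with $f(y_0) \in U$ such that every neighborhood of $y_0$ contains a point whose image under $f$ lies outside $U$. The plan is to produce a sequence $y_n \to y_0$ in $Y$ with $f(y_n) \notin U$ that lies on a single continuous path $g \colon I \to Y$ ending at $y_0$. Once we have this, $f \circ g$ is continuous by hypothesis, so $f(g(t)) \to f(y_0) \in U$ as $t \to 0^+$, and since $U$ is open this forces $f(g(1/n)) = f(y_n) \in U$ for large $n$, a contradiction.

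To build the path, first I would use first-countability and local path-connectedness to produce a descending neighborhood basis $V_1 \supset V_2 \supset \cdots$ at $y_0$ consisting of path-connected open sets: start with any countable basis, take finite intersections to make it nested, and inside each such neighborhood choose a path-connected open set around $y_0$ (intersected with the previous choice to preserve nesting). For each $n$, pick $y_n \in V_n$ with $f(y_n) \notin U$, and choose a path $\gamma_n \colon [1/(n+1), 1/n] \to V_n$ with $\gamma_n(1/n) = y_n$ and $\gamma_n(1/(n+1)) = y_{n+1}$, which exists because $V_n$ is path-connected. Concatenating, define $g \colon I \to Y$ by $g(t) = \gamma_n(t)$ for $t \in [1/(n+1),1/n]$ and $g(0) = y_0$.

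Continuity of $g$ away from $0$ is immediate from the pasting lemma on finitely many closed intervals around any interior point. Continuity at $0$ is the one point that needs a sentence: given any neighborhood $W$ of $y_0$, pick $N$ with $V_N \subset W$; then for $t \in [0,1/N)$ the point $g(t)$ lies in some $V_n$ with $n \geq N$, hence in $W$.

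I do not expect any serious obstacle; the only thing to watch carefully is that the basis $\{V_n\}$ can simultaneously be chosen nested \emph{and} path-connected, which requires alternating the two shrinking operations (intersection and picking a path-component neighborhood inside) rather than doing them in one shot.
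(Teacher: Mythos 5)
Your proof is correct and is essentially the paper's argument: both use first-countability to choose a shrinking neighborhood basis at the bad point, pick points $y_n$ in the $n$-th neighborhood with $f(y_n)\notin U$, string them onto a single path converging to $y_0$, and contradict openness of $U$ using continuity of $f$ composed with that path. The only cosmetic difference is in assembling the path: the paper splices paths from $y_0$ to each $y_n$ (each lying in $V_n$), whereas you connect consecutive $y_n$'s inside nested path-connected neighborhoods; both splicings are continuous at $0$ for the same reason.
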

\proof Suppose $U$ is open in $X$. It suffices to show that for each $y\in f^{-1}(U)$
there is an open set $V$ in $Y$ containing $y$ such that the path component
of $y$ in $V$ is contained in $f^{-1}(U)$. Pick a basis of neighborhoods $\{V_n\}_{n\ge 1}$
of $y$ in $Y$ and assume for each $n\ge 1$ there is a path $\alpha_n$ in $V_n$
joining $y$ to a point $y_n\notin f^{-1}(U)$. Those paths can be spliced to one path
$\alpha$
from $y$ to $y_1$ and going through all points $y_n$, $n\ge 2$.
$f\circ \alpha$ starts from $f(y)$ and goes through all points $f(y_n)$, $n\ge 1$.
However, as $U$ is open, it must contain almost all of them, a contradiction.
\endproof

The construction of the topology on $lpc(X)$ in \ref{LPCExistsThm} can be
done in the spirit of the finest topology on $X$ that retains
the same continuous maps from a class of spaces. 

\begin{proposition}\label{UniversalArcLiftingConstruction}
Suppose $X$ is a path-connected topological space
and $\mathcal{P}$ is a class of Peano spaces.
The family $\mathcal{T}$ of subsets $U$ of $X$ such that
$f^{-1}(U)$ is open in $Z\in\mathcal{P}$ for any map $f\colon Z\to X$ in the original topology,
is a topology and $\mathcal{P}(X):=(X,\mathcal{T})$ is a Peano space.
\end{proposition}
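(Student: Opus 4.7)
The plan is to verify the topology axioms and then establish local path-connectedness and (path-)connectedness of $\mathcal{P}(X)$ separately. Checking that $\mathcal{T}$ is a topology is routine, since $f^{-1}$ preserves arbitrary unions and finite intersections. The feature of $\mathcal{T}$ that powers the rest of the argument is its built-in universal property: whenever $Z\in\mathcal{P}$ and $f\colon Z\to X$ is continuous in the original topology, the same map $f\colon Z\to\mathcal{P}(X)$ is automatically continuous --- this is immediate from the definition of $\mathcal{T}$.

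For local path-connectedness, given $U\in\mathcal{T}$ and $x\in U$, I would form $c(x,U)$, the path-component of $x$ in $U$ viewed as a subset of $\mathcal{P}(X)$, and show $c(x,U)\in\mathcal{T}$. Given $Z\in\mathcal{P}$ and continuous $f\colon Z\to X$, the set $f^{-1}(U)$ is open in $Z$; for each $z\in f^{-1}(c(x,U))$ the local path-connectedness of $Z$ yields a path-connected open $W\subset f^{-1}(U)$ containing $z$. Since $f\colon Z\to\mathcal{P}(X)$ is continuous by the universal property, $f(W)$ is a path-connected subset of $U\subset\mathcal{P}(X)$ containing $f(z)$, whence $f(W)\subset c(x,U)$ and $W\subset f^{-1}(c(x,U))$. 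This proves $c(x,U)\in\mathcal{T}$, giving local path-connectedness.

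For path-connectedness, any path $\gamma\colon I\to X$ in the original topology remains continuous as $\gamma\colon I\to\mathcal{P}(X)$ --- by the same universal property applied to $I\in\mathcal{P}$, the implicit assumption under which the construction is natural --- so path-connectedness of $X$ transfers to $\mathcal{P}(X)$. The main potential obstacle is the second step, where paths in $Z$ (where local path-connectedness is available) must be related to paths in $\mathcal{P}(X)$ (in whose topology openness has to be detected); the universal property dispatches this cleanly, which is why defining $\mathcal{T}$ as the finest topology keeping all original-continuous maps from $\mathcal{P}$ continuous is the right framework.
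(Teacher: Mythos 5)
Your proof is correct and follows essentially the same route as the paper's: the key step in both is to show that a path component $C$ of a new-open set $U$ is itself new-open, by pulling back along an original-continuous $f\colon Z\to X$, using local path-connectedness of $Z$ to find a path-connected neighborhood $W\subset f^{-1}(U)$, and then pushing $W$ forward via the automatic continuity of $f\colon Z\to\mathcal{P}(X)$ to conclude $f(W)\subset C$. You are in fact slightly more complete than the paper: its proof only establishes local path-connectedness and silently skips connectedness of $\mathcal{P}(X)$, which (as your $\mathcal{P}=\emptyset$-style worry implicitly shows) genuinely requires something like $I\in\mathcal{P}$ so that paths in $X$ remain paths in $\mathcal{P}(X)$; your explicit flagging of that implicit hypothesis is the right reading of the statement, consistent with its use in Corollary \ref{PeanoForFirstCountable} where $\mathcal{P}$ is exactly $\{I\}$.
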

\proof Since $f^{-1}(U\cap V)=f^{-1}(U)\cap f^{-1}(V)$, $\mathcal{T}$
is a topology on $X$. Suppose $U\in\mathcal{T}$ and $C$ is a path component
of $U$ in the new-topology.
Suppose $f\colon Z\to X$ is a map and $f(z_0)\in C$. As $f^{-1}(U)$ is open,
there is a connected neighborhood $V$ of $z_0$ in $Z$ satisfying $f(V)\subset U$.
As $f(V)$ is path-connected, $f(V)\subset C$ and $C\in \mathcal{T}$.
\endproof

In case of first-countable spaces $X$ we have a very simple characterization of the universal
Peano map of $X$:

\begin{corollary}\label{PeanoForFirstCountable}
If $X$ is a first-countable path-connected topological space, then a map $f\colon Y\to X$
is a universal Peano map if and only if $Y$ is a Peano space, $f$ is a bijection, and $f$
has the path lifting property.
\end{corollary}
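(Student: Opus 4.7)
For the forward direction I would use uniqueness of universal objects. Any universal Peano map over $X$ is canonically isomorphic over $X$ to $\pi\colon P(X)\to X$, so the three listed properties transfer from $\pi$. For $\pi$ itself, Theorem~\ref{LPCExistsThm} presents $P(X)$ as a Peano space (since $X$ is path-connected) with $\pi$ the identity on underlying sets, hence a bijection; path lifting is immediate by applying universality to a path $I\to X$, since $I$ is Peano.

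For the converse, assume $Y$ is a Peano space, $f$ is a bijection, and $f$ has the path lifting property. I would build a homeomorphism $\phi\colon Y\to P(X)$ with $\pi\circ\phi=f$; once this is in hand, $f$ inherits universality from $\pi$. The map $\phi$ arises by applying the universal property of $\pi$ to $f$ (legitimate because $Y$ is Peano), and it is bijective on underlying sets because both $\pi$ and $f$ are. The remaining task is to check that $\phi^{-1}$ is continuous, and this is where first-countability enters. If $\{U_n\}$ is a countable neighborhood basis at $x\in X$, then by the description of the topology in the proof of Theorem~\ref{LPCExistsThm} the sets $\{c(x,U_n)\}$ form a countable neighborhood basis at $x$ in $P(X)$, so $P(X)$ is first-countable and locally path-connected. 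Proposition~\ref{MapsFromPeanoToFirstCountable} thus reduces continuity of $\phi^{-1}\colon P(X)\to Y$ to continuity of $\phi^{-1}\circ\gamma$ for each path $\gamma\colon I\to P(X)$.

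For such a $\gamma$, viewed also as a path $I\to X$ via $\pi$, the path-lifting hypothesis on $f$ supplies a continuous $\tilde\gamma\colon I\to Y$ with $f\circ\tilde\gamma=\pi\circ\gamma$. Then $\pi\circ(\phi\circ\tilde\gamma)=\pi\circ\gamma$, and since $\pi$ is the identity on underlying sets this forces $\phi\circ\tilde\gamma=\gamma$, so $\phi^{-1}\circ\gamma=\tilde\gamma$ is continuous. I expect the only delicate point to be this last identification: it relies essentially on $\pi$ being a set-theoretic identity, so that the lift produced by path lifting in $Y$ and the lift produced by composing $\tilde\gamma$ with $\phi$ are forced to agree with $\gamma$. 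Everything else is formal bookkeeping with universal properties, and the argument would fail without first-countability since Proposition~\ref{MapsFromPeanoToFirstCountable} would no longer apply.
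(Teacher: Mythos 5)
Your argument is correct and matches the paper's proof in substance: both hinge on Proposition \ref{MapsFromPeanoToFirstCountable} (continuity out of a first-countable lpc-space is detected on paths) together with the bijective correspondence of paths supplied by path lifting plus bijectivity of $f$. You apply that proposition directly to $\phi^{-1}\colon P(X)\to Y$, whereas the paper routes the same idea through the auxiliary path-generated topology $\mathcal{A}(X)$ of Proposition \ref{UniversalArcLiftingConstruction} and identifies $P(X)=\mathcal{A}(X)$; the difference is purely organizational.
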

\proof Consider $\mathcal{A}(X)$ as in \ref{UniversalArcLiftingConstruction},
where $\mathcal{A}$ consists of the unit interval.
Notice the identity function $P(X)\to \mathcal{A}(X)$ is continuous as $P(X)$ is first-countable
(use \ref{MapsFromPeanoToFirstCountable}). Since the topology on $\mathcal{A}(X)$ is finer
than that on $P(X)$, $P(X)=\mathcal{A}(X)$.
Since $f$ induces a homeomorphism from $\mathcal{A}(Y)$ to $\mathcal{A}(X)$ (due to the uniqueness of
path lifting property of $f$),
the composition $\mathcal{A}(Y)\to \mathcal{A}(X)\to P(X)$ is a homeomorphism and $f\colon Y\to P(X)$
must be a homeomorphism (its inverse is $P(X)\to \mathcal{A}(Y)\to Y$).
\endproof

The construction in \ref{UniversalArcLiftingConstruction} can be used
to create counter-examples to \ref{PeanoForFirstCountable} in case $X$ is not first-countable.

\begin{example}\label{ExampleOfNonFirstCountable}
Let $X$ be the cone over an uncountable discrete set $B$.
Subsets of $X$ that miss the vertex $v$ are declared open if and only if
they are open in the CW topology on $X$.  A subset $U$ of $X$ that contains $v$ is declared open
if and only if $U$ contains all but countably many edges of the cone
and $U\setminus\{v\}$ is open in the CW topology on $X$ (that means $X$ is a hedgehog
if $B$ is of cardinality $\omega_1$ - see \ref{GeneralizedHEarrings}).
Notice $\mathcal{A}(X)$ is $X$ equipped with the CW topology,
the identity function $\mathcal{A}(X)\to X$ has the path lifting property but is not a homeomorphism.
\end{example}
\proof Notice every subset of $X\setminus\{v\}$ that meets each edge in at most one point
is discrete. Hence a path in $X$ has to be contained in the union of finitely many
edges. That means $\mathcal{A}(X)$ is $X$ with the CW topology.
\endproof

We generalize \ref{ExampleOfNonFirstCountable} as follows:

\begin{definition}\label{GeneralizedHEarrings}
A {\bf generalized Hawaiian Earring} is the wedge 
\par\noindent 
$(Z,z_0)=\bigvee\limits_{s\in S} (Z_s,z_s)$ of pointed Peano spaces indexed by a directed set $S$ and equipped
with the following topology (all wedges in this paper are considered with that particular
topology):
\begin{enumerate}
\item $U\subset Z\setminus\{z_0\}$ is open if and only if
$U\cap Z_s$ is open for each $s\in S$,
\item $U$ is an open neighborhood of $z_0$ if and only if 
there is $t\in S$ such that $Z_s\subset U$ for all $s > t$
and $U\cap Z_s$ is open for each $s\in S$.
\end{enumerate}
A {\bf hedgehog} is a generalized Hawaiian Earring
$(Z,z_0)=\bigvee\limits_{s\in S} (Z_s,z_s)$ such that each $(Z_s,z_s)$
is homeomorphic to $(I,0)$.
\end{definition}

Our definition of generalized Hawaiian Earrings is different from the definition of Cannon and Conner~\cite{CanCon Big}. Also, the preferred terminology in \cite{CanCon Big}
is that of a {\bf big Hawaiian Earring}.

Observe each generalized Hawaiian Earring is a Peano space.

\begin{lemma}\label{BasicHedgeHogLemma}
Let $S$ be a basis of neighborhoods of $x_0$ in $X$ ordered by inclusion
(i.e., $U \leq V$ means $V\subset U$).
If, for each $U\in S$, $\alpha_U\colon I\to U$ is a path in $U$ starting
from $x_0$, then their wedge 
$$\bigvee\limits_{U\in S}\alpha_U\colon \bigvee\limits_{U\in S}(I_U,0_U)\to (X,x_0)$$
is continuous, where $(I_U,0_U)=(I,0)$ for each $U\in S$.
\end{lemma}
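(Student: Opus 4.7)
The plan is to verify continuity of the wedge map $f := \bigvee_{U\in S}\alpha_U$ by checking it separately away from and at the wedge point $z_0$. Away from $z_0$, the wedge topology restricted to each summand $I_U\setminus\{0_U\}$ coincides with the subspace topology from $I_U$, and there $f$ agrees with $\alpha_U$, which is continuous by hypothesis. So by condition (1) of the generalized Hawaiian Earring topology, $f^{-1}(V)\setminus\{z_0\}$ is open in $\bigvee I_U$ for every open $V\subset X$. It remains to handle points near $z_0$.

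So the main task is continuity at $z_0$. Fix an open neighborhood $W$ of $x_0=f(z_0)$ in $X$. Since $S$ is a basis of neighborhoods at $x_0$, there exists $t\in S$ with $t\subset W$. For every $s\in S$ with $s>t$ (equivalently $s\subsetneq t$ under the convention $U\le V\iff V\subset U$), the path $\alpha_s$ takes values in $s\subset t\subset W$, so $I_s\subset f^{-1}(W)$. For every $s\in S$, continuity of $\alpha_s$ gives that $f^{-1}(W)\cap I_s=\alpha_s^{-1}(W)$ is open in $I_s$. These two facts together say exactly that $f^{-1}(W)$ satisfies condition (2) in Definition \ref{GeneralizedHEarrings}, so $f^{-1}(W)$ is an open neighborhood of $z_0$ in $\bigvee_{U\in S}(I_U,0_U)$.

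Combined with the earlier observation, $f^{-1}(V)$ is open for every open $V\subset X$, regardless of whether it contains $x_0$, since we can treat it as the union of an open set avoiding $z_0$ and (if applicable) an open neighborhood of $z_0$ of the form above. This proves the lemma.

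I do not anticipate a real obstacle: the argument is essentially bookkeeping that the two clauses defining open sets in the generalized Hawaiian Earring topology are exactly what a map out of the wedge needs in order to be continuous, provided $S$ is indexed by a basis at the target basepoint. The only point that requires a moment's care is the interplay between the reversed ordering on $S$ (finer neighborhoods are larger) and the strict inequality $s>t$ in clause (2); this is harmless because a basis of neighborhoods is always downward-refinable, so once $t\subset W$ is secured, every strictly finer $s>t$ automatically satisfies $\alpha_s(I_s)\subset W$.
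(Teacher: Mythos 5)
Your proof is correct and follows essentially the same route as the paper's: continuity away from the wedge point is immediate from the continuity of each $\alpha_U$, and continuity at the wedge point follows because $S$ is a basis at $x_0$, so some $t\in S$ lies in the given neighborhood $W$ and every $s>t$ maps $I_s$ into $W$, which is exactly clause (2) of Definition \ref{GeneralizedHEarrings}. The paper's version is just a terser statement of the same verification.
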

\proof Only the continuity of $g=\bigvee\limits_{U\in S}\alpha_U$
at the base-point of the hedgehog $\bigvee\limits_{U\in S}(I_U,0_U)$
is not totally obvious. However, if $V$ is a neighborhood of $x_0$ in $X$,
then $g^{-1}(V)$ contains all $I_U$ if $U\subset V$ and $g^{-1}(V)\cap I_W$
is open in $I_W$ for all $W\in S$.
\endproof

\begin{proposition}\label{MapsFromPeanoToArbitrary}
Suppose $f\colon Y\to X$ is a function from an lpc-space $Y$. 
$f$ is continuous if $f\circ g$ is continuous for every map $g\colon Z\to Y$
from a hedgehog $Z$ to $Y$.
\end{proposition}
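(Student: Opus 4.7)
The plan is to argue by contradiction, extending the strategy of Proposition~\ref{MapsFromPeanoToFirstCountable} by replacing the spliced path with a hedgehog map. Suppose $f$ is not continuous: there is an open $U\subseteq X$ and a point $y\in f^{-1}(U)$ such that $f^{-1}(U)$ fails to be a neighborhood of $y$. Because $Y$ is locally path-connected, the path component $c(y,V)$ of $y$ in any open $V\ni y$ is open in $Y$, and $\{c(y,V)\}$ is a neighborhood basis at $y$. By our assumption no $c(y,V)$ lies inside $f^{-1}(U)$, so for each open neighborhood $V$ of $y$ I would select a point $y_V\in c(y,V)$ with $f(y_V)\notin U$ together with a path $\alpha_V\colon I\to c(y,V)\subseteq V$ running from $y$ to $y_V$.

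Next, I would bundle these paths into a single map from a hedgehog. Let $S$ be the directed set of open neighborhoods of $y$, ordered by reverse inclusion. The wedge map $g:=\bigvee_{V\in S}\alpha_V\colon \bigvee_{V\in S}(I_V,0_V)\to Y$ is continuous by Lemma~\ref{BasicHedgeHogLemma}. The hypothesis then yields continuity of $f\circ g$, so $(f\circ g)^{-1}(U)$ is open in the hedgehog and contains the basepoint. The definition of the wedge topology produces $t\in S$ with $I_V\subseteq(f\circ g)^{-1}(U)$ for every $V>t$, and evaluating at the endpoint $1_V$ gives $f(y_V)\in U$ for all such $V$, contradicting $y_V\notin f^{-1}(U)$.

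The main obstacle is the possibility that $S$ has a maximum element, i.e.\ that $y$ admits a smallest open neighborhood. In that case no $V\in S$ satisfies $V>t$ when $t$ is the maximum, the containment condition is vacuous, and no contradiction arises. I plan to bypass this pathology by replacing $S$ with the enlarged directed set $S\times\NN$ under the product order, and assigning the same path $\alpha_{(V,n)}:=\alpha_V$ to each index $(V,n)$. The continuity of $g=\bigvee_{(V,n)}\alpha_{(V,n)}$ follows by the same reasoning as in Lemma~\ref{BasicHedgeHogLemma}: given an open $W\ni y$, choose $V_0\in S$ with $V_0\subseteq W$; then $\alpha_{(V,n)}(I)\subseteq V\subseteq V_0\subseteq W$ whenever $(V,n)\geq(V_0,0)$. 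Since $(t,m+1)>(t,m)$ always holds in $S\times\NN$, a strict successor is always available, and the contradiction in the previous paragraph goes through unchanged.
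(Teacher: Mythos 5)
Your proof is correct and follows essentially the same route as the paper: assume $f^{-1}(U)$ is not a neighborhood of $y$, choose in each basic path-connected neighborhood a path from $y$ exiting $f^{-1}(U)$, assemble these into a single hedgehog map via Lemma~\ref{BasicHedgeHogLemma}, and read off a contradiction from the wedge topology at the basepoint. The one place you go beyond the paper is the passage to the index set $S\times\NN$: the paper's conclusion ``there is $V\in S$ so that $I_V\subset h^{-1}(U)$'' tacitly assumes that the witness $t$ in the definition of the wedge topology admits some strictly larger index, which can fail exactly when $y$ has a smallest open neighborhood (then $S$ has a maximum and the condition ``$Z_s\subset U$ for all $s>t$'' is vacuous); your duplication trick repairs this degenerate case cleanly and the rest of the argument is unaffected.
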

\proof Assume $U$ is open in $X$ and $x_0=f(y_0)\in U$.
Suppose for each path-connected neighborhood $V$ of $y_0$ in $Y$ there is a path
$\alpha_V\colon (I,0)\to (V,y_0)$ such that $\alpha_V(1)\notin f^{-1}(U)$.
By \ref{BasicHedgeHogLemma} the wedge $g=\bigvee\limits_{V\in S}\alpha_V$
is a map $g$ from a hedgehog to $Y$ (here $S$ is the family of all path-connected neighborhoods
of $y_0$ in $Y$). Hence $h=f\circ g$ is continuous
and there is $V\in S$ so that $I_V\subset h^{-1}(U)$.
That means $f(\alpha_V(I))\subset U$, a contradiction.
\endproof

\subsection{Basic topology on $\widetilde X$}

The philosophical meaning of this section is that many results can be reduced
to those dealing with Peano spaces via the universal Peano space construction. Let us illustrate this point of view by discussing
a topology on $\widetilde X$.

Suppose $(X,x_0)$ is a pointed topological space.
Consider the space $\widetilde X$ of homotopy classes of paths
in $X$ originating at $x_0$.
It has an interesting topology (see the proof of Theorem 13 on p.82 in \cite{Spa})
that has been put to use in \cite{BogSie} and \cite{FisZas}. Its basis consists of
sets $B([\alpha],U)$ ($U$ is open in $X$, $\alpha$ joins $x_0$
and $\alpha(1)\in U$) defined as follows: $[\beta]\in B([\alpha],U)$ if and only if there is a path $\gamma$
in $U$ from $\alpha(1)$ to $\beta(1)$ such that $\beta$ is homotopic rel. endpoints to
the concatenation $\alpha\ast\gamma$.

$\widetilde X$ equipped with the above topology will be denoted by
$\widehat X$ as in \cite{BogSie}.

Both \cite{BogSie} and \cite{FisZas} consider quotient spaces $\widehat X/H$,
where $H$ is a subgroup of $\pi_1(X,x_0)$. We find it more convenient to follow
\cite[pp.82-3]{Spa}:

\begin{definition}\label{BSModHDef}
Suppose $H$ is a subgroup of $\pi_1(X,x_0)$.
Define $\widetilde X_H$ as the set of equivalence classes of paths
in $X$ under the relation $\alpha\sim_H\beta$ defined via $\alpha(0)=\beta(0)=x_0$,
$\alpha(1)=\beta(1)$ and $[\alpha\ast\beta^{-1}]\in H$
(the equivalence class of $\alpha$ under the relation $\sim_H$ will be denoted by $[\alpha]_H$).
\end{definition}

To introduce a topology on $\widetilde X_H$ we define sets $B_H([\alpha]_H,U)$
(denoted by $<\alpha,U>$ on p.82 in \cite{Spa}), where $U$ is open in $X$, $\alpha$ joins $x_0$
and $\alpha(1)\in U$, as follows: $[\beta]_H\in B_H([\alpha]_H,U)$ if and only if there is a path $\gamma$
in $U$ from $\alpha(1)$ to $\beta(1)$ such that $[\beta\ast (\alpha\ast\gamma)^{-1}]\in H$ (equivalently, $\beta\sim_H \alpha\ast\gamma$).

$\widetilde X_H$ equipped with the topology (which we call the
{\bf basic topology on $\widetilde X_H$})
whose basis consists of $B_H([\alpha]_H,U)$, where $U$ is open in $X$, $\alpha$ joins $x_0$
and $\alpha(1)\in U$, is denoted by $\widehat X_H$
in analogy to the notation $\widehat X$ in \cite{BogSie} that corresponds
to $H$ being trivial.

Given a path $\alpha$ in $X$ and a path $\beta$ in $X$ from $x_0$ to $\alpha(0)$ one can define
a {\bf standard lift} $\hat \alpha$ of it to $\widehat X_H$ originating at $[\beta]_H$ by the formula
$\hat \alpha(t)=[\beta \ast\alpha_t]_H$, where $\alpha_t(s)=\alpha(s\cdot t)$
for $s,t\in I$ (see \cite[Proposition 6.6.3]{HilWyl}).

Let us extract the essence of the proof of \cite[Theorem 13 on pp.82--83]{Spa}:

\begin{lemma}\label{SemiSCLemma}
Suppose $X$ is a path-connected space and $H$ is a subgroup of $\pi_1(X,x_0)$.
An open set $U\subset X$ is evenly covered by $p_H\colon \widehat X_H\to X$
if and only if $U$ is locally path-connected and the image of $h_\alpha\colon \pi_1(U,x_1)\to \pi_1(X,x_0)$
is contained in $H$ for any path $\alpha$ in $X$ from $x_0$ to any $x_1\in U$.
\end{lemma}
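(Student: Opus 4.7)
The plan is to establish both implications by carefully analyzing how the basic neighborhoods $B_H([\alpha]_H,V)$ project to path-components of $V$.

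For the forward implication, suppose $p_H^{-1}(U)=\bigsqcup_i S_i$ with each $S_i$ open and mapped homeomorphically onto $U$ by $p_H$. To obtain local path-connectedness of $U$, fix $x\in U$, choose $[\alpha]_H\in p_H^{-1}(x)$, and let $S_i$ be the sheet containing it. Since $S_i$ is open in $\widehat X_H$, it contains some basic neighborhood $B_H([\alpha]_H,V)$ with $V\ni x$ open in $X$. By definition, $p_H(B_H([\alpha]_H,V))$ is exactly the path-component of $x$ in $V$, and it is open in $U$ because $p_H|_{S_i}$ is a homeomorphism; letting $V$ range over a neighborhood basis of $x$ produces a neighborhood basis of path-connected sets at $x$. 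For the image condition, given a loop $\beta$ at $x_1\in U$ in $U$ and a path $\alpha$ from $x_0$ to $x_1$, form the standard lift $\hat\beta$ starting at $[\alpha]_H$. Since $\hat\beta$ is continuous, starts in $S_i$, and satisfies $p_H\circ\hat\beta=\beta\subset U$, its image stays in $p_H^{-1}(U)=\bigsqcup_j S_j$ and, by connectedness, remains in $S_i$. Thus $\hat\beta(1)=[\alpha\ast\beta]_H$ lies in $S_i$ and projects to $x_1$, so injectivity of $p_H|_{S_i}$ forces $[\alpha\ast\beta]_H=[\alpha]_H$, i.e., $h_\alpha([\beta])=[\alpha\ast\beta\ast\alpha^{-1}]\in H$.

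For the backward implication, assume $U$ is locally path-connected and the image condition holds. I would write $p_H^{-1}(U)$ as the union of all $B_H([\alpha]_H,U)$ with $\alpha(1)\in U$ and show these subsets are either equal or disjoint: if they share a point $[\beta]_H$, then $\beta\sim_H\alpha_i\ast\gamma_i$ with $\gamma_i\subset U$, whence $\alpha_1\sim_H\alpha_2\ast(\gamma_2\ast\gamma_1^{-1})$, and a short concatenation argument yields $B_H([\alpha_1]_H,U)=B_H([\alpha_2]_H,U)$. I would then show $p_H|_{B_H([\alpha]_H,U)}$ is a homeomorphism onto the path-component $C$ of $\alpha(1)$ in $U$. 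Surjectivity onto $C$ is immediate by concatenating $\alpha$ with a $U$-path; injectivity is the one place the image condition enters: if $[\beta_1]_H,[\beta_2]_H$ both project to $y\in C$, then $\beta_i\sim_H\alpha\ast\gamma_i$ with $\gamma_i$ paths in $U$ from $\alpha(1)$ to $y$, and the loop $\gamma_1\ast\gamma_2^{-1}$ at $\alpha(1)$ in $U$ satisfies $h_\alpha([\gamma_1\ast\gamma_2^{-1}])\in H$ by hypothesis, which gives $\beta_1\sim_H\beta_2$. Continuity is built into the basic topology; openness follows because the image of any sub-basic neighborhood $B_H([\beta]_H,W)$ with $W\subset U$ open is the path-component of $\beta(1)$ in $W$, which is open in $W$ by local path-connectedness inherited from $U$.

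The main obstacle is conceptual rather than technical: $B_H([\alpha]_H,U)$ projects onto only the path-component of $\alpha(1)$ in $U$ rather than onto all of $U$, so the evenly-covered structure must be read off component by component. Local path-connectedness of $U$ makes this transparent because its path-components are open. A related delicate point is openness of the restricted projection, which requires the hereditary passage of local path-connectedness to open subsets $W\subset U$.
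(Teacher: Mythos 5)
Your proof is correct and follows essentially the same route as the paper: in the forward direction the standard lift of a loop must stay in a single sheet (forcing $[\alpha\ast\gamma]_H=[\alpha]_H$, i.e.\ the $h_\alpha$ condition) and the openness of sheets gives local path-connectedness, while in the reverse direction the sets $B_H([\alpha]_H,U)$ partition $p_H^{-1}(U)$ and are mapped homeomorphically onto path components of $U$, with injectivity being exactly where the hypothesis on $h_\alpha$ enters. Like the paper, you leave implicit the last step of assembling the sheets over the various path components of $U$ into sheets mapping onto all of $U$ (which works because every fiber has the same cardinality $|H\backslash\pi_1(X,x_0)|$); this is a shared, harmless gloss.
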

\proof Recall that $U$ is {\bf evenly covered} by $p_H$ (see \cite[p.62]{Spa}) if $p_H^{-1}(U)$
is the disjoint union of open subsets $\{U_s\}_{s\in S}$ of $\widehat X_H$
each of which is mapped homeomorphically onto $U$ by $p_H$.
Also, recall $h_\alpha\colon \pi_1(U,x_1)\to \pi_1(X,x_0)$ is given by
$h_\alpha([\gamma])=[\alpha\ast\gamma\ast\alpha^{-1}]$.

Suppose $U$ is evenly covered, $\gamma$ is a loop in $(U,x_1)$,
and $\alpha$ is a path from $x_0$ to $x_1$.
If $[\alpha]_H\ne [\alpha\ast\gamma]_H$, then they belong
to two different sets $U_u$ and $U_v$, $u,v\in S$. However, there is a path from
$[\alpha]_H$ to $[\alpha\ast\gamma]_H$ in $p_H^{-1}(U)$ given by the standard lift of $\gamma$, a contradiction.
Thus $[\alpha]_H= [\alpha\ast\gamma]_H$ and
$[\alpha\ast\gamma\ast\alpha^{-1}]\in H$.

To show that $U$ is locally path-connected, take a point $x_1\in U$, pick
a path $\alpha$ from $x_0$ to $x_1$ and select the unique $s\in S$
so that $[\alpha]_H\in U_s$. There is an open subset $V$ of $U$ satisfying
$B_H([\alpha]_H,V)\subset U_s$. As $p_H|U_s$ maps $U_s$ homeomorphically
onto $U$, $p_H(B_H([\alpha]_H,V))$ is an open neighborhood of $x_1$ in $U$
and it is path-connected.

Suppose $U$ is locally path-connected and the image of $h_\alpha\colon \pi_1(U,x_1)\to \pi_1(X,x_0)$
is contained in $H$ for any path $\alpha$ in $X$ from $x_0$ to any $x_1\in U$.
Pick a path component $V$ of $U$ and notice sets $B_H([\beta]_H,V)$,
$\beta$ ranging over paths from $x_0$ to points of $V$, are either identical or disjoint.
Observe $p_H|B_H([\beta]_H,V)$ maps $B_H([\beta]_H,V)$ homeomorphically
onto $V$. Thus each $V$ is evenly covered and that is sufficient to conclude $U$
is evenly covered.
\endproof

\par As in \cite[p.81]{Spa}, given an open cover $\mathcal{U}$ of $X$,
$\pi(\mathcal{U},x_0)$ is the subgroup of $\pi_1(X,x_0)$
generated by elements of the form $[\alpha\ast\gamma\ast\alpha^{-1}]$,
where $\gamma$ is a loop in some $U\in\mathcal{U}$
and $\alpha$ is a path from $x_0$ to $\gamma(0)$.

Here is our improvement of \cite[Theorem 13 on p.82]{Spa} and \cite[Theorem 6.1]{FisZas}:

\begin{theorem}\label{BasicThmOnWidehatX}
If $X$ is a path-connected space and $H$ is a subgroup of $\pi_1(X,x_0)$,
then the endpoint projection $p_H\colon \widehat X_H\to X$
is a classical covering map if and only if $X$ is a Peano space and there is an open covering
$\mathcal{U}$ of $X$ so that $\pi(\mathcal{U},x_0)\subset H$.
\end{theorem}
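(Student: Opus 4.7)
The plan is to deduce both directions from Lemma \ref{SemiSCLemma} (which characterizes when an open set $U\subset X$ is evenly covered by $p_H$). So the entire argument reduces to producing, or extracting, a cover of $X$ by sets satisfying the two conditions of that lemma: local path-connectedness and $\operatorname{im}(h_\alpha)\subset H$.

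For the forward direction, assume $p_H$ is a classical covering map. Then every $x\in X$ lies in some evenly covered open set $U_x$; these cover $X$. By \ref{SemiSCLemma}, each $U_x$ is locally path-connected, so $X$ is locally path-connected (since local path-connectedness is a local property on an open cover), and hence, being path-connected by hypothesis, a Peano space. Setting $\mathcal{U}=\{U_x\}_{x\in X}$, the second conclusion of \ref{SemiSCLemma} says that for every $U\in\mathcal{U}$, every $x_1\in U$, every path $\alpha$ from $x_0$ to $x_1$, and every loop $\gamma$ in $(U,x_1)$, the element $[\alpha\ast\gamma\ast\alpha^{-1}]$ lies in $H$. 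These are exactly the generators of $\pi(\mathcal{U},x_0)$, so $\pi(\mathcal{U},x_0)\subset H$.

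For the backward direction, assume $X$ is a Peano space and $\mathcal{U}$ is an open cover with $\pi(\mathcal{U},x_0)\subset H$. I would show $p_H$ is a classical covering map by exhibiting, for each $x\in X$, an evenly covered open neighborhood. Given $x\in X$, pick $U\in\mathcal{U}$ containing $x$ and let $V$ be the path component of $x$ in $U$; since $X$ is locally path-connected, $V$ is open in $X$. Furthermore, $V$ inherits local path-connectedness as an open subset of $X$. For any $x_1\in V$, any path $\alpha$ from $x_0$ to $x_1$, and any loop $\gamma$ in $(V,x_1)$, the loop $\gamma$ is in particular a loop in $U\in\mathcal{U}$, so $[\alpha\ast\gamma\ast\alpha^{-1}]\in \pi(\mathcal{U},x_0)\subset H$. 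By \ref{SemiSCLemma}, $V$ is evenly covered, completing the proof. Surjectivity of $p_H$ is automatic from path-connectedness of $X$, and continuity is built into the definition of the basic topology.

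The main (minor) obstacle is making the forward direction's observation that an evenly covered neighborhood is itself locally path-connected --- the statement of \ref{SemiSCLemma} delivers this, but one must notice that the global locally-path-connectedness of $X$ then follows by taking the open cover by evenly covered sets. The backward direction is routine once one has the trick of passing to path components of sets in $\mathcal{U}$ to manufacture open locally path-connected neighborhoods whose loops still live inside some element of $\mathcal{U}$.
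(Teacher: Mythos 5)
Your proposal is correct and follows exactly the route the paper intends: its entire proof of this theorem is the single line ``Apply \ref{SemiSCLemma}'', and you have simply written out the details of that application in both directions. The only remark worth making is that in the backward direction you could apply \ref{SemiSCLemma} directly to each $U\in\mathcal{U}$ (which is already locally path-connected as an open subset of the Peano space $X$), so passing to path components is not needed, though it is harmless.
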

\proof Apply \ref{SemiSCLemma}.
\endproof

\begin{proposition}\label{UniversalCoveringForXAndPX}
$\widehat {P(X)}_H$
is naturally homeomorphic to $\widehat X_H$ if $X$ is path-connected.
\end{proposition}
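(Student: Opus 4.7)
The plan is to exhibit a natural bijection between the underlying sets of $\widehat X_H$ and $\widehat{P(X)}_H$ and then verify it is a homeomorphism by comparing their bases directly.

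First I would show that the sets of path-homotopy classes match. Since the identity map $P(X)\to X$ is continuous (the new topology is finer), every path in $P(X)$ is a path in $X$. Conversely, because $I$ is locally path-connected, Theorem \ref{LPCExistsThm} applied to any path $\alpha\colon I\to X$ produces a unique continuous lift to $P(X)$, which must coincide with $\alpha$ as a set-function. Hence paths in $X$ based at $x_0$ and paths in $P(X)$ based at $x_0$ are literally the same functions. Applying the same universal property to $I\times I$ shows that path homotopies rel.\ endpoints also agree, so $\pi_1(P(X),x_0)=\pi_1(X,x_0)$ as groups, and for each subgroup $H$ the relation $\sim_H$ is identical on either side. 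This yields a natural set bijection $\phi\colon \widehat X_H\to \widehat{P(X)}_H$ sending $[\alpha]_H$ to $[\alpha]_H$.

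Next I would verify that every basic open $B_H([\alpha]_H,V)$ of $\widehat{P(X)}_H$ (with $V$ open in $P(X)$) is open in $\widehat X_H$. Given $[\beta]_H\in B_H([\alpha]_H,V)$, choose a witness path $\gamma$ in $V$ from $\alpha(1)$ to $\beta(1)$, and choose a basic $P(X)$-neighborhood $c(\beta(1),U)\subset V$ with $U$ open in $X$. I claim $B_H([\beta]_H,U)\subset B_H([\alpha]_H,V)$: any witness path $\eta$ in $U$ from $\beta(1)$ to some $\delta(1)$ is confined to the path component $c(\beta(1),U)\subset V$, so $\gamma\ast\eta$ is a path in $V$ certifying $[\delta]_H\in B_H([\alpha]_H,V)$.

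Finally, for the reverse containment of topologies, take a basic open $B_H([\alpha]_H,U)$ of $\widehat X_H$ and $[\beta]_H$ inside it; the set $V=c(\beta(1),U)$ is open in $P(X)$ and the same concatenation argument gives $B_H([\beta]_H,V)\subset B_H([\alpha]_H,U)$. Hence the two topologies coincide and $\phi$ is a homeomorphism, natural in $X$ and $H$. The only subtle point I anticipate is ensuring that the universal property of $P(X)$ yields an actual bijection of paths and homotopies rather than a surjection or injection, but this is immediate from the uniqueness clause in Definition \ref{LPCspacesDef} applied to $Y=I$ and $Y=I\times I$; everything else is direct manipulation of the definitions of $B_H([\alpha]_H,\cdot)$ and of the basis for $P(X)$.
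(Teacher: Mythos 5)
Your proof is correct and follows essentially the same route as the paper: identify paths and homotopies in $X$ with those in $P(X)$ via the universal property applied to $I$ and $I\times I$, conclude that $\pi_1$ and the relation $\sim_H$ agree, and then match the topologies. The only difference is that you spell out the mutual refinement of the two bases (handling general open sets $V$ of $P(X)$ via basic neighborhoods $c(\beta(1),U)$), whereas the paper simply asserts that the basic open sets are identical; your added detail is a legitimate filling-in of that step, not a different argument.
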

\proof Since continuity of $f\colon (Z,z_0)\to (P(X),x_0)$, for any Peano space $Z$,
is equivalent to the continuity of $f\colon (Z,z_0)\to (X,x_0)$, paths in $(P(X),x_0)$
correspond to paths in $(X,x_0)$. Also, $\pi_1(P(X),x_0)\to \pi_1(X,x_0)$
is an isomorphism so $H$ is a subgroup of both $\pi_1(P(X),x_0)$ and  $\pi_1(X,x_0)$,
and the equivalence classes of relations $\sim_H$ are identical in both spaces
$\widetilde {P(X)}$ and $\widetilde X$. Notice that basis open sets are identical in $\widehat {P(X)}_H$ and $\widehat X_H$.
\endproof

\begin{remark}\label{ApplyingPXRem}
In view of \ref{UniversalCoveringForXAndPX} some results in \cite{FisZas} dealing
with maps $f\colon Y\to X$, where $Y$ is Peano, can be derived formally from corresponding
results for $f\colon Y\to P(X)$. A good example is Lemma 2.8
in \cite{FisZas}:
\par $p\colon \tilde X \to X$ has the unique path lifting property if and only if
$\tilde X$ is
simply connected.
\par It follows formally from Corollary 4.7 in \cite{BogSie}:
\par The universal endpoint projection $p\colon \hat Z \to Z$ for a connected and locally
path-connected space $Z$ has the unique path lifting property if and only if $\hat Z$ is simply connected.
\end{remark}

When working in the pointed topological category the space $\widehat X_H$
is equipped with the base-point $\widehat x_0$ equal to the equivalence class
of the constant path at $x_0$.

Let us illustrate $\widehat X_H$
in the case of $H=\pi_1(X,x_0)$.

\begin{proposition}\label{CaseHBeingWholeGroup}
If $H=\pi_1(X,x_0)$, then
\begin{itemize}
\item[a.] The endpoint projection
$p_H\colon (\widehat X_H,\widehat x_0)\to (X,x_0)$
is an injection and
\par\noindent
$p_H(B([\alpha]_H,U))$ is the path component of $\alpha(1)$ in $U$,
\item[b.] $\widehat X_H$
is a Peano space,
\item[c.]  Given a map $g\colon (Z,z_0)\to (X,x_0)$ from a pointed Peano space to
$(X,x_0)$, there is a unique lift $h\colon (Z,z_0)\to (\widehat X_H,\widehat x_0)$ of $g$
($p_H\circ h=g$).
\end{itemize}
\end{proposition}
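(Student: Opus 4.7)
The plan rests on the observation that when $H=\pi_1(X,x_0)$ the relation $\sim_H$ collapses to ``having the same endpoint''. Indeed, if $\alpha(0)=\beta(0)=x_0$ and $\alpha(1)=\beta(1)$, then $\alpha\ast\beta^{-1}$ is already a loop at $x_0$, so its class automatically lies in $\pi_1(X,x_0)=H$. This yields the first half of (a) at once: $p_H([\alpha]_H)=\alpha(1)$ is a well-defined injection (since $X$ is path-connected, any $y\in X$ is the endpoint of some path from $x_0$, but surjectivity is not even being asserted). For the second half, note that $[\beta]_H\in B_H([\alpha]_H,U)$ iff there is a path $\gamma$ in $U$ from $\alpha(1)$ to $\beta(1)$, the $H$-condition being vacuous. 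Hence $p_H(B_H([\alpha]_H,U))$ consists of exactly those points of $U$ that can be joined to $\alpha(1)$ by a path in $U$, i.e., the path component of $\alpha(1)$ in $U$.

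For (b) I would then compare $\widehat X_H$ with the universal lpc-space $lpc(X)$ built in Theorem \ref{LPCExistsThm}. Part (a) shows that $p_H$ is an injection whose image of a basic open set $B_H([\alpha]_H,U)$ is exactly a path component $c(\alpha(1),U)$ of an open subset of $X$ --- and such sets form, by definition, the basis of $lpc(X)$. Thus $p_H$ is a homeomorphism from $\widehat X_H$ onto $lpc(X)$. Since $X$ is path-connected, $lpc(X)=P(X)$ is a Peano space, and therefore so is $\widehat X_H$.

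For (c), given $g\colon(Z,z_0)\to(X,x_0)$ from a pointed Peano space, the universal property in Theorem \ref{LPCExistsThm} provides a unique continuous lift $\tilde g\colon Z\to lpc(X)$ of $g$, and composing with the homeomorphism from (b) delivers the desired continuous $h\colon(Z,z_0)\to(\widehat X_H,\widehat x_0)$; base-points match because $\widehat x_0=[c_{x_0}]_H$ corresponds to $x_0\in lpc(X)$ under $p_H$, and $\tilde g(z_0)=x_0$ as a set map. Uniqueness of $h$ is immediate from the injectivity of $p_H$ established in (a): any two lifts of $g$ must agree at the level of underlying sets, and continuity fixes them as maps into $\widehat X_H$. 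No single step is a serious obstacle --- the whole proof is the observation that $\sim_H$ degenerates to endpoint-equality, followed by identifying the basic topology on $\widehat X_H$ with that of the universal lpc-space.
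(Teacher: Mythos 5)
Your proposal is correct, and the core observation --- that for $H=\pi_1(X,x_0)$ the relation $\sim_H$ degenerates to equality of endpoints, so that $p_H$ is injective and $p_H(B_H([\alpha]_H,U))=c(\alpha(1),U)$ --- is exactly the paper's proof of part (a). Where you diverge is in parts (b) and (c): you prove the stronger intermediate claim that $p_H$ is a homeomorphism of $\widehat X_H$ onto $lpc(X)$ (basic open sets on one side map bijectively to basic open sets on the other) and then import both the Peano property and the unique lifting property wholesale from Theorem \ref{LPCExistsThm}. The paper instead argues (c) directly: it defines $h(z)=[\alpha_z]_H$ for a chosen path $\alpha_z$ from $z_0$ to $z$ and checks continuity by computing $h^{-1}(B_H([\alpha_z]_H,U))$ to be the path component of $z$ in $g^{-1}(U)$ --- which is, of course, the same computation that underlies the proof of Theorem \ref{LPCExistsThm}, so the two arguments are mathematically equivalent; yours simply factors the work through the earlier universal property and makes explicit the identification $\widehat X_{\pi_1(X,x_0)}\cong P(X)$ that the paper only records afterwards in Definition \ref{PeanoSpacePointedDef}. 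One small point to tidy up: having (rightly) declined to assert surjectivity of $p_H$, you cannot literally say $p_H$ is a homeomorphism \emph{onto} $lpc(X)$ unless $X$ is path-connected (the standing hypothesis throughout this part of the paper); in general its image is the path component of $x_0$ in $lpc(X)$, which is open in $lpc(X)$ (being a path component of the open set $X$) and hence still a Peano space, and the lift $\tilde g$ of a map from a path-connected $Z$ automatically lands in it, so your argument survives with that one-line adjustment.
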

\proof a). Clearly, $p_H(B_H([\alpha]_H,U))$ equals path component of $\alpha(1)$ in $U$.
If $[\beta_1]_H$ and $[\beta_2]_H$ map to the same point $x_1$,
then $\beta_1(1)=\beta_2(1)$ and $\gamma=\beta_1\ast\beta_2^{-1}$ is a loop.
Hence $[\gamma]\in H$ and $[\beta_2]_H=[\gamma\ast\beta_2]_H=[\beta_1]_H$
proving $p_H$ is an injection.
\par b) is well-established in both \cite{BogSie} and \cite{FisZas}. Notice it follows from a).
\par c). For each $z\in Z$ pick a path $\alpha_z$ from $z_0$ to $z$ in $Z$.
Define $h(z)$ as $[\alpha_z]_H$ and notice $h$ is continuous as 
$h^{-1}(B_H([\alpha_z]_H,U))$ equals the path component of $g^{-1}(U)$ containing $z$
(use Part a)). As $p_H$ is injective, there is at most one lift of $g$.
\endproof

In view of \ref{CaseHBeingWholeGroup} we have a convenient definition
of a universal Peano space in the pointed category:

\begin{definition}\label{PeanoSpacePointedDef}
By the {\bf universal Peano space} $P(X,x_0)$ of $(X,x_0)$
we mean the pointed space $(\widehat X_H,\widehat x_0)$, $H=\pi_1(X,x_0)$,
and the {\bf universal Peano map of $(X,x_0)$}
is the endpoint projection $P(X,x_0)\to (X,x_0)$.
Equivalently, $P(X,x_0)$ is $(P(C),x_0)$, where $C$ is the path component of $x_0$ in $X$.
\end{definition}

Due to standard lifts the endpoint projection
 $p_H\colon \widehat X_H\to (X,x_0)$ always has the path lifting property.
 Thus the issue of interest is the uniqueness of path lifting property of $p_H$.

Here is a necessary and sufficient condition for $p_H$ to have the
unique path lifting property (compare it to \cite[Theorem 4.5]{BogSie} for
Peano spaces):

\begin{proposition}\label{ProjHasUPLP}
If $X$ is a path-connected space and $x_0\in X$, then the following conditions are equivalent:
\begin{itemize}
\item[a.] $p_H\colon (\widehat X_H,\widehat x_0)\to (X,x_0)$ has the unique path lifting property,
\item[b.]  The image of $\pi_1(p_H)\colon \pi_1(\widehat X_H,\widehat x_0)\to \pi_1(X,x_0)$ is contained in $H$.
\end{itemize}
\end{proposition}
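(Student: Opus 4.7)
The plan is to prove each implication separately, using the standard lifts $\widehat\alpha(t)=[\beta\ast\alpha_t]_H$ introduced right before Lemma \ref{SemiSCLemma} as a reference point throughout.

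For $(a)\Rightarrow(b)$, take any loop $L\colon(I,\{0,1\})\to(\widehat X_H,\widehat x_0)$ and set $\alpha:=p_H\circ L$. Then $L$ is a lift of $\alpha$ starting at $\widehat x_0=[c_{x_0}]_H$, and the standard lift $\widehat\alpha(t)=[\alpha_t]_H$ (taking $\beta$ to be the constant path at $x_0$) is another such lift. Uniqueness of path lifting forces $L=\widehat\alpha$; hence $\widehat x_0=L(1)=\widehat\alpha(1)=[\alpha]_H$, which says precisely that $[\alpha]\in H$, i.e.\ $\pi_1(p_H)[L]\in H$.

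For $(b)\Rightarrow(a)$, it suffices to show that any lift $\widehat\alpha_1$ of $\alpha$ starting at a point $[\beta]_H$ coincides with the standard lift $\widehat\alpha(t)=[\beta\ast\alpha_t]_H$. Fix $t\in I$. Since $\widehat\alpha_1(t)$ lies over $\alpha(t)$, write $\widehat\alpha_1(t)=[\gamma_t]_H$ for some path $\gamma_t$ from $x_0$ to $\alpha(t)$. Build the loop $L_t$ in $\widehat X_H$ based at $\widehat x_0$ by concatenating three pieces: the standard lift $\widehat\gamma_t$ of $\gamma_t$ from $\widehat x_0$ (which ends at $[\gamma_t]_H$); the reverse of $\widehat\alpha_1\big|_{[0,t]}$ reparameterized to $I$ (a path from $[\gamma_t]_H$ back to $[\beta]_H$); and the reverse of the standard lift $\widehat\beta$ of $\beta$ from $\widehat x_0$ (from $[\beta]_H$ back to $\widehat x_0$). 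Applying $p_H$ sends $L_t$ to the loop $\gamma_t\ast\alpha_t^{-1}\ast\beta^{-1}$ in $(X,x_0)$. Hypothesis $(b)$ puts its class in $H$; rearranging $[\gamma_t\ast\alpha_t^{-1}\ast\beta^{-1}]=[\gamma_t\ast(\beta\ast\alpha_t)^{-1}]$, this is exactly the relation $\gamma_t\sim_H\beta\ast\alpha_t$, so $\widehat\alpha_1(t)=[\gamma_t]_H=[\beta\ast\alpha_t]_H=\widehat\alpha(t)$ for every $t$.

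The main obstacle is the reverse direction: $\widehat\alpha_1$ and $\widehat\alpha$ need not share endpoints a priori, so one cannot directly concatenate them into a loop inside $\widehat X_H$ to feed into $(b)$. The trick is to bridge both endpoints back to the base point $\widehat x_0$ via standard lifts of $\gamma_t$ and $\beta$, producing a loop whose $p_H$-image is precisely the word that hypothesis $(b)$ declares to lie in $H$. The parameter $t$ has to be allowed to vary, so the same construction is applied to every restriction $\widehat\alpha_1\big|_{[0,t]}$ uniformly; no further continuity argument is needed because the conclusion is pointwise equality of functions.
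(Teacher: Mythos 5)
Your proof is correct and takes essentially the same approach as the paper: for (a)$\Rightarrow$(b) both compare a loop with the standard lift of its projection, and for (b)$\Rightarrow$(a) both close up the given lift against standard lifts of representative paths to form a loop in $\widehat X_H$, project it, and read off the relation $\sim_H$ from hypothesis (b). Your version merely carries this out at an arbitrary starting point $[\beta]_H$ and explicitly for each $t$, where the paper restricts to lifts from $\widehat x_0$ and invokes the restriction $\alpha|[0,t]$.
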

\proof a)$\implies$b). Given a loop $\alpha$ in $\widehat X_H$
it must equal the standard lift of $\beta=p_H(\alpha)$. For the standard lift
of $\beta$ to be a loop in $\widehat X_H$ one must have $[\beta]\in H$.
\par
b)$\implies$a) Given a lift $\bar \alpha$ of a path $\alpha$ in $(X,x_0)$
it suffices to show $\bar \alpha(1)=[\alpha]_H$ as that implies $\bar \alpha$
is the standard lift of $\alpha$ (use $\alpha|[0,t]$ instead of $\alpha$).
Pick a path $\beta$ satisfying $\hat\alpha(1)=[\beta]_H$
and let $\hat\beta$ be its standard lift. As $\bar\alpha\ast(\hat\beta)^{-1}$
is a loop in $\widehat X_H$, its image $\gamma=p_H(\bar\alpha\ast(\hat\beta)^{-1})$
generates an element $[\gamma]$ of $H$.
Hence $\alpha\sim \gamma\ast\beta$ and $\bar\alpha(1)=[\beta]_H=[\alpha]_H$.
\endproof

\section{A new topology on $\widetilde X$}\label{SECTION: BS topology}

We do not know how to characterize subgroups $H$ of $\pi_1(X,x_0)$
for which $p_H\colon \widehat X_H\to X$ has the unique path lifting property.
Therefore we will create a new topology on $\widetilde X_H$ for which
analogous question has a satisfactory answer in the case $H$ being a normal subgroup.

Given an open cover $\mathcal{U}$ of $X$, a subgroup $H$ of
$\pi_1(X,x_0)$, a path $\alpha$ in $X$
originating at $x_0$, and $V\in \mathcal{U}$ containing $x_1=\alpha(1)$
 define $B_H([\alpha]_H,\mathcal{U},V)\subset \widetilde X_H$ as follows: $[\beta]_H\in B_H([\alpha]_H,\mathcal{U},V)$
if and only if there is
a path $\gamma_0$ in $V$ originating at $x_1=\alpha(1)$
and a loop $\lambda$ at $x_1$ such that
$[\lambda]\in\pi(\mathcal{U},x_1)$ and
$\beta \sim_H \alpha \ast\lambda\ast\gamma_0$.
\par Observe $[\beta]_H\in B_H([\alpha]_H,\mathcal{U},V)$ implies
$B_H([\alpha]_H,\mathcal{U},V)=B_H([\beta]_H,\mathcal{U},V)$
 and
\par\noindent
$B_H([\alpha]_H,\mathcal{U}\cap\mathcal{V},V_1\cap V_2)\subset B_H([\alpha]_H,\mathcal{U},V_1)\cap B_H([\alpha]_H,\mathcal{V},V_2)$, so the family
of sets $\{B_H([\alpha]_H,\mathcal{U},V)\}$ forms a basis of a new topology
on $\widetilde X_H$. When we consider $\widetilde X_H$
as a topological space, then we use precisely that topology.
In the particular case of $H=\{1\}$, the trivial subgroup of $\pi_1(X,x_0)$,
we simplify $\widetilde X_H$ to $\widetilde X$.
Observe that, as $\pi_1(X,x_0)$ is the fiber of the endpoint
projection $p\colon \widetilde X\to X$, any subgroup $G$ of $\pi_1(X,x_0)$
can be considered as a subspace of $\widetilde X$ and we may consider it
as a topological space that way.

Notice the identity function $\widehat X_H\to\widetilde X_H$
is continuous. Indeed, $B_H([\alpha]_H,V)\subset B_H([\alpha]_H,\mathcal{U},V)$
for any $V\in \mathcal{U}$ containing $\alpha(1)$.

When dealing with the pointed topological category the space $\widetilde X_H$
is equipped with the base-point $\widetilde x_0$ equal to the equivalence class
of the constant path at $x_0$.

Let us prove a basic functorial property of our construction.

\begin{proposition}\label{FunctorialityOfWidetilde}
Suppose $f\colon (X,x_0)\to (Y,y_0)$ is a map of pointed topological
spaces. If $H$ and $G$ are subgroups of $\pi_1(X,x_0)$ and $\pi_1(Y,y_0)$,
respectively, such that $\pi_1(f)(H)\subset G$,
then $f$ induces a natural continuous function
$\tilde f\colon(\widetilde X_H,\widetilde x_0)\to(\widetilde Y_G,\widetilde y_0)$.
\end{proposition}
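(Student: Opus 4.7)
The natural candidate is $\tilde f([\alpha]_H):=[f\circ\alpha]_G$. The plan is to (i) check that this formula descends to a well-defined set-theoretic function, (ii) verify continuity by showing that each basic open set in $\widetilde Y_G$ around $\tilde f([\alpha]_H)$ contains the image of a suitable basic open set in $\widetilde X_H$ around $[\alpha]_H$, and (iii) note that $\tilde f$ preserves the base-point for free, since $f$ sends the constant path at $x_0$ to the constant path at $y_0$.

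For well-definedness, suppose $\alpha\sim_H\beta$, so $\alpha$ and $\beta$ start at $x_0$, end at a common point $x_1$, and $[\alpha\ast\beta^{-1}]\in H$. Then $f\circ\alpha$ and $f\circ\beta$ both start at $y_0$ and end at $f(x_1)$, and the identity $f\circ(\alpha\ast\beta^{-1})=(f\circ\alpha)\ast(f\circ\beta)^{-1}$ together with the hypothesis $\pi_1(f)(H)\subset G$ gives $[(f\circ\alpha)\ast(f\circ\beta)^{-1}]\in G$. Hence $f\circ\alpha\sim_G f\circ\beta$.

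For continuity, fix a basic open neighborhood $B_G([f\circ\alpha]_G,\mathcal{V},W)$ of $\tilde f([\alpha]_H)$, where $\mathcal{V}$ is an open cover of $Y$ and $W\in\mathcal{V}$ contains $f(\alpha(1))$. Set $\mathcal{U}:=f^{-1}(\mathcal{V})=\{f^{-1}(V'):V'\in\mathcal{V}\}$, which is an open cover of $X$, and let $V:=f^{-1}(W)\in\mathcal{U}$, which contains $x_1=\alpha(1)$. I claim $\tilde f$ sends $B_H([\alpha]_H,\mathcal{U},V)$ into $B_G([f\circ\alpha]_G,\mathcal{V},W)$. Indeed, if $[\beta]_H\in B_H([\alpha]_H,\mathcal{U},V)$, write $\beta\sim_H\alpha\ast\lambda\ast\gamma_0$ with $\gamma_0$ a path in $V$ starting at $x_1$ and $[\lambda]\in\pi(\mathcal{U},x_1)$. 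Then $f\circ\gamma_0$ is a path in $W$ starting at $f(x_1)$, and $f\circ\lambda$ is a loop at $f(x_1)$; applying $f$ to a generator $\eta\ast\delta\ast\eta^{-1}$ of $\pi(\mathcal{U},x_1)$ (with $\delta$ a loop in some $f^{-1}(V')\in\mathcal{U}$) yields $(f\circ\eta)\ast(f\circ\delta)\ast(f\circ\eta)^{-1}$, with $f\circ\delta$ a loop in $V'\in\mathcal{V}$, so $[f\circ\lambda]\in\pi(\mathcal{V},f(x_1))$. By the already-proved well-definedness, $\tilde f([\beta]_H)=[f\circ\beta]_G=[(f\circ\alpha)\ast(f\circ\lambda)\ast(f\circ\gamma_0)]_G\in B_G([f\circ\alpha]_G,\mathcal{V},W)$, as required.

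The only step requiring real thought is the choice of pullback cover $\mathcal{U}=f^{-1}(\mathcal{V})$ in the continuity check; everything else is routine bookkeeping about how the equivalence relation $\sim_H$ and the subgroup $\pi(\mathcal{U},x_1)$ behave under composition with $f$. Naturality (functoriality in $f$) is then immediate from the pointwise formula $[\alpha]_H\mapsto[f\circ\alpha]_G$.
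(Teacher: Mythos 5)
Your proposal is correct and follows exactly the paper's route: define $\tilde f([\alpha]_H)=[f\circ\alpha]_G$ and verify continuity via the pullback cover, showing $\tilde f(B_H([\alpha]_H,f^{-1}(\mathcal{V}),f^{-1}(W)))\subset B_G([f\circ\alpha]_G,\mathcal{V},W)$. The paper states this containment without elaboration; your write-up merely supplies the routine details of well-definedness and the behavior of generators of $\pi(\mathcal{U},x_1)$ under $f$.
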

\proof Put $\tilde f([\alpha]_H)=[f\circ \alpha]_G$ and notice
$$\tilde f(B_H([\alpha]_H,f^{-1}(\mathcal{U}),f^{-1}(V)))\subset
B_G(\tilde f([\alpha]_H),\mathcal{U},V)$$ for any open covering
$\mathcal{U}$ of $Y$ and any neighborhood $V$ of $\alpha(1)$.
\endproof

In connection to \ref{BasicThmOnWidehatX} let us prove the following:

\begin{proposition}\label{DiscreteFibersOfWidetilde}
If $X$ is a path-connected space and $H$ is a subgroup of $\pi_1(X,x_0)$,
then the following conditions are equivalent:
\begin{itemize}
\item[a)] A fiber of the endpoint projection $p_H\colon \widetilde X_H\to X$
has an isolated point,

\item[b)] The endpoint projection $p_H\colon \widetilde X_H\to X$
has discrete fibers,
\item[c)] There is an open covering
$\mathcal{U}$ of $X$ so that $\pi(\mathcal{U},x_0)\subset H$,
\item[d)] $\widetilde X_H$ is a Peano space and $p_H\colon \widetilde X_H\to P(X)$
is a classical covering map.
\end{itemize}
\end{proposition}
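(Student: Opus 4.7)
The plan is to prove the cycle $b)\Rightarrow a)\Rightarrow c)\Rightarrow d)\Rightarrow b)$. The implications $b)\Rightarrow a)$ and $d)\Rightarrow b)$ are immediate (the latter because classical covering maps have discrete fibers). The substantive content is $a)\Rightarrow c)\Rightarrow b)$ and the ``even covering'' portion of $c)\Rightarrow d)$.

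For $a)\Rightarrow c)$, let $[\alpha_0]_H$ be isolated in its fiber over $x_1:=\alpha_0(1)$, and choose a basic neighborhood $B_H([\alpha_0]_H,\mathcal{U},V)$ meeting $p_H^{-1}(x_1)$ only at $[\alpha_0]_H$. I would verify $\pi(\mathcal{U},x_0)\subset H$ generator by generator: given $[\alpha\ast\gamma\ast\alpha^{-1}]$ with $\gamma$ a loop in some $U\in\mathcal{U}$ based at $y$ and $\alpha$ a path from $x_0$ to $y$, the loop $\lambda:=\alpha_0^{-1}\ast\alpha\ast\gamma\ast\alpha^{-1}\ast\alpha_0$ at $x_1$ satisfies $[\lambda]\in\pi(\mathcal{U},x_1)$. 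Taking $\gamma_0$ constant at $x_1$ yields $[\alpha_0\ast\lambda]_H\in B_H([\alpha_0]_H,\mathcal{U},V)\cap p_H^{-1}(x_1)$, so by isolation $[\alpha_0\ast\lambda]_H=[\alpha_0]_H$, i.e.\ $[\alpha\ast\gamma\ast\alpha^{-1}]=[\alpha_0\ast\lambda\ast\alpha_0^{-1}]\in H$. For $c)\Rightarrow b)$, suppose $\pi(\mathcal{U},x_0)\subset H$ and $[\beta]_H\in B_H([\alpha]_H,\mathcal{U},V)$ has $\beta(1)=\alpha(1)$; then the $\gamma_0$ appearing in the definition is a loop in $V\in\mathcal{U}$, so $[\gamma_0],[\lambda]\in\pi(\mathcal{U},\alpha(1))$, and conjugating by $\alpha$ places $[\alpha\ast\lambda\ast\gamma_0\ast\alpha^{-1}]$ in $\pi(\mathcal{U},x_0)\subset H$, whence $[\beta]_H=[\alpha]_H$. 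So every point in every fiber is isolated.

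For $c)\Rightarrow d)$, fix $x_1\in X$, pick $V\in\mathcal{U}$ containing $x_1$, and let $V'$ be the path component of $x_1$ in $V$ (open in $P(X)$). I claim the sets $B_H([\alpha]_H,\mathcal{U},V)$ with $[\alpha]_H\in p_H^{-1}(x_1)$ form a sheet decomposition of $p_H^{-1}(V')$. Coverage: given $[\beta]_H$ with $\beta(1)\in V'$, join $\beta(1)$ to $x_1$ by a path $\gamma$ in $V'$, and observe $[\beta]_H\in B_H([\beta\ast\gamma]_H,\mathcal{U},V)$. Disjointness or equality of distinct sheets follows from the observation following the definition of $B_H$. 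Each sheet projects bijectively onto $V'$: surjectivity is immediate, and for injectivity, if $[\alpha\ast\lambda_i\ast\gamma_i]_H$ ($i=1,2$) share endpoint $x_2$, the loop $\gamma_1\ast\gamma_2^{-1}$ lies in $V\in\mathcal{U}$, so the same conjugation trick gives $[\beta_1\ast\beta_2^{-1}]\in\pi(\mathcal{U},x_0)\subset H$. Continuity of $p_H$ restricted to a sheet is routine. For the inverse $s(x_2):=[\alpha\ast\gamma_0]_H$ (with $\gamma_0$ a path in $V'$ from $x_1$ to $x_2$) I would check continuity: shrinking any basic neighborhood of $s(x_2)$ to one of the form $B_H(s(x_2),\mathcal{U}',V'')$ with $V''\subset V$, its $s$-preimage is the path component of $x_2$ in $V''$, which is open in $V'$. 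This makes $p_H\colon\widetilde X_H\to P(X)$ a classical cover; then $\widetilde X_H$ is locally path-connected, and path-connected via standard lifts of paths in $X$, hence a Peano space.

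The main obstacle is the $c)\Rightarrow d)$ verification that the restriction to a sheet is open, i.e.\ that $s$ is continuous. Conceptually the argument parallels Lemma~\ref{SemiSCLemma}, but one must be careful because the basic topology on $\widetilde X_H$ uses only $V\in\mathcal{U}$ (not arbitrary open sets as on $\widehat X_H$); the saving point is that condition $(c)$ lets one refine inside a sheet using path components of smaller members of any open cover, which matches the basis of $V'$ in $P(X)$.
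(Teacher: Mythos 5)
Your proposal is correct and follows essentially the same route as the paper: the isolation argument conjugating (generators of) $\pi(\mathcal{U},x_0)$ to $\pi(\mathcal{U},x_1)$ for a)$\implies$c), and the sheet decomposition of $p_H^{-1}(V')$ by sets $B_H([\alpha]_H,\mathcal{U},V)$ for c)$\implies$d). You merely supply details (coverage, injectivity on sheets, openness of the restriction, and the direct c)$\implies$b)) that the paper's terse proof leaves to the reader.
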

\proof a)$\implies$c). Suppose $[\alpha]_H\in p_H^{-1}(x_1)$ is isolated. There is an open covering $\mathcal{U}$ of $X$ and $V\in \mathcal{U}$ containing $x_1$
 such that $B_H([\alpha]_H,\mathcal{U},V)\cap p_H^{-1}(x_1)=\{[\alpha]_H\}$.
Given $\gamma$ in $\pi(\mathcal{U},x_0)$, the homotopy class 
$[\alpha^{-1}\ast\gamma\ast\alpha]_H$ belongs to $\pi(\mathcal{U},x_1)$, so
$[\alpha\ast \alpha^{-1}\ast\gamma\ast\alpha]_H=[\gamma\ast\alpha]_H$
belongs to
$B_H([\alpha]_H,\mathcal{U},V)\cap p_H^{-1}(x_1)$.
Hence $[\gamma\ast\alpha]_H=[\alpha]_H$ and
$[\gamma]\in H$.
\par c)$\implies$d).
Suppose there is an open covering
$\mathcal{U}$ of $X$ so that $\pi(\mathcal{U},x_0)\subset H$
and $W$ is a path component of $U\in\mathcal{U}$.
Notice $B_H([\alpha]_H,\mathcal{U},U)$ is mapped by $p_H$
bijectively onto $W$ and that is sufficient for d).

d)$\implies$b) and b)$\implies$a) are obvious.
\endproof

Applying \ref{DiscreteFibersOfWidetilde} to $H$ being trivial one gets
the following (see \cite{Fab} for analogous result in case of
a different topology on the fundamental group):
\begin{corollary}\label{DiscretePiOne}
If $X$ is a path-connected space, then $\pi_1(X,x_0)$
is discrete if and only if $X$ is semilocally simply connected.
\end{corollary}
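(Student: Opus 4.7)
The plan is to deduce this directly from Proposition \ref{DiscreteFibersOfWidetilde} applied to the trivial subgroup $H=\{1\}$. The key observation, already stated in the excerpt, is that $\pi_1(X,x_0)$ is precisely the fiber $p^{-1}(x_0)$ of the endpoint projection $p\colon \widetilde X\to X$ (sitting as a subspace of $\widetilde X$ with the new topology introduced in Section \ref{SECTION: BS topology}). So the topology on $\pi_1(X,x_0)$ that we are asked to analyze is exactly the subspace topology on the fiber.

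First I would argue the easy reduction: $\pi_1(X,x_0)$ being discrete is a homogeneous condition (left multiplication by any element is a self-homeomorphism of the fiber via the natural group action), so $\pi_1(X,x_0)$ is discrete if and only if the identity element is an isolated point of the fiber, which is condition (a) of \ref{DiscreteFibersOfWidetilde} applied with $H=\{1\}$. Actually one does not even need homogeneity, since \ref{DiscreteFibersOfWidetilde} already shows (a)$\iff$(b) for \emph{any} point in \emph{any} fiber.

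Next I would unpack condition (c) of \ref{DiscreteFibersOfWidetilde} in the case $H=\{1\}$. It becomes: there exists an open cover $\mathcal{U}$ of $X$ with $\pi(\mathcal{U},x_0)=\{1\}$. By definition $\pi(\mathcal{U},x_0)$ is generated by classes $[\alpha\ast\gamma\ast\alpha^{-1}]$ where $\gamma$ is a loop in some $U\in\mathcal{U}$ and $\alpha$ a path from $x_0$ to $\gamma(0)$; so $\pi(\mathcal{U},x_0)=\{1\}$ is equivalent to saying that every loop in every element of $\mathcal{U}$ is null-homotopic in $X$. This is precisely the statement that $X$ is semilocally simply connected (with $\mathcal{U}$ being the family of witnessing neighborhoods).

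Combining these two translations, \ref{DiscreteFibersOfWidetilde} with $H=\{1\}$ yields immediately that $\pi_1(X,x_0)$ is discrete iff $X$ is semilocally simply connected. There is no real obstacle; the only thing to double-check is that the natural subspace topology on $\pi_1(X,x_0)\subset\widetilde X$ is indeed the one intended in the statement, and that the cover $\mathcal{U}$ in (c) genuinely matches the usual semilocal-simple-connectedness condition — both are essentially bookkeeping around the definition of $\pi(\mathcal{U},x_0)$.
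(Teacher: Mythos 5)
Your proof is correct and follows exactly the paper's route: the paper's entire proof is ``Applying \ref{DiscreteFibersOfWidetilde} to $H$ being trivial,'' and you have simply spelled out the bookkeeping (the fiber over $x_0$ is $\pi_1(X,x_0)$ with its subspace topology, and $\pi(\mathcal{U},x_0)=\{1\}$ unpacks to semilocal simple connectedness) that the authors leave implicit.
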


\begin{proposition}\label{ComparisonOfBSAndOursForHTotal}
If $\pi(\mathcal{V},x_0)\subset H$ for some open cover
$\mathcal{V}$ of $X$, then the identity function $\widehat X_H\to \widetilde X_H$
is a homeomorphism.
\end{proposition}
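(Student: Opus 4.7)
The plan is to exploit the continuity of the identity $\widehat X_H\to\widetilde X_H$ already noted in the text, so it only remains to show the inverse $\widetilde X_H\to\widehat X_H$ is continuous. Equivalently, given a Bogley--Sieradski basic neighborhood $B_H([\alpha]_H,U)$ and a point $[\beta]_H$ inside it, I must produce a neighborhood of the form $B_H([\beta]_H,\mathcal{U},V)$ in the new topology that is contained in $B_H([\alpha]_H,U)$.

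I would fix a path $\gamma$ in $U$ from $\alpha(1)$ to $\beta(1)$ with $\beta\sim_H\alpha\ast\gamma$ (which exists since $[\beta]_H\in B_H([\alpha]_H,U)$), pick any $V_0\in\mathcal{V}$ containing $\beta(1)$, and let $V:=V_0\cap U$. Then I take $\mathcal{U}:=\mathcal{V}\cup\{V\}$; by construction, $V\in\mathcal{U}$, $\beta(1)\in V\subset U$, and since every loop in $V$ is already a loop in $V_0\in\mathcal{V}$, one still has $\pi(\mathcal{U},x_0)=\pi(\mathcal{V},x_0)\subset H$. This will be the candidate neighborhood.

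The core computation is then: given $[\delta]_H\in B_H([\beta]_H,\mathcal{U},V)$, by definition there exist a path $\gamma_0$ in $V$ from $\beta(1)$ and a loop $\lambda$ at $\beta(1)$ with $[\lambda]\in\pi(\mathcal{U},\beta(1))$ such that $\delta\sim_H\beta\ast\lambda\ast\gamma_0$. Using the standard change-of-basepoint identification of $\pi(\mathcal{V},\cdot)$ via conjugation by $\beta$ (a path from $x_0$ to $\beta(1)$), one gets $[\beta\ast\lambda\ast\beta^{-1}]\in\pi(\mathcal{V},x_0)\subset H$, hence $\beta\ast\lambda\sim_H\beta$. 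Combining this with $\beta\sim_H\alpha\ast\gamma$ gives $\delta\sim_H\alpha\ast(\gamma\ast\gamma_0)$, and since $\gamma\ast\gamma_0$ is a path in $U$ starting at $\alpha(1)$, we conclude $[\delta]_H\in B_H([\alpha]_H,U)$.

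I do not foresee a real obstacle; the only subtle point is remembering that $V$ must itself be an element of $\mathcal{U}$, which forces the small adjustment of enlarging $\mathcal{V}$ by a single intersection rather than using $\mathcal{V}$ unchanged, and verifying that this enlargement does not break the inclusion $\pi(\mathcal{U},x_0)\subset H$. The change-of-basepoint trick for $\pi(\mathcal{V},\cdot)$ is routine once one writes a generator $\lambda=\prod\alpha_i\ast\mu_i\ast\alpha_i^{-1}$ and conjugates by $\beta$.
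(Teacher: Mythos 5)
Your proof is correct and follows essentially the same route as the paper's: both hinge on conjugating the $\pi(\mathcal{U},\beta(1))$-factor $\lambda$ back to the base point to get an element of $\pi(\mathcal{U},x_0)\subset H$ that can be absorbed into the $\sim_H$-class, thereby showing each basic $\widehat X_H$-neighborhood is open in $\widetilde X_H$. The only (cosmetic) difference is your choice of auxiliary cover $\mathcal{U}=\mathcal{V}\cup\{U\cap V_0\}$ where the paper uses the common refinement $\mathcal{W}\cap\mathcal{V}$; both preserve the inclusion $\pi(\mathcal{U},x_0)\subset H$.
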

\proof Let us show $B_H([\alpha]_H,\mathcal{U},W)=B_H([\alpha]_H,W)$
if $\mathcal{U}$ is an open cover of $X$ refining $\mathcal{V}$ and
$W$ is an element of $\mathcal{U}$ containing $\alpha(1)$.
Clearly, $B_H([\alpha]_H,W)\subset B_H([\alpha]_H,\mathcal{U},W)$,
so assume $[\beta]_H\in B_H([\alpha]_H,\mathcal{U},W)$.
There are $h\in H$, $[\lambda]\in\pi(\mathcal{U},\alpha(1))$,
and a path $\gamma$ in $W$ such that $[\beta]=[h\ast\alpha\ast\lambda\ast\gamma]$.
Choose $h_1\in H$ so that $[h_1\ast\alpha]=[\alpha\ast\lambda]$
($h_1=[\alpha\ast \lambda\ast\alpha^{-1}]\in\pi(\mathcal{U},x_0)\subset H$).
Now $[\beta]=[h\ast\alpha\ast\lambda\ast\gamma]=[h\ast h_1\ast\alpha\ast\gamma]$
and $[\beta]_H\in B_H([\alpha]_H,W)$.
\par Now we can show the identity function $\widehat X_H\to \widetilde X_H$
is open: given an open cover $\mathcal{W}$ of $X$ and given
a path $\alpha$ from $x_0$ to $x_1$ pick an element $W$ of $\mathcal{U}=\mathcal{W}\cap\mathcal{V}$ containing $x_1$ and notice $B_H([\alpha]_H,\mathcal{U},W)\subset B_H([\alpha]_H,W)$.
\endproof

\begin{lemma}\label{ProjectionIsOpenLemma}
If $G\subset H$ are subgroups of $\pi_1(X,x_0)$, then the projection $p\colon \widetilde X_G\to \widetilde X_H$ is open.
\end{lemma}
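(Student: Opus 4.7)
The plan is to show that $p$ carries each basis open set of $\widetilde X_G$ onto a basis open set of $\widetilde X_H$. Specifically, I will prove
$$p\bigl(B_G([\alpha]_G,\mathcal{U},V)\bigr) = B_H([\alpha]_H,\mathcal{U},V)$$
for every admissible triple $(\alpha,\mathcal{U},V)$. Since $p$ sends $[\alpha]_G$ to $[\alpha]_H$ and the right-hand side is a basic neighborhood of $[\alpha]_H$ in $\widetilde X_H$, this equality immediately yields openness of $p$.

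For the inclusion $\subseteq$, take $[\beta]_G \in B_G([\alpha]_G,\mathcal{U},V)$. By definition there exist a path $\gamma_0$ in $V$ from $\alpha(1)$ and a loop $\lambda$ at $\alpha(1)$ with $[\lambda] \in \pi(\mathcal{U},\alpha(1))$ such that $\beta \sim_G \alpha\ast\lambda\ast\gamma_0$. Because $G\subset H$, the relation $\sim_G$ refines $\sim_H$, so $\beta \sim_H \alpha\ast\lambda\ast\gamma_0$, which gives $p([\beta]_G) = [\beta]_H \in B_H([\alpha]_H,\mathcal{U},V)$.

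For $\supseteq$, suppose $[\beta]_H \in B_H([\alpha]_H,\mathcal{U},V)$, so $\beta \sim_H \alpha\ast\lambda\ast\gamma_0$ for some appropriate $\lambda$ and $\gamma_0$. Rather than attempting to lift $\beta$ itself, set $\beta' = \alpha\ast\lambda\ast\gamma_0$. Then $[\beta']_G \in B_G([\alpha]_G,\mathcal{U},V)$ trivially (using the same $\lambda$ and $\gamma_0$, with $\beta' \sim_G \alpha\ast\lambda\ast\gamma_0$ by reflexivity), and $p([\beta']_G) = [\beta']_H = [\beta]_H$. Hence $[\beta]_H$ lies in the image, completing the claim.

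There is no real obstacle here; the only thing to be careful about is that in the $\supseteq$ step one should not try to lift the given $\beta$ (which may lie in a different $\sim_G$-class than $\alpha\ast\lambda\ast\gamma_0$), but rather exhibit an explicit preimage using the canonical representative $\alpha\ast\lambda\ast\gamma_0$ produced by membership in $B_H([\alpha]_H,\mathcal{U},V)$. Once the equality of image sets is established, openness of $p$ follows because preimages under the basis are carried to basis elements, and openness is preserved under arbitrary unions.
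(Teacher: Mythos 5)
Your proof is correct and follows essentially the same route as the paper's: both establish $p(B_G([\alpha]_G,\mathcal{U},V))=B_H([\alpha]_H,\mathcal{U},V)$, with the reverse inclusion handled by exhibiting the explicit preimage $[\alpha\ast\lambda\ast\gamma_0]_G$ rather than trying to lift $\beta$ itself. The only quibble is the phrase ``preimages under the basis are carried to basis elements'' in your closing remark, which should read that images of basis elements are basis elements; the argument itself is sound.
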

\proof It suffices to show
$p(B_G([\alpha]_G,\mathcal{U},V))=B_H([\alpha]_H,\mathcal{U},V)$.
Clearly,
\par\noindent
$p(B_G([\alpha]_G,\mathcal{U},V))\subset B_H([\alpha]_H,\mathcal{U},V)$,
so suppose $[\beta]_H\in B_H([\alpha]_H,\mathcal{U},V)$
and $[\beta]=[h\ast \alpha\ast \lambda\ast\gamma]$,
where $[\lambda]\in\pi(\mathcal{U},\alpha(1))$ and $\gamma$
is a path in $V$ originating at $\beta(1)$. Observe
$[\beta]_H=[\alpha\ast\lambda\ast\gamma]_H=p([\alpha\ast\lambda\ast\gamma]_G)$.
\endproof

We arrived at the fundamental result for the new topology on $\widetilde X_H$:

\begin{theorem}\label{TopActionTheorem}
Suppose $G\subset H$ are subgroups of $\pi_1(X,x_0)$.
If $G$ is normal in $\pi_1(X,x_0)$, then $H/G$, identified with the fiber $p^{-1}([\tilde x_0]_H)$
of the projection $p\colon \widetilde X_G\to \widetilde X_H$,
is a topological group and
acts continuously on $\widetilde X_G$
so that
\begin{itemize}
\item[a)] The natural map $(H/G)\times \widetilde X_G\to  \widetilde X_G\times \widetilde X_G$ defined by $([\alpha]_G,[\beta]_G)\mapsto ([\alpha*\beta]_G,[\beta]_G)$ is an embedding,
\item[b)] The quotient map from $\widetilde X_G$ to the orbit space corresponds
to the projection $p\colon\widetilde X_G\to \widetilde X_H$.
\end{itemize}
\end{theorem}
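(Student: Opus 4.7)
The plan is to proceed in four stages: identify $H/G$ as a set with topology with the fiber $p^{-1}([\widetilde x_0]_H)$; establish continuity of the proposed action (and hence the topological-group structure on $H/G$); verify the embedding property (a); and deduce (b) from openness of $p$.

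First, since $G\triangleleft\pi_1(X,x_0)$ we have $G\triangleleft H$, so $H/G$ is a group. The fiber $p^{-1}([\widetilde x_0]_H)$ consists of classes $[\lambda]_G$ with $\lambda$ a loop at $x_0$ and $[\lambda]\in H$, and the assignment $[\lambda]G\mapsto[\lambda]_G$ is a bijection, to which I would assign the subspace topology from $\widetilde X_G$. The proposed action $[\lambda]_G\cdot[\beta]_G:=[\lambda\ast\beta]_G$ is well-defined by the identity
$$[\lambda\ast\beta\ast(\lambda'\ast\beta')^{-1}]=\bigl([\lambda][\beta\ast\beta'^{-1}][\lambda]^{-1}\bigr)\cdot[\lambda\ast\lambda'^{-1}],$$
whose right-hand factors both lie in $G$ by normality.

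To prove continuity of the action at $([\lambda]_G,[\beta]_G)$ I would fix a basic neighborhood $B_G([\lambda\ast\beta]_G,\mathcal{U},V)$ with $V\in\mathcal{U}$ a neighborhood of $\beta(1)$, refine $\mathcal{U}$ so some $W\in\mathcal{U}$ contains $x_0$, and track what a pair in $(B_G([\lambda]_G,\mathcal{U},W)\cap p^{-1}([\widetilde x_0]_H))\times B_G([\beta]_G,\mathcal{U},V)$ looks like. For $[\lambda']_G$ in the first factor the $W$-path from the basis definition must end at $x_0$, hence is a loop in $W\in\mathcal{U}$ and can be absorbed into the $\pi(\mathcal{U},x_0)$-part, giving $\lambda'\sim_G\lambda\ast\mu'$ with $[\mu']\in\pi(\mathcal{U},x_0)$; for $[\beta']_G$ in the second, $\beta'\sim_G\beta\ast\nu\ast\delta$ with $[\nu]\in\pi(\mathcal{U},\beta(1))$ and $\delta$ a path in $V$. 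Then
$$\lambda'\ast\beta'\sim_G\lambda\ast\beta\ast(\beta^{-1}\ast\mu'\ast\beta)\ast\nu\ast\delta,$$
and the conjugation equivariance $[\beta]^{-1}\pi(\mathcal{U},x_0)[\beta]=\pi(\mathcal{U},\beta(1))$ shows $[\lambda'\ast\beta']_G\in B_G([\lambda\ast\beta]_G,\mathcal{U},V)$. Restricting to the fiber yields continuity of multiplication on $H/G$; continuity of inversion uses the parallel identity $\lambda'^{-1}\sim_G\lambda^{-1}\ast(\lambda\ast\mu'^{-1}\ast\lambda^{-1})$ together with normality of $\pi(\mathcal{U},x_0)$ in $\pi_1(X,x_0)$.

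For (a), the map is continuous (from above) and injective, with image the fiber product $E=\{([\gamma]_G,[\beta]_G):\gamma\sim_H\beta\}$. To see its inverse $([\gamma]_G,[\beta]_G)\mapsto([\gamma\ast\beta^{-1}]_G,[\beta]_G)$ is continuous on $E$ I would fix $[\lambda]_G=[\gamma\ast\beta^{-1}]_G$, a basic neighborhood $B_G([\lambda]_G,\mathcal{U},W)$ of it (with $x_0\in W\in\mathcal{U}$), and pick $V\in\mathcal{U}$ containing $\gamma(1)=\beta(1)$. For $([\gamma']_G,[\beta']_G)\in(B_G([\gamma]_G,\mathcal{U},V)\times B_G([\beta]_G,\mathcal{U},V))\cap E$, writing $\gamma'\sim_G\gamma\ast\mu_1\ast\delta_1$ and $\beta'\sim_G\beta\ast\mu_2\ast\delta_2$, the relation $\gamma'\sim_H\beta'$ forces $\delta_1(1)=\delta_2(1)$, so $\delta_1\ast\delta_2^{-1}$ is a loop in $V$ and represents an element of $\pi(\mathcal{U},\beta(1))$; then
$$\gamma'\ast\beta'^{-1}\sim_G(\gamma\ast\beta^{-1})\ast\bigl(\beta\ast\mu_1\ast(\delta_1\ast\delta_2^{-1})\ast\mu_2^{-1}\ast\beta^{-1}\bigr),$$
and conjugation equivariance combined with normality puts $[\gamma'\ast\beta'^{-1}]_G$ in $B_G([\lambda]_G,\mathcal{U},W)$. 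Finally, part (b) is short: the orbit of $[\beta]_G$ under $H/G$ is $\{[\lambda\ast\beta]_G:[\lambda]\in H\}=p^{-1}([\beta]_H)$, so $p$ factors through $\widetilde X_G/(H/G)$ as a continuous bijection $\bar p$, and Lemma \ref{ProjectionIsOpenLemma} gives that $p$ is open while the orbit projection is open (saturations of opens are open because translations are homeomorphisms), so $\bar p$ is a homeomorphism. The main obstacle is the path-bookkeeping in the action-continuity computation; the decisive ingredient that makes each rewriting close up is that $\pi(\mathcal{U},\cdot)$ is normal in every $\pi_1(X,x)$ and transforms equivariantly under conjugation by paths.
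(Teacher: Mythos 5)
Your proof is correct and follows essentially the same route as the paper: identify the fiber with $H/G$, define the action by concatenation, check well-definedness via normality of $G$, and control the basic neighborhoods $B_G(\cdot,\mathcal{U},V)$ using the conjugation equivariance of $\pi(\mathcal{U},\cdot)$, finally deducing (b) from the openness of $p$ (Lemma \ref{ProjectionIsOpenLemma}). The only difference is organizational: the paper extracts the topological-group structure, the continuity of the action, and the embedding property all from one computation for $(x,y)\mapsto(\mu(x^{-1},y),y)$, whereas you verify the continuity of the action and of the inverse of the graph map in two separate (but equivalent) calculations.
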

\proof The fiber $F$
of the projection $p\colon \widetilde X_G\to \widetilde X_H$
is the set of classes $[\alpha]_G$ such that $[\alpha]\in H$, so it corresponds
to $H/G$. Define $\mu\colon F\times \widetilde X_G\to \widetilde X_G$ as follows: given $[\alpha]_G\in F$ and given $[\beta]_G\in \widetilde X_G$
put $\mu([\alpha]_G,[\beta]_G)= [\alpha\ast\beta]_G$.
To see $\mu$ is well defined assume $[\gamma_1], [\gamma_2]\in G$.
Now $[\gamma_1\ast \alpha\ast\gamma_2\ast\beta]_G[(\alpha\ast\gamma_2\ast\alpha^{-1})\ast (\alpha\ast\beta)]_G=[\alpha\ast\beta]_G$
as $[\alpha\ast\gamma_2\ast\alpha^{-1}]\in G$ due to normality of $G$ in $H$.

\par Suppose $\mathcal{U}$ is an open cover of $X$, $V,V_1\in\mathcal{U}$, and
\begin{enumerate}
\item $[\alpha]_G\in F$, $[\beta]_G\in \widetilde X_G$,
\item $[\alpha_1]_G\in B_G([\alpha]_G,\mathcal{U},V_1)\cap F$,
and $[\beta_1]_G\in B_G([\beta]_G,\mathcal{U},V)$.

\end{enumerate}

Thus $[\alpha_1]=[g_1\ast \alpha\ast \lambda_1]$ for some
$[\lambda_1]\in\pi(\mathcal{U},x_0)$ and $[g_1]\in G$.
Similarly, $[\beta_1]= [g_2\ast \beta\ast\lambda_2\ast\gamma]$,
where $[g_2]\in G$, $[\lambda_2]\in\pi(\mathcal{U},\beta(1))$,
and $\gamma$ is a path in $V$.
Now,
$$[\alpha_1^{-1}\ast\beta_1]_G=[\lambda_1^{-1}\ast \alpha^{-1}\ast g_1^{-1}\ast g_2\ast\beta\ast\lambda_2\ast\gamma]_G=$$

$$[(\lambda_1^{-1}\ast \alpha^{-1}\ast g_1^{-1}\ast g_2\ast\alpha\ast\lambda_1)\ast\lambda_1^{-1}\ast \alpha^{-1}\ast\beta\ast\lambda_2\ast\gamma]_G=$$

$$[\lambda_1^{-1}\ast \alpha^{-1}\ast\beta\ast\lambda_2\ast\gamma]_G=[(\alpha^{-1}\ast\beta)\ast (\beta^{-1}\ast\alpha\ast \lambda_1^{-1}\ast \alpha^{-1}\ast\beta)\ast\lambda_2\ast\gamma]_G \in
B_G([\alpha^{-1}\ast\beta]_G,\mathcal{U},V)$$
 as $[\lambda_1^{-1}\ast \alpha^{-1}\ast g_1^{-1}\ast g_2\ast\alpha\ast\lambda_1]\in G$
 and
$[\beta^{-1}\ast\alpha\ast \lambda_1^{-1}\ast \alpha^{-1}\ast\beta]\in \pi(\mathcal{U},(\alpha^{-1}\ast\beta)(1))$.
\par The above calculations amount to
$$\rho((F\cap B_G(x,\mathcal{U},V_1))\times B_G(y,\mathcal{U},V))\subset B_G(\rho(x,y),\mathcal{U},V),$$
where $\rho(x,y):=\mu(x^{-1}, y)$,
 which implies the following

 \begin{enumerate}
\item $F$ is a topological group,
\item $\mu$  is continuous,
\item  $(x,y)\to (\mu(x^{-1},y),y)$
from $F\times \widetilde X_G$ onto its image is open.
\end{enumerate}
 As the map in (3) is injective,
it is an embedding. Hence $(x,y)\to (\mu(x,y),y)$ is an embedding.
\par To see b) use \ref{ProjectionIsOpenLemma} or check it directly.
\endproof

\section{Path lifting}\label{SECTION: PathLifting}

\begin{definition}\label{PathLiftingDef}
A pointed map $f\colon (X,x_0)\to (Y,y_0)$ has the {\bf path lifting property}
if any path $\alpha\colon (I,0)\to (Y,y_0)$ has a lift $\beta\colon (I,0)\to (X,x_0)$.
\par A surjective map $f\colon X\to Y$ has the {\bf path lifting property}
if for any path $\alpha\colon I\to Y$ and any $y_0\in f^{-1}(\alpha(0))$ there is a lift $\beta\colon I\to X$ of $\alpha$ such that $\beta(0)=y_0$.
\end{definition}

\begin{definition}\label{UniquenessOfPathLiftsDef}
A pointed map $f\colon (X,x_0)\to (Y,y_0)$ has the {\bf uniqueness of path lifts property}
if any two paths $\alpha,\beta\colon (I,0)\to (X,x_0)$ are equal if $f\circ\alpha=f\circ \beta$.
\par A pointed map $f\colon (X,x_0)\to (Y,y_0)$ has the {\bf unique path lifting property}
if it has both the path lifting property and the uniqueness of path lifts property.
\par A map $f\colon X\to Y$ has the {\bf uniqueness of path lifts property}
if any two paths $\alpha,\beta\colon I\to X$ are equal if $f\circ\alpha=f\circ \beta$
and $\alpha(0)=\beta(0)$.
\par  A surjective map $f\colon X\to Y$ has the {\bf unique path lifting property}
if it has both the path lifting property and the uniqueness of path lifts property.
\end{definition}

\begin{corollary}\label{UniquePathLiftingForHandG}
Supppose $G\subset H$ are subgroups of $\pi_1(X,x_0)$.
If $G$ is normal in $\pi_1(X,x_0)$, then the following conditions
are equivalent:
\begin{itemize}
\item[a)]  The natural map $\widetilde X_G\to \widetilde X_H$
has the uniqueness of path lifts property,
\item[b)] $\pi_0(H/G)=H/G$, i.e. $H/G$ has trivial path components.
\end{itemize}
\end{corollary}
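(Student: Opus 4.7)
My plan is to use the free continuous action of the topological group $H/G$ on $\widetilde X_G$ that is established in Theorem \ref{TopActionTheorem}. Writing $p\colon \widetilde X_G\to \widetilde X_H$ for the projection and $\mu$ for the action, part (a) of that theorem asserts that $\phi(h,x):=(h\cdot x,x)$ is a homeomorphism from $(H/G)\times\widetilde X_G$ onto its image, while part (b) identifies that image with the fiber-product relation $\{(y,x)\in \widetilde X_G\times\widetilde X_G : p(y)=p(x)\}$. In particular the action is free (it has a continuous ``division'' map), and this will let me convert uniqueness-of-path-lifts questions on $\widetilde X_G$ into triviality-of-paths questions on $H/G$.

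For (b)$\Rightarrow$(a): given $\alpha,\beta\colon I\to\widetilde X_G$ with $p\circ\alpha=p\circ\beta$ and $\alpha(0)=\beta(0)$, I would feed the pair $t\mapsto(\beta(t),\alpha(t))$ through $\phi^{-1}$ to produce a continuous $h\colon I\to H/G$ with $h(t)\cdot\alpha(t)=\beta(t)$ for all $t\in I$. Freeness forces $h(0)=e$ from $\alpha(0)=\beta(0)$, and the hypothesis $\pi_0(H/G)=H/G$ then pins $h$ to the constant $e$, so $\beta=\alpha$.

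For (a)$\Rightarrow$(b): I would start with an arbitrary path $h\colon I\to H/G$ satisfying $h(0)=e$ and set $\beta(t):=h(t)\cdot\widetilde x_0$, which is a path in $\widetilde X_G$ by continuity of $\mu$. Because $H/G$-orbits coincide with fibers of $p$, the path $\beta$ and the constant path at $\widetilde x_0$ share the same $p$-image and the same initial point, so uniqueness of path lifts yields $\beta\equiv\widetilde x_0$. In particular $h(1)\cdot\widetilde x_0=\widetilde x_0$, whence $h(1)=e$ by freeness. Since left translation on the topological group $H/G$ is a self-homeomorphism, triviality of the path component of $e$ propagates to every path component, giving $\pi_0(H/G)=H/G$.

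The one real content is extracting the continuous ``difference function'' $h$ from the embedding supplied by Theorem \ref{TopActionTheorem}(a); once the image of $\phi$ is correctly identified with $\{(y,x):p(y)=p(x)\}$, both directions are formal. I expect this identification to be the only step worth spelling out explicitly, since everything else is then a routine deck-transformation style argument transported to the present context.
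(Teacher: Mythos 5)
Your proposal is correct and follows essentially the same route as the paper: both directions rest on Theorem \ref{TopActionTheorem}, using the embedding $(h,x)\mapsto(h\cdot x,x)$ to extract a continuous difference path $\lambda\colon I\to H/G$ with $\lambda(t)\cdot\alpha(t)=\beta(t)$ for (b)$\Rightarrow$(a), and viewing a path in $H/G\subset\widetilde X_G$ as a lift of the constant path for the converse. The only cosmetic difference is that the paper argues (a)$\Rightarrow$(b) by contrapositive and leaves the translation-homogeneity step implicit, which you spell out.
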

\proof a)$\implies$b). If $H/G$ has a non-trivial path component, then there is a non-trivial lift of the constant path at the base-point of $\widetilde X_H$.
\par b)$\implies$a). Suppose $\alpha$ and $\beta$ are two lifts of
the same path $\gamma$ in $\widetilde X_H$ and $\alpha(0)=\beta(0)$. By
\ref{TopActionTheorem} there is a path
$\lambda$ in $H/G$ with the property $\lambda(t)\cdot \alpha(t)=\beta(t)$
for each $t\in I$. As $\lambda(0)=1\in H/G$ and $H/G$ has trivial path components,
$\lambda(t)=1\in H/G$ for all $t\in I$ and $\alpha=\beta$.
\endproof

\begin{proposition}\label{FibersAreT2Lemma}
Supppose $G\subset H$ are subgroups of $\pi_1(X,x_0)$.
If $G$ is normal in $\pi_1(X,x_0)$, then the following conditions are equivalent:
\begin{itemize}
\item[a)] $H/G$ is a $T_0$-space,
\item[b)] $H/G$ is Hausdorff,

\item[c)] Fibers of the projection $p\colon \widetilde X_G\to \widetilde X_H$ are $T_0$,
\item[d)] Fibers of the projection $p\colon \widetilde X_G\to \widetilde X_H$ are Hausdorff,
\item[e)] For each $h\in H-G$ there is
a cover $\mathcal{U}$ such that $(G\cdot h)\cap \pi(\mathcal{U},x_0)=\emptyset$,
\item[f)] $G$ is closed in $H$.
\end{itemize}
\end{proposition}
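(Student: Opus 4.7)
The plan is to use Theorem~\ref{TopActionTheorem} as the main engine, reducing every condition to a statement about the topological group $H/G$. By part (a) of that theorem, for each fixed $[\beta]_G\in\widetilde X_G$ the map $[\alpha]_G\mapsto [\alpha\ast\beta]_G$ is a homeomorphism from $H/G$ onto $p^{-1}(p([\beta]_G))$, so every fiber of $p$ is homeomorphic to $H/G$. This yields (a)$\Leftrightarrow$(c) and (b)$\Leftrightarrow$(d) immediately.

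The equivalence (a)$\Leftrightarrow$(b) is the standard fact that a topological group is $T_0$ if and only if it is Hausdorff: translations are homeomorphisms, so $T_0$ forces $\{1\}$ to be closed in $H/G$, and then the diagonal in $(H/G)^2$ is the preimage of $\{1\}$ under the continuous map $(x,y)\mapsto xy^{-1}$, hence closed. For (b)$\Leftrightarrow$(f), note that Lemma~\ref{ProjectionIsOpenLemma} says $\widetilde X\to \widetilde X_G$ is open; since $H$ is the full preimage of the fiber $H/G$, the restriction $H\to H/G$ is a continuous open surjection with fibers the $G$-cosets, hence a topological quotient. Therefore $H/G$ is Hausdorff iff $\{1\}$ is closed in $H/G$ iff $G$ is closed in $H$ with its subspace topology from $\widetilde X$.

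What remains is (e)$\Leftrightarrow$(f). The key computation is to identify basic neighborhoods in $\pi_1(X,x_0)$ as a subspace of $\widetilde X$. For a loop class $[h]$ and a cover $\mathcal U$ with $V\in\mathcal U$ containing $x_0$, any $[\beta]\in B_{\{1\}}([h],\mathcal U, V)\cap \pi_1(X,x_0)$ satisfies $\beta\sim h\ast\lambda\ast\gamma_0$ with $[\lambda]\in\pi(\mathcal U,x_0)$ and $\gamma_0$ a path in $V$ starting at $x_0$; since $\beta$ is a loop, $\gamma_0$ is itself a loop in $V\in\mathcal U$, hence $[\gamma_0]\in\pi(\mathcal U,x_0)$ and
\begin{equation*}
B_{\{1\}}([h],\mathcal U, V)\cap \pi_1(X,x_0)=[h]\cdot\pi(\mathcal U,x_0),
\end{equation*}
the converse inclusion being immediate via the constant $\gamma_0$. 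Consequently $G$ is closed in $H$ iff for each $h\in H-G$ there is $\mathcal U$ with $h\cdot\pi(\mathcal U,x_0)\cap G=\emptyset$, equivalently $\pi(\mathcal U,x_0)\cap h^{-1}G=\emptyset$. Normality of $G$ in $\pi_1(X,x_0)$ gives $h^{-1}G=Gh^{-1}$, and since $H-G$ is closed under inversion, this quantified statement is equivalent to (e).

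The main obstacle is the bookkeeping in the final step: correctly identifying the subspace topology on $\pi_1(X,x_0)$ as the one with cosets of $\pi(\mathcal U, x_0)$ as basic open sets, and then invoking normality to convert between right and left cosets so that closedness of $G$ matches the exact form $(G\cdot h)\cap \pi(\mathcal U,x_0)=\emptyset$ in (e). Everything else is a routine consequence of \ref{TopActionTheorem} and standard topological group facts.
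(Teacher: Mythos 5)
Your proof is correct and establishes all six equivalences, but it takes a partly different route from the paper's. The parts that coincide: you get a)$\Leftrightarrow$c) and b)$\Leftrightarrow$d) from Theorem \ref{TopActionTheorem} exactly as the paper does, and your identification of the trace of a basic open set on $\pi_1(X,x_0)$ as the coset $[h]\cdot\pi(\mathcal{U},x_0)$, followed by the inversion trick using normality, is precisely the paper's proof of e)$\Leftrightarrow$f). Where you diverge is in linking the separation axioms to e): the paper proves a)$\Rightarrow$e) directly (using the symmetry $[\beta]_G\in B_G([\alpha]_G,\mathcal{U},V)\Leftrightarrow[\alpha]_G\in B_G([\beta]_G,\mathcal{U},V)$ to separate $G\cdot h$ from $G\cdot 1$) and then e)$\Rightarrow$d) by an explicit computation with $\lambda_1,\lambda_2\in\pi(\mathcal{U},\alpha(1))$ separating two arbitrary points of a fiber. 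You instead exploit the topological-group structure of $H/G$ more systematically: the standard fact that a $T_0$ topological group is Hausdorff gives a)$\Leftrightarrow$b), and the observation that $H\to H/G$ is a continuous open surjection --- Lemma \ref{ProjectionIsOpenLemma} applied to $\widetilde X\to\widetilde X_G$ and restricted to the full preimage $H$ of the fiber --- makes it a quotient map, so b)$\Leftrightarrow$f) reduces to ``$\{1\}$ closed in $H/G$ iff $G$ closed in $H$.'' Your version buys a cleaner logical skeleton, with everything funneled through closedness of the identity coset, at the cost of invoking two general facts (the $T_0$/Hausdorff equivalence for topological groups and the quotient-map argument) that the paper avoids by doing the coset computations by hand. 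Both arguments are sound.
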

\proof In view of \ref{TopActionTheorem}, a)$\equiv$c) and b)$\equiv$d).
\par

a)$\implies$e). Assume $H/G$ is $T_0$ and $h\in H- G$.
Since $[\beta]_G\in B_G([\alpha]_G,\mathcal{U},V)$
is equivalent to $[\alpha]_G\in B_G([\beta]_G,\mathcal{U},V)$,
there is an open cover $\mathcal{U}$ and $V\in\mathcal{U}$ containing $x_0$
 such that $G\cdot h\notin B_G(G\cdot 1,\mathcal{U},V)$.
That means precisely there is no $\lambda\in\pi(\mathcal{U},x_0)$
such that $G\cdot h=G\cdot \lambda$, hence $(G\cdot h)\cap \pi(\mathcal{U},x_0)=\emptyset$.
\par b)$\equiv$d) and a)$\equiv$c) follow from  \ref{TopActionTheorem}.
\par e)$\implies$d).
Suppose $\alpha,\beta$ are two paths in $(X,x_0)$
so that $[\alpha]_H=[\beta]_H$ but $[\alpha]_G\ne [\beta]_G$.
choose $h\in H- G$ satisfying $[h\cdot \alpha]=[\beta]$.
Pick an open cover $\mathcal{U}$ of $X$ satisfying
$G\cdot h\cap \pi(\mathcal{U},x_0)=\emptyset$ and let $V\in\mathcal{U}$ contain $\alpha(1)$.
Suppose $[\gamma]_G\in B_G([\alpha]_G,\mathcal{U},V)\cap B_G([\beta]_G,\mathcal{U},V)$
and $[\gamma]_H=[\alpha]_H$. Let $h_0\in H$ satisfy $[h_0\cdot \alpha]=[\gamma]$.
Choose $\lambda_1,\lambda_2\in\pi(\mathcal{U},\alpha(1))$
such that $G\cdot [h_0\cdot \alpha]=G\cdot \alpha\cdot \lambda_1$ and
$G\cdot [h_0\cdot \alpha]=G\cdot [h\cdot \alpha]\cdot \lambda_2$.
As $G$ is normal in $H$,
$G\cdot h=h\cdot G=G\cdot (\alpha\cdot \lambda_1\cdot\lambda_2^{-1}\alpha^{-1})$,
a contradiction as $\alpha\cdot \lambda_1\cdot\lambda_2^{-1}\cdot \alpha^{-1}\in \pi(\mathcal{U},x_0)$.

b)$\implies$a) is obvious.

e)$\equiv$f). $G$ being closed in $H$ means existence, for each $h\in H- G$, of an open cover $\mathcal{U}$ such that $G\cap B(h,\mathcal{U},V)=\emptyset$
for some $V\in\mathcal{U}$ containing $x_0$.
That, in turn, is equivalent to non-existence of $\lambda\in\pi(\mathcal{U},x_0)$
satisfying $h\cdot\lambda\in G$, i.e. $(G\cdot h^{-1})\cap \pi(\mathcal{U},x_0)=\emptyset$.
\endproof

 \begin{corollary}\label{GroupIsClosed}
 Suppose $G\subset H$ are subgroups of $\pi_1(X,x_0)$.
If $G$ is a normal subgroup of $\pi_1(X,x_0)$, then
the following conditions are equivalent:
\begin{itemize}
\item[a.] $H/G$ has trivial components,
\item[b.] $H/G$ has trivial path components,
\item[c.] $G$ is closed in $H$.
\end{itemize}
 \end{corollary}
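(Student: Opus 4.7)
The plan is to use that $H/G$, viewed via Theorem~\ref{TopActionTheorem} as a topological group sitting inside $\widetilde X_G$ as the fiber over the basepoint $\widetilde x_0$, has a neighborhood basis at the identity consisting of open subgroups; the corollary then reduces to standard topological group theory together with Proposition~\ref{FibersAreT2Lemma}.

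First I would unravel the subspace topology on the fiber $H/G$. A basic open neighborhood of the identity $1 = [c_{x_0}]_G$ has the form $B_G(1,\mathcal{U},V)\cap(H/G)$ for an open cover $\mathcal{U}$ of $X$ and some $V\in\mathcal{U}$ containing $x_0$. Suppose $[\alpha]_G$ lies in this set, so $\alpha\sim_G\lambda\ast\gamma_0$ for some $[\lambda]\in\pi(\mathcal{U},x_0)$ and some path $\gamma_0$ in $V$; if in addition $[\alpha]\in H$, then $\alpha$ is a loop at $x_0$, which forces $\gamma_0$ to be a loop in $V\in\mathcal{U}$, and hence $[\gamma_0]\in\pi(\mathcal{U},x_0)$. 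Conversely, any element of the image of $\pi(\mathcal{U},x_0)\cap H$ in $H/G$ arises by taking $\gamma_0$ constant. Therefore
\[
B_G(1,\mathcal{U},V)\cap (H/G) \;=\; N_{\mathcal{U}},
\]
independent of the choice of $V$, where $N_{\mathcal{U}}$ denotes the image of the subgroup $\pi(\mathcal{U},x_0)\cap H$ under the quotient $H\to H/G$. In particular each $N_{\mathcal{U}}$ is a subgroup of $H/G$, hence open, and therefore clopen.

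Let $H_0$ and $H_{pc}$ denote respectively the connected component and the path component of $1$ in $H/G$. Clopenness of each $N_{\mathcal{U}}$ forces $H_0\subset\bigcap_{\mathcal{U}}N_{\mathcal{U}}$; the latter intersection equals $\overline{\{1\}}$, since in any topological group the closure of the identity is the intersection of all neighborhoods of $1$. On the other hand, $\overline{\{1\}}$ is always indiscrete as a subspace (every neighborhood of $1$ in the ambient group contains all of $\overline{\{1\}}$, then translate), hence path-connected, so $\overline{\{1\}}\subset H_{pc}\subset H_0$. Combining these inclusions yields $H_0 = H_{pc} = \overline{\{1\}}$.

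Since connected components and path components of a topological group are cosets of $H_0$ and $H_{pc}$ respectively, conditions (a) and (b) each amount to $\overline{\{1\}}=\{1\}$, i.e., to $H/G$ being $T_0$. Condition (c) is equivalent to the same statement by Proposition~\ref{FibersAreT2Lemma}. The only real work is the explicit identification of basic neighborhoods of $1$ with the subgroups $N_{\mathcal{U}}$; once that is in hand, the rest of the proof is routine.
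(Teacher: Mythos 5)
Your proof is correct, and for the hard implication (from (c) to (a)) it takes a genuinely different route than the paper. The paper argues in two separate steps: for (b)$\implies$(c) it notes that a topologically indistinguishable pair of points admits a nonconstant path between them (so trivial path components force $H/G$ to be $T_0$, and Proposition \ref{FibersAreT2Lemma} then gives closedness), while for (c)$\implies$(a) it runs a chain argument inside a hypothetical nontrivial connected subset of $H/G$, using a Claim that two overlapping basic neighborhoods force their centers to differ by an element of $G\cdot\pi(\mathcal{U},x_0)$, and then telescopes the chain using normality of $G$. Your computation identifying $B_G(1,\mathcal{U},V)\cap(H/G)$ with the image $N_{\mathcal{U}}$ of $\pi(\mathcal{U},x_0)\cap H$ is essentially the content of that Claim, but by recognizing $N_{\mathcal{U}}$ as an open subgroup you get a clopen neighborhood basis at the identity, and the standard topological-group facts $H_0=H_{pc}=\overline{\{1\}}=\bigcap_{\mathcal{U}}N_{\mathcal{U}}$ replace the chain argument entirely; this is cleaner and makes the equivalence of (a) and (b) with $T_0$-ness simultaneous rather than piecemeal. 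Two small wording points: $N_{\mathcal{U}}$ is open because it is the trace of a basic open set on the fiber, not because it is a subgroup (being a subgroup is what upgrades open to clopen); and in the forward inclusion you should record that $[\lambda\ast\gamma_0]$ lies in $H$ (it equals $g^{-1}[\beta]$ with $g\in G\subset H$ and $[\beta]\in H$), which is what places it in $\pi(\mathcal{U},x_0)\cap H$ rather than merely in $\pi(\mathcal{U},x_0)$. Neither point is a gap.
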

 \proof b)$\implies$c). Suppose $H/G$ has trivial path components. In view of
 \ref{FibersAreT2Lemma} it suffices to show $H/G$ is $T_0$
 to deduce $G$ is closed in $H$.
 If there are two points $u$ and  $v$ of $H/G$ such that any open subset
 of $H/G$ either contains both of them or contains none of them,
 then any function $I\to \{u,v\}\subset H/G$ is continuous. Hence $u=v$
 as $H/G$ does not contain non-trivial paths.
 \par c)$\implies$a). \par
 {\bf Claim.} If $h_1,h_2\in H$ and $G\cdot f\in B_H(G\cdot h_1,\mathcal{U},V)\cap B_H(G\cdot h_{2},\mathcal{U},V)\cap (H/G)$ for some open cover $\mathcal{U}$ of $X$
 and some $V\in\mathcal{U}$ containing $x_0$, then
$G\cdot h_1^{-1}\cdot h_2\subset G\pi(\mathcal{U},x_0)$.
\par {\bf Proof of Claim:}
 $G\cdot f=G\cdot h_1\cdot \lambda_1$ and $G\cdot f=G\cdot h_2\cdot \lambda_2$ for some $\lambda_1,\lambda_2\in\pi(\mathcal{U},x_0)$.
Now $h_1\cdot G=h_2\cdot G\cdot (\lambda_2\cdot \lambda_1^{-1})$
and $(h_1^{-1}\cdot h_2)\cdot G\subset G\cdot (\lambda_1\cdot \lambda_2^{-1})
\subset G\pi(\mathcal{U},x_0)$.
\endproof

Suppose $G$ is closed in $H$ and $h\in H- G$.
 By \ref{FibersAreT2Lemma} there is
a cover $\mathcal{U}$ such that $(G\cdot h)\cap \pi(\mathcal{U},x_0)=\emptyset$.
If there is a connected subset $C$ of $H/G$ containing $G\cdot h_1h$ and $G\cdot h_1$ for some $h_1\in H$, we consider the open cover $\{C\cap B_G(z,\mathcal{U},V)\}_{z\in C}$
of $C$ and define the equivalence relation on $C$ determined by that cover
($z\sim z^\prime$ if  there is a finite chain $z=z_1,\ldots,z_k=z^\prime$ in $C$
such that $B_G(z_i,\mathcal{U},V)\cap B_G(z_{i+1},\mathcal{U},V)\cap C\ne\emptyset$
for all $i<k$). Equivalence classes of that relation are open, hence closed and must equal $C$.
Thus there is a finite chain $h_1,\ldots,h_k=h_1\cdot h$ in $H$
such that $B_G([h_i]_G,\mathcal{U},V)\cap B_G([h_{i+1}]_G,\mathcal{U},V)\cap (H/G)\ne\emptyset$
for all $i<k$. By Claim there are elements $g_i\in G$ ($i < k$) so that
$g_i\cdot h_i^{-1}\cdot h_{i+1}\in \pi(\mathcal{U},x_0)$. By normality of $G$ in $H$
there is $g\in G$ satisfying $g\cdot \prod\limits_{i=1}^{k-1}h_i^{-1}\cdot h_{i+1}
=g\cdot h\in \pi(\mathcal{U},x_0)$, a contradiction.
 \endproof

\begin{theorem}\label{PiOneOfXHAndPathLifting}
If $G$ is a normal subgroup of $\pi_1(X,x_0)$,
then the following conditions are equivalent:
\begin{itemize}
\item[a.] The endpoint projection
$p_G\colon(\widetilde X_G,\widetilde x_0)\to (X,x_0)$ has the unique
path lifting property,
\item[b.] $G$ is closed in $\pi_1(X,x_0)$,
\item[c.]  $\pi_1(p_G)\colon \pi_1(\widetilde X_G,\widetilde x_0)\to\pi_1(X,x_0)$
is a monomorphism and its image equals $G$.
\end{itemize}
\end{theorem}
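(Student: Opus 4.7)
The plan is to derive (a)$\iff$(b) from Corollaries~\ref{UniquePathLiftingForHandG} and~\ref{GroupIsClosed} (specialized to $H=\pi_1(X,x_0)$), and to prove (a)$\iff$(c) directly along the lines of Proposition~\ref{ProjHasUPLP}, with an added square-lift argument for the injectivity of $\pi_1(p_G)$. Note first that $p_G$ always has the path lifting property: standard lifts of paths are continuous into $\widehat X_G$, and the identity $\widehat X_G\to \widetilde X_G$ is continuous, so (a) reduces to the uniqueness of path lifts for $p_G$.

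For (a)$\iff$(b), set $H=\pi_1(X,x_0)$. Since $\pi(\{X\},x_0)=H$, Proposition~\ref{ComparisonOfBSAndOursForHTotal} gives $\widehat X_H=\widetilde X_H$, and Proposition~\ref{CaseHBeingWholeGroup} shows that $p_H\colon \widetilde X_H\to X$ is a continuous bijection whose set-theoretic inverse is given by standard lifts. Hence paths in $X$ correspond bijectively to paths in $\widetilde X_H$, and $p_G$ has uniqueness of path lifts if and only if the projection $\widetilde X_G\to \widetilde X_H$ does. Corollary~\ref{UniquePathLiftingForHandG} then says this is equivalent to $\pi_0(H/G)=H/G$, and Corollary~\ref{GroupIsClosed} converts that into $G$ being closed in $\pi_1(X,x_0)$, which is (b).

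For (a)$\implies$(c), uniqueness forces every loop $\bar\alpha$ at $\widetilde x_0$ in $\widetilde X_G$ to equal the standard lift of $\alpha=p_G\circ\bar\alpha$, and $\bar\alpha(1)=[\alpha]_G=\widetilde x_0$ forces $[\alpha]\in G$; conversely, for every $[\alpha]\in G$ the standard lift of $\alpha$ is a loop at $\widetilde x_0$, so $\operatorname{im}(\pi_1(p_G))=G$. For injectivity, suppose $[\alpha]=1\in\pi_1(X)$, pick a based null-homotopy $F\colon I^2\to X$ of $\alpha$, and define the square lift $\bar F(s,t)=[F_{s,t}]_G$ with $F_{s,t}(u)=F(us,t)$. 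The classical Spanier argument shows $\bar F$ is continuous into $\widehat X_G$, hence into $\widetilde X_G$. The sides $s=0$, $s=1$, $t=1$ all evaluate to $\widetilde x_0$ --- on the side $s=1$ one uses that $F(\cdot,t)\simeq\alpha$ in $\pi_1(X)$, so $[F(\cdot,t)]=1\in G$ --- so $\bar F$ is a based null-homotopy of $\bar\alpha$.

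For (c)$\implies$(a), given a lift $\bar\alpha$ of a path $\alpha$ starting at $\widetilde x_0$, pick a path $\beta$ with $\bar\alpha(1)=[\beta]_G$ and let $\hat\beta$ be its standard lift; then $\bar\alpha*\hat\beta^{-1}$ is a loop at $\widetilde x_0$ whose projection $\alpha*\beta^{-1}$ represents an element of $\operatorname{im}(\pi_1(p_G))=G$, giving $[\alpha]_G=[\beta]_G=\bar\alpha(1)$. Replacing $\alpha$ by $\alpha|_{[0,t]}$ yields $\bar\alpha(t)=[\alpha|_{[0,t]}]_G$ for every $t$, so $\bar\alpha$ is the standard lift of $\alpha$, which gives uniqueness. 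The principal delicate point in the whole argument is the continuity of the square lift $\bar F$ in the coarser new topology on $\widetilde X_G$, and this is handled indirectly by first proving continuity into the finer $\widehat X_G$ and then composing with the continuous identity $\widehat X_G\to \widetilde X_G$.
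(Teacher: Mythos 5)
Your proof is correct and follows essentially the same route as the paper's: (a)$\iff$(b) is obtained from Corollaries \ref{UniquePathLiftingForHandG} and \ref{GroupIsClosed} with $H=\pi_1(X,x_0)$ after identifying $\widetilde X_H$ with $P(X)$, and (a)$\iff$(c) is proved via standard lifts. The only differences are cosmetic: you spell out the square-lift continuity needed for injectivity of $\pi_1(p_G)$ (which the paper asserts in one line), and you prove (c)$\implies$(a) directly in the style of Proposition \ref{ProjHasUPLP} rather than by contradiction through the path components of $H/G$.
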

\proof Put $H=\pi_1(X,x_0)$ and observe $\widetilde X_H$
is the Peanification of $(X,x_0)$ by \ref{CaseHBeingWholeGroup}.

a)$\equiv$b). By \ref{UniquePathLiftingForHandG} the group $H/G$ has trivial path components. Use \ref{GroupIsClosed}.

a)$\implies$c). Given a loop in $(\widetilde X_G,\widetilde x_0)$ we may assume
it is a canonical lift of a loop $\alpha$ in $(X,x_0)$.
For that lift to be a loop we must have $[\alpha]\in G$.
Thus the image of $\pi_1(p_G)\colon \pi_1(\widetilde X_G,\widetilde x_0)\to\pi_1(X,x_0)$
equals $G$ (canonical lifts of elements of $G$ show that the image
contains $G$). If $\alpha$ is null-homotopic in $(X,x_0)$,
then its canonical lift is null-homotopic as well.
Thus $\pi_1(p_G)\colon \pi_1(\widetilde X_G,\widetilde x_0)\to\pi_1(X,x_0)$
is a monomorphism.

c)$\implies$a). If $H/G$ has a non-trivial path component (we use~\ref{UniquePathLiftingForHandG}),
then there is a path from the base-point to a different point $[\alpha]_G$
of $H/G$. Concatenating the canonical lift of $\alpha$ with the reverse of that path gives
a loop in $(\widetilde X_G,\widetilde x_0)$ whose image in $\pi_1(X,x_0)$ is $[\alpha]\notin G$,
a contradiction.
\endproof

\begin{proposition}\label{ClosureOfSubgroups}
Suppose $(X,x_0)$ is a pointed topological space and $H$ is a subgroup of $\pi_1(X,x_0)$.
The closure of $H$ in  $\pi_1(X,x_0)$
consists of all elements $g\in \pi_1(X,x_0)$ such that for each open
cover $\mathcal{U}$ of $X$ there is $h\in H$ and $\lambda\in\pi(\mathcal{U},x_0)$
satisfying $g=h\cdot \lambda$. If $H$ is a normal subgroup of $\pi_1(X,x_0)$,
then so is its closure.
\end{proposition}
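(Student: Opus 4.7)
The plan is to compute explicitly the basic open neighborhoods of a point $g=[\alpha]\in\pi_1(X,x_0)$ in the subspace topology inherited from $\widetilde X$, and then translate ``meeting $H$'' into the claimed algebraic condition. The normality of $\bar H$ will then follow from the fact that $\pi(\mathcal U,x_0)$ is a normal subgroup of $\pi_1(X,x_0)$ for every open cover $\mathcal U$.

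First, I would unpack the definition of $B_{\{1\}}([\alpha],\mathcal U,V)$ intersected with the fiber $\pi_1(X,x_0)=p^{-1}(x_0)$, for $V\in\mathcal U$ with $x_0\in V$. A loop $\beta$ has $[\beta]\in B_{\{1\}}([\alpha],\mathcal U,V)$ precisely when $\beta\sim\alpha*\lambda*\gamma_0$ with $[\lambda]\in\pi(\mathcal U,x_0)$ and $\gamma_0$ a path in $V$ from $x_0$ back to $\beta(1)=x_0$, i.e.\ $\gamma_0$ is a loop in $V$. Since any loop in $V\in\mathcal U$ represents an element of $\pi(\mathcal U,x_0)$, the product $[\lambda*\gamma_0]$ ranges exactly over $\pi(\mathcal U,x_0)$ (conversely, any $\mu\in\pi(\mathcal U,x_0)$ is realized by taking $\lambda=\mu$ and $\gamma_0$ the constant loop). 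Hence
$$B_{\{1\}}([\alpha],\mathcal U,V)\cap\pi_1(X,x_0)=[\alpha]\cdot\pi(\mathcal U,x_0),$$
which in particular is independent of the choice of $V$. So the basic neighborhoods of $g=[\alpha]$ in $\pi_1(X,x_0)$ are exactly the cosets $g\cdot\pi(\mathcal U,x_0)$ as $\mathcal U$ runs over open covers of $X$.

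From this the first claim is immediate: $g\in\bar H$ iff every such coset meets $H$, iff for every open cover $\mathcal U$ there exist $h\in H$ and $\mu\in\pi(\mathcal U,x_0)$ with $g\cdot\mu=h$. Setting $\lambda=\mu^{-1}\in\pi(\mathcal U,x_0)$ gives $g=h\cdot\lambda$, which is the stated characterization (and the reverse direction is equally trivial).

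For the second claim, assume $H$ is normal, pick $g\in\bar H$ and $k\in\pi_1(X,x_0)$, and fix an arbitrary open cover $\mathcal U$ of $X$. Using the characterization on $g$, write $g=h_0\lambda_0$ with $h_0\in H$ and $\lambda_0\in\pi(\mathcal U,x_0)$. Then
$$kgk^{-1}=(kh_0k^{-1})\cdot(k\lambda_0k^{-1}).$$
The first factor lies in $H$ by normality of $H$. The main point of the argument is that $\pi(\mathcal U,x_0)$ is itself a normal subgroup of $\pi_1(X,x_0)$: by definition it is generated by elements $[\alpha*\gamma*\alpha^{-1}]$ with $\gamma$ a loop in some $U\in\mathcal U$ and $\alpha$ a path from $x_0$, and conjugating such a generator by $k=[\eta]$ yields $[(\eta*\alpha)*\gamma*(\eta*\alpha)^{-1}]$, again a generator. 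Hence $k\lambda_0k^{-1}\in\pi(\mathcal U,x_0)$, so $kgk^{-1}$ satisfies the characterization for the cover $\mathcal U$. Since $\mathcal U$ was arbitrary, $kgk^{-1}\in\bar H$, proving normality of $\bar H$.

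The only mildly subtle step is the first one, where one must be careful that the loop condition $\beta(1)=x_0$ forces $\gamma_0$ to be a loop in $V$, and therefore can be absorbed into $\pi(\mathcal U,x_0)$; once that identification is made, everything else is formal.
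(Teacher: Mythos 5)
Your proof is correct and follows essentially the same route as the paper's: identify the basic neighborhoods of $g$ in the fiber with the cosets $g\cdot\pi(\mathcal U,x_0)$, read off the closure condition, and deduce normality from the normality of $\pi(\mathcal U,x_0)$ in $\pi_1(X,x_0)$. The paper's own argument is terser (it only writes out one inclusion and gives the conjugation identity in one line); your version supplies the explicit computation $B([\alpha],\mathcal U,V)\cap\pi_1(X,x_0)=[\alpha]\cdot\pi(\mathcal U,x_0)$ and both directions, which is exactly what the paper leaves implicit.
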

\proof Suppose $g\in \pi_1(X,x_0)$ and for each open
cover $\mathcal{U}$ of $X$ there is $h\in H$ and $\lambda\in\pi(\mathcal{U},x_0)$
satisfying $g=h\cdot \lambda$. Notice $B(g,\mathcal{U})$ contains $h$, so $g$
belongs to the closure of $H$. If $H$ is normal, then
$k\cdot g\cdot k^{-1}=(k\cdot h\cdot k^{-1})\cdot (k\cdot \lambda\cdot k^{-1})$ also belongs to the closure of $H$.
\endproof

\begin{corollary}\label{ExamplesOfClosedSubgroupsOne}
The closure of the trivial subgroup of $\pi_1(X,x_0)$ in  $\pi_1(X,x_0)$ equals
$\bigcap\limits_{\mathcal{U}\in COV}\pi(\mathcal{U},x_0)$, where $COV$ stands
for the family of all open covers of $X$.
\end{corollary}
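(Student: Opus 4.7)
The plan is to deduce this immediately from Proposition \ref{ClosureOfSubgroups} by specializing to $H=\{1\}$, the trivial subgroup of $\pi_1(X,x_0)$. The previous proposition characterizes the closure of a subgroup $H$ as the set of $g$ for which, given any open cover $\mathcal{U}$, one can write $g = h\cdot \lambda$ with $h\in H$ and $\lambda\in \pi(\mathcal{U},x_0)$. When $H$ is trivial, the only available $h$ is the identity, so the factorization condition reduces to $g=\lambda\in\pi(\mathcal{U},x_0)$.

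Concretely, I would first note that the trivial subgroup is vacuously normal, so \ref{ClosureOfSubgroups} applies. Then, for the forward inclusion, if $g$ lies in the closure of $\{1\}$, specialize the factorization to obtain $g\in\pi(\mathcal{U},x_0)$ for each $\mathcal{U}\in COV$, i.e., $g\in\bigcap_{\mathcal{U}\in COV}\pi(\mathcal{U},x_0)$. For the reverse inclusion, if $g$ belongs to every $\pi(\mathcal{U},x_0)$, then in the factorization one simply takes $h=1$ and $\lambda=g$, and \ref{ClosureOfSubgroups} places $g$ in the closure of $\{1\}$.

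There is essentially no obstacle here; the statement is a direct corollary of the characterization of closure, and the entire argument is just the specialization $H=\{1\}$. The only thing to keep track of is the trivial remark that the factorization $g=h\cdot\lambda$ collapses to $g=\lambda$, which is what forces $g$ to lie in each $\pi(\mathcal{U},x_0)$.
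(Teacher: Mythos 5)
Your proposal is correct and matches the paper's intent: the corollary is stated without proof precisely because it is the specialization $H=\{1\}$ of Proposition \ref{ClosureOfSubgroups}, exactly as you argue. (One minor note: the normality hypothesis in \ref{ClosureOfSubgroups} is only needed for the claim that the closure of a normal subgroup is normal; the characterization of the closure holds for arbitrary $H$, so your remark about vacuous normality is harmless but not needed.)
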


\begin{example}\label{HarmonicArchipelago}
The Harmonic Archipelago $HA$ of Bogley and Sieradski \cite{BogSie}
is a Peano space such that $\pi_1(X,x_0)$ equals
$\bigcap\limits_{\mathcal{U}\in COV}\pi(\mathcal{U},x_0)$. Hence $\pi_1(X,x_0)$
is the only closed subgroup of $\pi_1(X,x_0)$.
$HA$ is built by stretching disks $B(2^{-n},2^{-n-2})$ to form cones
over its boundary with the vertices at height $1$ in the $3$-space.
\end{example}

\begin{corollary}\label{ExamplesOfClosedSubgroupsTwo}
Suppose $(X,x_0)$ is a pointed topological space.
The following subgroups of $\pi_1(X,x_0)$ are closed:
\begin{itemize}
\item[a)] Subgroups $H$ containing $\pi(\mathcal{U},x_0)$ for some open cover $\mathcal{U}$ of $X$,
\item[b)] $\bigcap\limits_{\mathcal{U}\in S}\pi(\mathcal{U},x_0)$
for any family $S$ of open covers of $X$,
\item[c)] The kernel of $\pi_1(f)\colon \pi_1(X,x_0)\to\pi_1(Y,y_0)$
for any map $f\colon (X,x_0)\to (Y,y_0)$ to a pointed semilocally simply connected space.
\item[d)] The kernel of the natural homomorphism $\pi_1(X,x_0)\to\check\pi_1(X,x_0)$
from the fundamental group to the \v Cech fundamental group.
\end{itemize}
\end{corollary}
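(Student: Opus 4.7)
The plan is to derive all four parts from Proposition~\ref{ClosureOfSubgroups}, which characterizes the closure of a subgroup $H\subset\pi_1(X,x_0)$ as the set of $g$ admitting, for every open cover $\mathcal{U}$ of $X$, a factorization $g=h\cdot\lambda$ with $h\in H$ and $\lambda\in\pi(\mathcal{U},x_0)$. Part~(a) then follows immediately: if $\pi(\mathcal{U}_0,x_0)\subset H$ for some specific cover $\mathcal{U}_0$, then applying the criterion with $\mathcal{U}=\mathcal{U}_0$ forces $g=h\lambda\in H$. Part~(b) reduces to (a), because each $\pi(\mathcal{U},x_0)$ trivially contains itself and is therefore closed by~(a), while an arbitrary intersection of closed subsets of the topological space $\pi_1(X,x_0)$ is closed.

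For part~(c), the strategy is to produce an open cover $\mathcal{U}$ of $X$ with $\pi(\mathcal{U},x_0)\subset\ker\pi_1(f)$, after which part~(a) finishes the argument. I would fix an open cover $\mathcal{V}$ of $Y$ witnessing the semilocal simple connectivity of $Y$, so that every loop in any $V\in\mathcal{V}$ is null-homotopic in $Y$, and take $\mathcal{U}=\{f^{-1}(V):V\in\mathcal{V}\}$. A generator $[\alpha\ast\gamma\ast\alpha^{-1}]$ of $\pi(\mathcal{U},x_0)$, with $\gamma$ a loop in $f^{-1}(V)$ for some $V\in\mathcal{V}$ and $\alpha$ a path from $x_0$ to $\gamma(0)$, is sent by $\pi_1(f)$ to $[(f\circ\alpha)\ast(f\circ\gamma)\ast(f\circ\alpha)^{-1}]$; since $f\circ\gamma$ is a loop in $V$, hence null-homotopic in $Y$, the whole class vanishes in $\pi_1(Y,y_0)$, as required.

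For part~(d), the kernel of the natural homomorphism $\pi_1(X,x_0)\to\check\pi_1(X,x_0)$ equals the intersection, taken over all open covers $\mathcal{U}$ of $X$, of the kernels of the nerve-induced homomorphisms $\pi_1(X,x_0)\to\pi_1(N(\mathcal{U}),x_0)$. Each nerve $N(\mathcal{U})$ is a simplicial complex, hence semilocally simply connected, so (c) makes each such kernel closed, and the intersection argument from (b) finishes the proof. The only step with real content is the verification in (c) that $\mathcal{U}=f^{-1}(\mathcal{V})$ does what is needed; once the conjugation $\alpha\ast\gamma\ast\alpha^{-1}$ is tracked through $f$ and semilocal simple connectivity is invoked on $f\circ\gamma$, the rest is formal.
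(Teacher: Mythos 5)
Your proof is correct, and the overall skeleton (prove (a), deduce (b) by intersecting, reduce (c) to a cover of $X$, reduce (d) to (c) plus an intersection) matches the paper's, but your arguments for (a) and (c) are genuinely different. For (a) the paper observes that a subgroup containing some $\pi(\mathcal{U},x_0)$ is \emph{open} in $\pi_1(X,x_0)$ and that open subgroups of topological groups are closed; you instead invoke the explicit description of the closure from Proposition~\ref{ClosureOfSubgroups} and specialize the cover to $\mathcal{U}_0$ --- both work, the paper's is slicker, yours is more computational and does not need the coset argument. For (c) the paper uses the functoriality of the new topology (\ref{FunctorialityOfWidetilde}) together with \ref{DiscretePiOne} to see that $\pi_1(f)$ is a continuous homomorphism into a \emph{discrete} group, so its kernel is the preimage of a closed set; you bypass that machinery entirely by pulling back a cover $\mathcal{V}$ of $Y$ witnessing semilocal simple connectivity and checking that the generators $[\alpha\ast\gamma\ast\alpha^{-1}]$ of $\pi(f^{-1}(\mathcal{V}),x_0)$ die under $\pi_1(f)$, which reduces (c) to (a). Your route is more elementary and self-contained; the paper's is shorter given the topological-group results already established. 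One small caution on (d): you describe $\check\pi_1$ via nerve-induced homomorphisms $\pi_1(X,x_0)\to\pi_1(N(\mathcal{U}),x_0)$, which presupposes canonical maps to nerves (available for paracompact spaces via partitions of unity); for an arbitrary pointed space the safe formulation is the paper's, namely an inverse system of simplicial complexes $K_s$ equipped with maps $f_s\colon X\to K_s$, so that the kernel is $\bigcap_s\ker\pi_1(f_s)$ --- with that phrasing your argument goes through verbatim.
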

\proof a) Any subgroup containing $\pi(\mathcal{U},x_0)$ is open. Any open subgroup of a topological group is closed.
\par b) easily follows from a).
\par c) follows from \ref{DiscretePiOne} and \ref{FunctorialityOfWidetilde}
as $\pi_1(f)\colon \pi_1(X,x_0)\to\pi_1(Y,y_0)$ is continuous and
$\pi_1(Y,y_0)$ is discrete.
\par d) follows from c). Indeed $\check\pi_1(X,x_0)$ is defined (see \cite{DydSeg}
or \cite{MarSeg}) as the inverse limit of an inverse system $\{\pi_1(K_s,k_s)\}_{s\in S}$,
where each $K_s$ is a simplicial complex and there are maps
$f_s\colon (X,x_0)\to (K_s,k_s)$ so that for $t > s$ the map $f_s$ is homotopic
to the composition of $f_t$ and the bonding map $(K_t,k_t)\to (K_s,k_s)$.
That means the kernel of the natural homomorphism $\pi_1(X,x_0)\to\check\pi_1(X,x_0)$
is the intersection of kernels of all $\pi_1(f_s)$, $s\in S$.
\endproof

The concept of a space $X$ being {\bf homotopically Hausdorff} was introduced
by Conner and Lamoreaux \cite[Definition 1.1]{ConLam} 
to mean that for any point $x_0$ in $X$ and for any non-homotopically trivial
loop $\gamma$ at $x_0$ there is a neighborhood $U$ of $x_0$ in $X$
with the property that no loop in $U$ is homotopic to $\gamma$ rel.$x_0$
in $X$. Subsequently, Fischer and Zastrow \cite{FisZas}) defined a space $X$ to be {\bf homotopically Hausdorff relative to
a subgroup $H$ of $\pi_1(X,x_0)$} if for any $g\notin H$
and for any path $\alpha$ originating at $x_0$ there is an open neighborhood $U$
of $\alpha(1)$ in $X$ such that no element of $H\cdot g$ can be expressed as $[\alpha\ast\gamma\ast\alpha^{-1}]$ for some loop $\gamma$ in $(U,\alpha(1))$.
We generalize this definition as follows:
\begin{definition}\label{HGHausdorffDef}
Suppose $G\subset H$ are subgroups of $\pi_1(X,x_0)$.
$X$ is {\bf $(H,G)$-homotopically Hausdorff}
if for any $h\in H\setminus G$ and any path $\alpha$
originating at $x_0$ there is an open neighborhood $U$
of $\alpha(1)$ in $X$ such that none of the elements of $G\cdot h$ can be expressed as $[\alpha\ast\gamma\ast\alpha^{-1}]$ for any loop $\gamma$ in $(U,\alpha(1))$.
\end{definition}

Notice $X$ being homotopically Hausdorff relative to $H$ corresponds
to $X$ being $(\pi_1(X,x_0),H)$-homotopically Hausdorff.

Let us characterize the concept of being $(H,G)$-homotopically Hausdorff
in terms of the basic topology on the fundamental group.

\begin{proposition}\label{HGHausdorffAndBasicTopology}
If $G\subset H$ are subgroups of $\pi_1(X,x_0)$,
then $X$ is $(H,G)$-homotopically Hausdorff if and only if for every path
$\alpha$ in $X$ that terminates at $x_0$ the group
$h_\alpha(G)$ is closed in $h_\alpha(H)$ in the basic topology.
\end{proposition}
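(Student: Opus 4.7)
The plan is to unfold both sides of the biconditional into concrete statements involving cosets of $G$ and Spanier subgroups, and then verify the two directions. First, I would unwind the basic topology: a typical neighborhood of $k\in\pi_1(X,y)$ is $k\cdot\pi(\mathcal{U},y)$ as $\mathcal{U}$ ranges over open covers of $X$. Since the change-of-basepoint isomorphism $h_\alpha$ sends $\pi(\mathcal{U},\cdot)$ to $\pi(\mathcal{U},\cdot)$ at the other basepoint, it is a homeomorphism of basic topologies, so the condition ``$h_\alpha(G)$ is closed in $h_\alpha(H)$'' unpacks---after the coset symmetries afforded by $G=G^{-1}$ and symmetry of the Spanier subgroups---into: for each $h\in H\setminus G$ there is an open cover $\mathcal{U}$ of $X$ with $Gh\cap\pi(\mathcal{U},x_0)=\emptyset$. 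Meanwhile, $(H,G)$-homotopic Hausdorffness unpacks into: for each $h\in H\setminus G$ and each path $\alpha$ originating at $x_0$ there is an open neighborhood $U$ of $\alpha(1)$ such that no $g\in G$ satisfies $gh=[\alpha\ast\gamma\ast\alpha^{-1}]$ for any loop $\gamma$ in $(U,\alpha(1))$.

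The forward direction is short. Given a cover $\mathcal{U}$ witnessing closedness at a given $h$, pick $U\in\mathcal{U}$ with $\alpha(1)\in U$; any loop $\gamma$ in $(U,\alpha(1))$ gives a standard generator $[\alpha\ast\gamma\ast\alpha^{-1}]$ of $\pi(\mathcal{U},x_0)$, which therefore misses $Gh$. So $U$ serves as the required hH neighborhood for the path $\alpha$.

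For the reverse direction I would build the cover directly from the hH data: fix $h\in H\setminus G$, and for each path $\alpha$ originating at $x_0$ let $U_\alpha$ be the open neighborhood of $\alpha(1)$ furnished by hH; put $\mathcal{U}=\{U_\alpha:\alpha\}$, completing by any open cover of the complement of the path-component of $x_0$ if necessary. The substantive claim is then that $Gh\cap\pi(\mathcal{U},x_0)=\emptyset$. This is where the main obstacle lies: a typical element of $\pi(\mathcal{U},x_0)$ is a product of generators $\prod_i[\beta_i\ast\gamma_i\ast\beta_i^{-1}]$, while hH individually controls only single generators through their specific base paths. I expect the resolution to exploit the universal quantifier over $\alpha$ in the hH hypothesis: such a product condenses into a single conjugate $[\beta_1\ast\mu\ast\beta_1^{-1}]$ where $\mu$ is a loop at $\beta_1(1)$ built by interleaving the $\gamma_i$ with the connecting segments $\beta_i^{-1}\ast\beta_{i+1}$, and $\mu$ itself lies in $\pi(\mathcal{U},\beta_1(1))$. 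One then applies hH at the path $\beta_1$ together with an inductive reduction in the factor length---each reduction again producing a conjugate of a shorter-product loop based at some $\beta_i(1)$---to conclude that the full product cannot lie in $Gh$. Carrying out this path-composition bookkeeping in a way that genuinely absorbs the ``products vs.\ single generators'' gap is the technical heart of the proof.
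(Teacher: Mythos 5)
There is a genuine gap, and it begins at your very first step: you unpack ``the basic topology'' as the cover-based topology whose neighborhoods of $k$ are the cosets $k\cdot\pi(\mathcal{U},x_0)$. That is the \emph{new} topology of Section 3, not the basic topology. In the basic topology a neighborhood of a class $[\lambda]\in\pi_1(X,x_1)$, viewed in the fiber of $\widehat X$ over $x_1$, is $B([\lambda],U)\cap\pi_1(X,x_1)$, which consists of the classes $[\lambda\ast\gamma]$ with $\gamma$ a loop in a \emph{single} open set $U\ni x_1$; no products over an open cover occur. This misreading has two consequences. First, your reformulated right-hand side ($Gh\cap\pi(\mathcal{U},x_0)=\emptyset$ for some cover $\mathcal{U}$) no longer depends on $\alpha$, whereas the statement quantifies over all paths $\alpha$ terminating at $x_0$ precisely because $h_\alpha$ is \emph{not} a homeomorphism for the basic topology: conjugating a small loop at $\alpha(0)$ by $\alpha$ does not produce a small loop at $x_0$. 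Second, the condition you end up trying to prove equivalent to $(H,G)$-homotopic Hausdorffness is the paper's notion of $G$ being closed in $H$ in the new topology, which the paper treats as strictly stronger (Lemma \ref{ClosedImpliesHH} establishes only the one implication). Accordingly, the ``technical heart'' you flag in your reverse direction is not a bookkeeping issue but an actual obstruction: a product $\prod_i[\beta_i\ast\gamma_i\ast\beta_i^{-1}]$ of Spanier generators cannot be condensed into a single conjugate $[\beta_1\ast\mu\ast\beta_1^{-1}]$ with $\mu$ a loop lying inside one element of the cover, and homotopic Hausdorffness gives no control over such products.

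The intended argument is a direct definitional unwinding involving no covers at all: $h_\alpha(G)$ fails to be closed in $h_\alpha(H)$ exactly when some $h\in H\setminus G$ has the property that every neighborhood $U$ of $x_1=\alpha(0)$ admits a loop $\gamma$ in $(U,x_1)$ and $g\in G$ with $[\alpha\ast h\ast\alpha^{-1}\ast\gamma^{-1}]=[\alpha\ast g\ast\alpha^{-1}]$, i.e.\ with $[\alpha^{-1}\ast\gamma\ast\alpha]\in G\cdot h$ up to inverses; and this is word for word the negation of the $(H,G)$-homotopically Hausdorff condition applied to the path $\alpha^{-1}$ originating at $x_0$. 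Your forward direction is essentially sound, but it proves Lemma \ref{ClosedImpliesHH} rather than this proposition.
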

\proof $h_\alpha(G)$ being closed in $h_\alpha(H)$ means existence,
for each $h\in H\setminus G$, of a neighborhood U of $x_1=\alpha(0)$
such that $B([\alpha\ast h\ast\alpha^{-1}],U)\cap ([\alpha]\cdot G\cdot [\alpha^{-1}])=\emptyset$. Thus, for every loop $\gamma$ in $U$ at $x_1$, there is no $g\in G$
satisfying $[\alpha\ast h\ast\alpha^{-1}\ast\gamma^{-1}]=[\alpha\ast g\ast\alpha^{-1}]$.
The last equality is equivalent to $[g\ast h]=[\alpha^{-1}\ast\gamma\ast\alpha]$
which completes the proof.
\endproof

\begin{example} Proposition \ref{HGHausdorffAndBasicTopology}
allows for an easy construction of subgroups $H$ of $\pi_1(X,x_0)$
such that $X$ is not homotopically Hausdorff relative to $H$.
Namely, $X=S^1\times S^1\times \ldots$ and $H=\bigoplus Z\subset
\prod Z=\pi_1(X)$.
\end{example}

Let us show $G$ being closed in $H$ (in the new topology) is a stronger condition
than $X$ being $(H,G)$-homotopically Hausdorff.

\begin{lemma}\label{ClosedImpliesHH}
Suppose $G\subset H$ are subgroups of $\pi_1(X,x_0)$.
If $G$ is closed in $H$, then  $X$ is $(H,G)$-homotopically Hausdorff.
\end{lemma}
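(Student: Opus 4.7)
The plan is to reduce the claim to the cover-theoretic characterization of $G$ being closed in $H$, namely the implication f)$\implies$e) of Proposition \ref{FibersAreT2Lemma}. That result tells us: if $G$ is closed in $H$, then for every $h\in H\setminus G$ there exists an open cover $\mathcal{U}$ of $X$ (one may also assume a distinguished $V\in\mathcal{U}$ containing $x_0$) such that $(G\cdot h)\cap\pi(\mathcal{U},x_0)=\emptyset$. This is precisely the kind of ``separation'' we need, only phrased in terms of $\pi(\mathcal{U},x_0)$ rather than loops in a single neighborhood of $\alpha(1)$.

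Fix $h\in H\setminus G$ and a path $\alpha$ in $X$ originating at $x_0$, with $x_1=\alpha(1)$. Choose $\mathcal{U}$ as above and pick any $U\in\mathcal{U}$ with $x_1\in U$. I claim this $U$ witnesses the $(H,G)$-homotopically Hausdorff condition at $\alpha$. The key observation is purely definitional: for every loop $\gamma$ in $(U,x_1)$, the class $[\alpha\ast\gamma\ast\alpha^{-1}]$ is, by definition of $\pi(\mathcal{U},x_0)$, one of the generators of that subgroup. Hence $[\alpha\ast\gamma\ast\alpha^{-1}]\in\pi(\mathcal{U},x_0)$. If some element of $G\cdot h$ were equal to $[\alpha\ast\gamma\ast\alpha^{-1}]$ for such a $\gamma$, it would then lie in $(G\cdot h)\cap\pi(\mathcal{U},x_0)$, contradicting the choice of $\mathcal{U}$.

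The argument is essentially one line once Proposition \ref{FibersAreT2Lemma} is invoked, so there is no real obstacle; the only point to keep straight is bookkeeping with base-points and conjugation, namely that $h$ is a loop at $x_0$ while $\gamma$ is a loop at $x_1=\alpha(1)$, so that the conjugate $\alpha\ast\gamma\ast\alpha^{-1}$ genuinely lands in $\pi_1(X,x_0)$ and is a generator of $\pi(\mathcal{U},x_0)$ in the form used in Section~\ref{SECTION: BS topology}. One could alternatively route the proof through Proposition \ref{HGHausdorffAndBasicTopology} by showing that closedness in the new topology on $\widetilde X$ implies closedness of $h_\alpha(G)$ in $h_\alpha(H)$ in the basic topology on $\pi_1$, but the direct route above is shorter and avoids mixing the two topologies.
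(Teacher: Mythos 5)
Your argument is, in substance, the paper's own proof: translate ``$G$ is closed in $H$'' into the existence, for each $h\in H\setminus G$, of an open cover $\mathcal{U}$ of $X$ separating $G$ from $h$ modulo $\pi(\mathcal{U},x_0)$, and then observe that for any $U\in\mathcal{U}$ containing $\alpha(1)$ and any loop $\gamma$ in $(U,\alpha(1))$ the class $[\alpha\ast\gamma\ast\alpha^{-1}]$ is by definition a generator of $\pi(\mathcal{U},x_0)$, so it cannot be an element of $G\cdot h$. The paper performs the same computation inline with the basic neighborhood $B(h,\mathcal{U},W)$, $W\in\mathcal{U}$ containing $x_0$: if $[\alpha\ast\gamma\ast\alpha^{-1}]=g\cdot h$, then $g^{-1}=h\cdot[\alpha\ast\gamma^{-1}\ast\alpha^{-1}]\in G\cap B(h,\mathcal{U},W)$, contradicting the choice of $\mathcal{U}$.

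The one step you must repair is the appeal to Proposition \ref{FibersAreT2Lemma}: that proposition is stated under the blanket hypothesis that $G$ is normal in $\pi_1(X,x_0)$, whereas Lemma \ref{ClosedImpliesHH} makes no normality assumption, so you cannot simply cite its implication f)$\implies$e). Fortunately the particular implication you need survives without normality, but you have to say why. If $V\in\mathcal{U}$ contains $x_0$, then $B(h,\mathcal{U},V)\cap\pi_1(X,x_0)=h\cdot\pi(\mathcal{U},x_0)$ (any path $\gamma_0$ occurring in the definition of the basic set is then a loop in $V$ at $x_0$, hence itself represents an element of $\pi(\mathcal{U},x_0)$); thus closedness of $G$ yields $G\cap h\cdot\pi(\mathcal{U},x_0)=\emptyset$, and applying the inversion $g\mapsto g^{-1}$, which preserves both $G$ and the subgroup $\pi(\mathcal{U},x_0)$, this is equivalent to $(G\cdot h)\cap\pi(\mathcal{U},x_0)=\emptyset$. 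With that short substitute for the citation, your proof is complete and coincides in essence with the one in the paper.
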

\proof Given $h\in H\setminus G$ pick an open cover $\mathcal{U}$
and $W\in\mathcal{U}$ containing $x_0$
so that $B(h,\mathcal{U},W)$ does not intersect $G$.
Given a path $\alpha$ in $X$ from $x_0$ to $x_1$ choose $V\in\mathcal{U}$
containing $x_1$. Suppose there is a loop $\gamma$ in $(V,x_1)$
so that $[\alpha\ast\gamma\ast\alpha^{-1}]=g\cdot h$ for some $g\in G$.
Now $[\alpha\ast\gamma^{-1}\ast\alpha^{-1}]\in \pi(\mathcal{U},x_0)$
and $g^{-1}=h\ast [\alpha\ast\gamma^{-1}\ast\alpha^{-1}]\in G\cap B(h,\mathcal{U},W)$, a contradiction.
\endproof

\begin{remark}\label{StronglyHHRemark}
The proof of \ref{ClosedImpliesHH} suggests that the trivial subgroup of
$\pi_1(X,x_0)$ being closed is philosophically related
to the concept of $X$ being {\bf strongly homotopically Hausdorff} (see \cite{RepZas}).
Recall a metric space $X$ is strongly homotopically Hausdorff if
 for any non-null-homotopic loop $\alpha$ in $X$ there is an $\epsilon > 0$
 such that $\alpha$ is not freely homotopic to a loop of
 diameter less than $\epsilon$.
\end{remark}

\begin{lemma}\label{HSmallLemma}
Given subgroups $G\subset H$ of $\pi_1(X,x_0)$ the following conditions are equivalent:
\begin{itemize}
\item[a)] The fibers of the natural projection
$p\colon\widehat X_G\to \widehat X_H$ are $T_0$,
\item[b)] The fibers of the natural projection
$p\colon\widehat X_G\to \widehat X_H$ are Hausdorff,
\item[c)] $X$ is $(H,G)$-homotopically Hausdorff.
\end{itemize}
\end{lemma}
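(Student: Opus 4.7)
The plan is to analyze the fiber $F$ of $p\colon\widehat X_G\to\widehat X_H$ over an arbitrary point $[\alpha]_H$ and relate its separation properties to the $(H,G)$-homotopically Hausdorff condition. An element of $F$ may be written as $[\alpha\ast\lambda]_G$, where $\lambda$ is a loop at $\alpha(1)$ satisfying $[\alpha\ast\lambda\ast\alpha^{-1}]\in H$, and two such are equal iff $[\alpha\ast\lambda_1\ast\lambda_2^{-1}\ast\alpha^{-1}]\in G$. A basic open neighborhood of $[\alpha\ast\lambda]_G$ in $F$ consists of classes $[\alpha\ast\lambda\ast\eta]_G$ for loops $\eta$ in a prescribed open set $V\ni\alpha(1)$ with $[\alpha\ast\eta\ast\alpha^{-1}]\in H$. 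The implication (b)$\implies$(a) is immediate.

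For (a)$\implies$(c) I argue contrapositively. Assume $X$ fails to be $(H,G)$-homotopically Hausdorff, witnessed by $h\in H\setminus G$ and a path $\alpha$ from $x_0$: every neighborhood $U$ of $\alpha(1)$ contains a loop $\gamma_U$ and admits $g_U\in G$ with $[\alpha\ast\gamma_U\ast\alpha^{-1}]=g_U\cdot h$. Pick a loop $\lambda_h$ at $\alpha(1)$ with $[\alpha\ast\lambda_h\ast\alpha^{-1}]=h$, and put $x=[\alpha]_G$, $y=[\alpha\ast\lambda_h]_G$. These are distinct in $F$ since $h\notin G$, yet a one-line check using $g_Uh\cdot h^{-1}=g_U\in G$ gives $[\alpha\ast\gamma_U]_G=[\alpha\ast\lambda_h]_G=y$, so $y\in B_G(x,U)\cap F$ for every $U$. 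The elementary fact that $B_G([\beta]_G,V)=B_G([\alpha]_G,V)$ whenever $[\alpha]_G\in B_G([\beta]_G,V)$ then forces the relation to be symmetric, so every basic open neighborhood of $y$ also contains $x$, and $F$ is not $T_0$.

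For (c)$\implies$(b) I pick two distinct points of the fiber over some $[\alpha_0]_H$; replacing $\alpha_0$ by the first representative lets me write the two points as $[\alpha]_G$ and $[\alpha\ast\lambda]_G$ with $h:=[\alpha\ast\lambda\ast\alpha^{-1}]\in H\setminus G$. Applying the $(H,G)$-homotopically Hausdorff property to $h$ and $\alpha$ yields an open $U\ni\alpha(1)$ such that no loop $\gamma$ in $U$ at $\alpha(1)$ has $[\alpha\ast\gamma\ast\alpha^{-1}]\in G\cdot h$. I then verify $B_G([\alpha]_G,U)\cap B_G([\alpha\ast\lambda]_G,U)\cap F=\emptyset$: any common element delivers loops $\eta_1,\eta_2$ in $U$ at $\alpha(1)$ with $[\alpha\ast\eta_1]_G=[\alpha\ast\lambda\ast\eta_2]_G$, i.e.\ $[\alpha\ast\eta_1\ast\eta_2^{-1}\ast\lambda^{-1}\ast\alpha^{-1}]\in G$; right-multiplying by $h=[\alpha\ast\lambda\ast\alpha^{-1}]$ yields $[\alpha\ast(\eta_1\ast\eta_2^{-1})\ast\alpha^{-1}]\in G\cdot h$, contradicting the choice of $U$.

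The crux is to choose the right test points in $F$ and to guarantee that non-separability is automatically symmetric. The key observation, used throughout, is that the basic neighborhoods $B_G([\beta]_G,V)$ depend only on the open set they cut out and not on the chosen representative $[\beta]_G$; this clean algebraic behavior makes both directions go through without any normality hypothesis on $G$ or $H$.
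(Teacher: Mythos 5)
Your proof is correct and follows essentially the same route as the paper: both arguments reduce to the same two points $[\alpha]_G$ and $[h\ast\alpha]_G=[\alpha\ast\lambda]_G$ of a fiber, exploit the symmetry $[\beta]_G\in B_G([\alpha]_G,U)\iff[\alpha]_G\in B_G([\beta]_G,U)$ to upgrade $T_0$-separation to the two-sided statement, and translate membership in $B_G(\cdot,U)\cap(\text{fiber})$ into expressing an element of $G\cdot h$ as $[\alpha\ast\gamma\ast\alpha^{-1}]$ for a loop $\gamma$ in $U$. The only cosmetic differences are that you argue (a)$\implies$(c) contrapositively and parametrize the fiber by loops appended at $\alpha(1)$ rather than prepended at $x_0$.
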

\proof a)$\implies$c). Suppose $h\in H\setminus G$ and $\alpha$
is a path in $X$ from $x_0$ to $x_1$.
As $[h\ast\alpha]_G\ne [\alpha]_G$ belong to the same fiber
of $p$, there is a neighborhood $U$ of $x_1$ so that $[h\ast\alpha]_G\notin
B_G([\alpha]_G,U)$ or $[\alpha]_G\notin
B_G([h\ast\alpha]_G,U)$. Notice $[h\ast\alpha]_G\notin
B_G([\alpha]_G,U)$ is equivalent to $[\alpha]_G\notin
B_G([h\ast\alpha]_G,U)$.
Suppose there is a loop $\gamma$ in $(U,x_1)$
so that $g\cdot h=[\alpha\ast\gamma\ast\alpha^{-1}]$ for some $g\in G$.
Now $[h\ast\alpha]_G=[g\cdot h\ast\alpha]_G=[\alpha\ast\gamma]_G\in B_G([\alpha]_G,U)$,
a contradiction.

c)$\implies$b). Any two different elements of the same fiber of $p$ can be represented
as $[h\ast\alpha]_G\ne [\alpha]_G$ for some path $\alpha$
 in $X$ from $x_0$ to $x_1$ and some $h\in H\setminus G$.
Choose a neighborhood $U$ of $x_1$ with the property
that none of the elements of $G\cdot h$ can be expressed as $[\alpha\ast\gamma\ast\alpha^{-1}]$ for any loop $\gamma$ in $(U,x_1)$.
Suppose $[\beta]_G\in (H/G)\cap B_G([\alpha]_G,U)\cap B_G([h\ast\alpha]_G,U)$.
That means existence of loops $\gamma_1,\gamma_2$
in $(U,x_1)$ so that $[\beta]_G=[h\ast\alpha\ast\gamma_1]_G=[\alpha\ast\gamma_2]_G$.
Hence $[h]_G=[\alpha\ast(\gamma_2\ast\gamma_1^{-1})\ast \alpha^{-1}]_G$,
a contradiction.
\endproof

\begin{lemma}\label{NewHMediumLemma}
Supppose $G\subset H$ are subgroups of $\pi_1(X,x_0)$, $G$ is normal in $\pi_1(X,x_0)$,
and $X$ is $(H,G)$-homotopically Hausdorff.
If $\alpha,\beta\colon (I,0)\to (\widehat X_G,\widehat x_0)$ are two
continuous lifts of the same path
$\gamma\colon (I,0)\to (\widehat X_H,\widehat x_0)$, then for every $h\in H$ the set
$$S=\{t\in I | \alpha(t)= h\cdot \beta(t)\}$$ is closed.
\end{lemma}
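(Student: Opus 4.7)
\proof (Proposal)
The plan is to show that the complement $I \setminus S$ is open, by taking $t_0 \notin S$ and producing a neighborhood of $t_0$ avoiding $S$. Write $\alpha(t) = [\alpha_t]_G$ and $\beta(t) = [\beta_t]_G$. Since $\alpha,\beta$ are both lifts of $\gamma$, the endpoints satisfy $\alpha_t(1) = \beta_t(1)$ for all $t$, and $[\alpha_{t_0}]_H = [\beta_{t_0}]_H$, so $g := [\alpha_{t_0} \ast \beta_{t_0}^{-1}] \in H$. A direct computation (expanding $\alpha(t_0) = h \cdot \beta(t_0)$) shows that $t_0 \in S$ is equivalent to $g \in G \cdot h$. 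Thus $t_0 \notin S$ is equivalent to $h' := h \cdot g^{-1} \in H \setminus G$.

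Next I would apply the $(H,G)$-homotopically Hausdorff property to $h'$ and the path $\alpha_{t_0}$: this yields a neighborhood $U$ of the common endpoint $x_1 = \alpha_{t_0}(1)$ such that no element of $G \cdot h'$ is of the form $[\alpha_{t_0} \ast \delta \ast \alpha_{t_0}^{-1}]$ for any loop $\delta$ in $(U,x_1)$. Using continuity of $\alpha$ and $\beta$ at $t_0$ with respect to the basic topology on $\widehat X_G$, I choose a neighborhood $N$ of $t_0$ so that for all $t \in N$,
$$\alpha(t) \in B_G([\alpha_{t_0}]_G, U), \qquad \beta(t) \in B_G([\beta_{t_0}]_G, U).$$
Unpacking this, for each $t \in N$ there are paths $\gamma_1(t), \gamma_2(t)$ in $U$ from $x_1$ to $\alpha_t(1) = \beta_t(1)$ with $\alpha_t \sim_G \alpha_{t_0} \ast \gamma_1(t)$ and $\beta_t \sim_G \beta_{t_0} \ast \gamma_2(t)$. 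Then $\delta_t := \gamma_1(t) \ast \gamma_2(t)^{-1}$ is a loop in $(U, x_1)$.

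The main algebraic step, and the place where normality of $G$ in $\pi_1(X, x_0)$ is used, is computing $[\alpha_t \ast \beta_t^{-1}]$ modulo $G$. Writing the $\sim_G$-equivalences as $[\alpha_t] = g_1 \cdot [\alpha_{t_0} \ast \gamma_1(t)]$, $[\beta_t] = g_2 \cdot [\beta_{t_0} \ast \gamma_2(t)]$ with $g_1,g_2 \in G$, and inserting $\alpha_{t_0}^{-1} \ast \alpha_{t_0}$ in the middle, one obtains
$$[\alpha_t \ast \beta_t^{-1}] \;=\; g_1 \cdot [\alpha_{t_0} \ast \delta_t \ast \alpha_{t_0}^{-1}] \cdot g \cdot g_2^{-1}.$$
By normality, $G \cdot y \cdot G = G \cdot y$ for any $y$, so $[\alpha_t \ast \beta_t^{-1}]$ lies in the coset $G \cdot [\alpha_{t_0} \ast \delta_t \ast \alpha_{t_0}^{-1}] \cdot g$. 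A short calculation shows $[\alpha_{t_0} \ast \delta_t \ast \alpha_{t_0}^{-1}] \cdot g \in G \cdot h$ iff $[\alpha_{t_0} \ast \delta_t \ast \alpha_{t_0}^{-1}] \in G \cdot h'$, which is precisely what was ruled out by the choice of $U$. Hence $[\alpha_t \ast \beta_t^{-1}] \notin G \cdot h$ for all $t \in N$, so $N \cap S = \emptyset$ and $S$ is closed.

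The main obstacle is the coset bookkeeping in the last step: one must keep track of which factors lie in $G$, which in $H$, and use the normality of $G$ at exactly the right moment to absorb $g_1$ and $g_2^{-1}$ and reduce the question to one about the small loop $\delta_t$, to which the homotopically Hausdorff hypothesis directly applies.
\endproof
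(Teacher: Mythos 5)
Your proof is correct and takes essentially the same route as the paper's: both show the complement of $S$ is open by applying the $(H,G)$-homotopically Hausdorff property at the common endpoint $x_1$, pulling back basic neighborhoods $B_G(\cdot,U)$ via continuity of the two lifts, and using normality of $G$ to absorb the $G$-factors and reduce membership in $G\cdot h$ to a statement about the small loop $\delta_t$ in $U$. The only cosmetic difference is that the paper conjugates the small loop by $\beta_{t_0}$ (its $v_t$) rather than by $\alpha_{t_0}$, whereas you normalize by introducing $h'=h\cdot g^{-1}$; the coset bookkeeping is equivalent.
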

\proof
Choose paths $u_t,v_t$ in $(X,x_0)$
so that $\alpha(t)=[u_t]_G$ and $\beta(t)=[v_t]_G$ for all $t\in I$.
Assume $[u_t]_G\ne [h\cdot v_t]_G$ for some $t\in I$.
Pick a neighborhood $U$ of $x_1=u_t(1)$ so that
$[v_t\ast u_t^{-1}]\cdot h\cdot G\ne [v_t\ast \gamma\ast v_t^{-1}]\cdot G$
for any loop $\gamma$ in $(U,x_1)$.
There is a neighborhood $V$ of $t$ in $I$ so that
$[u_s]_G\in B_G([u_t]_G,U)$ and $[v_s]_G\in B_G([v_t]_G,U)$
for all $s\in V$. That means $[u_s]=[g_1\ast u_t\ast\gamma_1]$
and $[v_s]=[g_2\ast v_t\ast \gamma_2]$
for some $g_1,g_2\in G$ and some paths $\gamma_1,\gamma_2$
in $U$ joining $x_1$ and $u_1(1)=v_s(1)$.
Put $\gamma=\gamma_1\ast \gamma_2^{-1}$
and notice $[u_s\ast v_s^{-1}]=[g_1\ast u_t\ast v_t^{-1}\ast (v_t\ast \gamma\ast v_t^{-1})\ast g_2^{-1}]$.
As $G$ is normal in $\pi_1(X,x_0)$, there is $g_3\in G$
satisfying
$[g_1\ast u_t\ast v_t^{-1}\ast (v_t\ast \gamma\ast v_t^{-1})\ast g_2^{-1}]=[g_3\ast u_t\ast v_t^{-1}\ast (v_t\ast \gamma\ast v_t^{-1})]$
and that element cannot belong to $G\cdot h$ by the choice of $U$.
\endproof

\begin{corollary}\label{HAndPathLifting}
Supppose $G\subset H$ are subgroups of $\pi_1(X,x_0)$.
If $H/G$ is countable, $G$ is normal in $\pi_1(X,x_0)$,
and $X$ is $(H,G)$-homotopically Hausdorff, then the natural map
$\widehat X_G\to \widehat X_H$ has the uniqueness of path lifts property.
\end{corollary}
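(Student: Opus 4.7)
The plan is to partition $I$ into closed subsets indexed by cosets of $G$ in $H$, then invoke Sierpi\'nski's theorem on continua to force the partition to be trivial.

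Let $\alpha,\beta\colon (I,0)\to (\widehat X_G,\widehat x_0)$ be two continuous lifts of the same path $\gamma\colon (I,0)\to (\widehat X_H,\widehat x_0)$. For each $t\in I$, write $\alpha(t)=[u_t]_G$ and $\beta(t)=[v_t]_G$; since both project to the same point of $\widehat X_H$, we have $[u_t\ast v_t^{-1}]\in H$. Hence there exists $h(t)\in H$, unique modulo $G$, with $\alpha(t)=h(t)\cdot\beta(t)$ in $\widehat X_G$, where $h\cdot[v]_G=[h\ast v]_G$. This action is trivial for $h\in G$, and since $G$ is normal in $\pi_1(X,x_0)$ one has $Gh=hG$ for $h\in H$, so the condition $\alpha(t)=h\cdot\beta(t)$ depends only on the coset $[h]\in H/G$.

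For each $[h]\in H/G$, set
$$S_{[h]}=\{\,t\in I : \alpha(t)=h\cdot\beta(t)\,\}.$$
Then $\{S_{[h]}\}_{[h]\in H/G}$ is a pairwise disjoint covering of $I$, and by Lemma~\ref{NewHMediumLemma} (whose hypotheses are precisely those of the present corollary) each $S_{[h]}$ is closed in $I$. Because $H/G$ is countable, we have realized the continuum $I$ as a countable disjoint union of closed sets. Sierpi\'nski's classical theorem on continua---a compact connected Hausdorff space cannot be written as a countable union of two or more pairwise disjoint nonempty closed subsets---then implies that at most one $S_{[h]}$ is nonempty. Since $\alpha(0)=\beta(0)=\widehat x_0$, we have $0\in S_{[1]}$, so $S_{[1]}=I$, giving $\alpha=\beta$ as required.

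The hard work is in Lemma~\ref{NewHMediumLemma}, which converts the $(H,G)$-homotopically Hausdorff hypothesis into closedness of each $S_{[h]}$; once this is granted, the only remaining subtlety is recognizing that a countable closed partition of $I$ must be trivial, which is precisely where Sierpi\'nski's theorem and the countability of $H/G$ are used.
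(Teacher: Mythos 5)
Your proof is correct and follows essentially the same route as the paper: decompose $I$ into the pairwise disjoint sets $S_{[h]}$ indexed by $H/G$, use Lemma~\ref{NewHMediumLemma} to see each is closed, and conclude from connectedness of $I$ that only the set containing $0$ is nonempty. The one difference is that you explicitly invoke Sierpi\'nski's theorem on countable closed partitions of a continuum, which is exactly the justification the paper leaves implicit in the phrase ``only one of them is non-empty'' and is genuinely needed when $H/G$ is countably infinite.
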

\proof Pick representatives $h_i\in H$, $i\ge 1$, of all right cosets of $H/G$
so that $h_1=1$.
If $\alpha$ and $\beta$ are two continuous lifts
in $\widehat X_G$ of the same path in $\widehat X_H$,
then each set $S_i= \{t\in I | \alpha(t)= h_i\cdot \beta(t)\}$ is closed,
they are disjoint, and their union is the whole interval $I$.
Hence only one of them is non-empty and it must be $S_1$.
Thus $\alpha=\beta$.
\endproof

\section{Peano maps}\label{SECTION Peano-maps}

This section is about one of the main ingredients of our theory
of covering maps for lpc-spaces. It amounts to the following generalization of Peano spaces:

\begin{definition}\label{PeanoMapDef}
A map $f\colon X\to Y$ is a {\bf Peano map} if the family of path components
of $f^{-1}(U)$, $U$ open in $Y$, forms a basis of neighborhoods of $X$.
\end{definition}

Notice $X$ is an lpc-space if $f\colon X\to Y$ is a Peano map.
One may reword the above definition as follows:
$X$ is an lpc-space and lifts of short paths in $Y$ are short in $X$.
Indeed, given a neighborhood $U$ of $x_0\in X$ there is a neighborhood $V$
of $f(x_0)$ in $Y$ such that any path $\alpha$ in $(f^{-1}(V),x_0)$
(i.e. $f\circ \alpha$ is contained in $V$, hence short) must be contained in $U$.

\begin{proposition}\label{ProductOfPeanoMaps}
Any product of Peano maps is a Peano map.
\end{proposition}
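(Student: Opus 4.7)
The plan is to verify the Peano map condition directly at a point $x=(x_s)_{s\in S}$ of $\prod_{s\in S} X_s$ using a basic product-topology neighborhood. Let $f=\prod_s f_s$ and take a basic open neighborhood $U=\prod_s U_s$ of $x$, where $U_s$ is open in $X_s$ and $U_s=X_s$ for all $s$ outside a finite set $F\subset S$. I need to produce an open set $V\subset \prod_s Y_s$ containing $f(x)$ such that the path component of $x$ in $f^{-1}(V)$ is contained in $U$.

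For each $s\in F$, apply the Peano property of $f_s$: there is an open neighborhood $V_s$ of $f_s(x_s)$ in $Y_s$ such that the path component $C_s$ of $x_s$ in $f_s^{-1}(V_s)$ is contained in $U_s$. For $s\notin F$ simply set $V_s=Y_s$, so that $f_s^{-1}(V_s)=X_s=U_s$. Then $V:=\prod_s V_s$ is a basic open set in $\prod_s Y_s$ (only finitely many factors differ from the full space), it contains $f(x)$, and $f^{-1}(V)=\prod_s f_s^{-1}(V_s)$.

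The key observation is the standard fact that in the product topology, the path component of a point in a product equals the product of path components of its coordinates. This holds because paths can be assembled coordinatewise: a family of paths $\gamma_s$ in $f_s^{-1}(V_s)$ starting at $x_s$ yields a continuous path $t\mapsto(\gamma_s(t))_{s\in S}$ in $\prod_s f_s^{-1}(V_s)$. Applied here, the path component of $x$ in $f^{-1}(V)$ equals $\prod_s C_s$ (with $C_s$ the whole of $X_s$ for $s\notin F$), which is contained in $\prod_s U_s=U$.

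There is no real obstacle; the only point to verify carefully is the coordinatewise description of path components, which follows from the universal property of the product topology. Since the sets $\prod_s U_s$ described above form a basis for the product topology on $\prod_s X_s$, we conclude that path components of preimages of open sets in $\prod_s Y_s$ form a basis of neighborhoods in $\prod_s X_s$, i.e. $f$ is a Peano map.
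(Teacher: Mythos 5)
Your argument is essentially the paper's own proof: reduce to a basic product neighborhood $U=\prod_s U_s$, apply the Peano property of each $f_s$ on the finitely many coordinates where $U_s\ne X_s$, take $V_s=Y_s$ elsewhere, and use that the path component of a point in a product is the product of the coordinate path components (which you verify correctly). The containment half of the argument is fine. The one claim you should not make without justification is the parenthetical ``$C_s$ is the whole of $X_s$ for $s\notin F$'': the path component of $x_s$ in $f_s^{-1}(Y_s)=X_s$ is all of $X_s$ only when $X_s$ is path-connected. This matters for your final sentence, because a basis of neighborhoods must consist of sets that are actually neighborhoods: $\prod_s C_s$ is open in the product topology only if $C_s=X_s$ for all but finitely many $s$, so if infinitely many factors fail to be path-connected the path components you exhibit are small but are not neighborhoods of $x$. (Concretely, the identity on a two-point discrete space is a Peano map, yet the countable product is the identity on the Cantor set, whose open sets have singleton path components.) To be fair, the paper's proof has the identical gap --- it simply asserts that $\prod_s X_s$ is an lpc-space, which likewise requires all but finitely many factors to be path-connected --- so your proposal is faithful to the source; just be aware that both arguments silently use this extra hypothesis, and only the ``path components can be made small'' half is established in full generality.
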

\proof Suppose $f_s\colon X_s\to Y_S$, $s\in S$, are Peano maps.
Observe $X=\prod\limits_{s\in S}X_s$ is an lpc-space.
Given a neighborhood $U$ of $x=\{x_s\}_{s\in S}\in X$, we find a finite subset $T$ of $S$
and neighborhoods $U_s$ of $x_s$ in $X_s$ such that
$\prod\limits_{s\in S}U_s\subset U$ and $U_s=X_s$ for $s\notin T$.
Choose neighborhoods $V_s$ of $f_s(x_s)$ in $Y_s$, $s\in T$,
so that the path-component of $x_s$ in $f_s^{-1}(V_s)$ is contained in $U_s$.
Put $V_s=X_s$ for $s\notin T$ and observe the path component
of $x$ in $f^{-1}(V)$, $f=\prod\limits_{s\in S}f_s$ and $V=\prod\limits_{s\in S}V_s$,
is contained in $U$.
\endproof

Here is our basic class of Peano maps:

\begin{proposition}\label{EndpointProjectionIsPeano}
If $H$ is a subgroup of $\pi_1(X,x_0)$, then the endpoint projection
$p_H\colon\widehat X_H \to X$ is a Peano map.
\end{proposition}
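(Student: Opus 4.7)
\proof
The plan is to establish two things: first, that $\widehat X_H$ is locally path-connected, and second, that for every open neighborhood $W$ of a point $[\alpha]_H \in \widehat X_H$ there is an open set $U \subseteq X$ whose preimage under $p_H$ has its path component containing $[\alpha]_H$ sitting inside $W$.

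For local path-connectedness, I will show that each basic open set $B_H([\alpha]_H, U)$ is itself path-connected. Given $[\beta_1]_H, [\beta_2]_H \in B_H([\alpha]_H, U)$, write $[\beta_i]_H = [\alpha \ast \gamma_i]_H$ with $\gamma_i$ a path in $U$ from $\alpha(1)$. The standard lift of $\gamma_1^{-1} \ast \gamma_2$ (a path in $U$) starting at $[\alpha \ast \gamma_1]_H$ passes at time $t$ through $[\alpha \ast \gamma_1 \ast (\gamma_1^{-1}\ast\gamma_2)_t]_H$, which is again of the form $[\alpha \ast (\text{path in }U)]_H$ and so remains in $B_H([\alpha]_H,U)$, connecting $[\beta_1]_H$ to $[\beta_2]_H$.

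For the Peano property, let $W$ be an open neighborhood of $[\alpha]_H$ and pick a basic $B_H([\alpha]_H, U) \subseteq W$. I claim the path component $C$ of $[\alpha]_H$ in $p_H^{-1}(U)$ equals $B_H([\alpha]_H, U)$. The inclusion $B_H([\alpha]_H,U) \subseteq C$ is immediate from path-connectedness above. For the reverse inclusion, take $[\beta]_H \in C$ and a path $\Gamma \colon I \to p_H^{-1}(U)$ from $[\alpha]_H$ to $[\beta]_H$. Since $p_H(\Gamma(t)) \in U$ for every $t$, the set $B_H(\Gamma(t),U)$ is a legitimate basic neighborhood of $\Gamma(t)$, and by continuity of $\Gamma$ together with compactness of $I$ we can choose a partition $0 = t_0 < t_1 < \cdots < t_n = 1$ with $\Gamma(t_i) \in B_H(\Gamma(t_{i-1}), U)$ for each $i$. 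Writing $\Gamma(t_i) = [\beta_i]_H$ with $\beta_0 = \alpha$ and $\beta_n = \beta$, this means there exist paths $\gamma_i$ in $U$ with $[\beta_i]_H = [\beta_{i-1} \ast \gamma_i]_H$. Concatenating yields $[\beta]_H = [\alpha \ast \gamma_1 \ast \cdots \ast \gamma_n]_H$, where $\gamma_1 \ast \cdots \ast \gamma_n$ is a path in $U$, so $[\beta]_H \in B_H([\alpha]_H, U)$.

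The main obstacle is precisely this last step: since $p_H$ need not have unique path lifting in general, one cannot simply invoke the equality of a lift of $p_H\circ\Gamma$ with $\Gamma$ itself. The compactness argument above bypasses this by subdividing $\Gamma$ so that each short segment is controlled by a basic neighborhood defined with the \emph{same} open set $U$, ensuring that the concatenation representing $[\beta]_H$ stays in $U$. Combining the two paragraphs, path components of preimages of open sets form a basis of neighborhoods of $\widehat X_H$, so $p_H$ is a Peano map.
\endproof
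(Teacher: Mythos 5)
Your proof is correct and follows essentially the same route as the paper: both reduce the statement to showing that $B_H([\alpha]_H,U)$ is exactly the path component of $[\alpha]_H$ in $p_H^{-1}(U)$, first checking that $B_H([\alpha]_H,U)$ is path-connected and then that any path in $p_H^{-1}(U)$ starting there cannot leave it. The only difference is mechanical --- you propagate along the path by a Lebesgue-number subdivision into finitely many steps each controlled by a $B_H(\cdot,U)$ with the same $U$, while the paper runs the equivalent ``the set of good times is nonempty, open, and closed from the left'' argument; both hinge on the same observation that the sets $B_H(\cdot,U)$ partition $p_H^{-1}(U)$ and that the witnessing paths in $U$ can be concatenated.
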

\begin{proof}
It suffices to show that for any $U$ open in $X$ the path component of any $
[\alpha]_H$ in $p_H^{-1}(U)$ is precisely $B_H([\alpha]_H,U)$. It's straightforward that $
B_H([\alpha ]_H,U)$ is path-connected so suppose $\beta $ is a path in $
p_H^{-1}(U)$ starting at $[\alpha]_H.$ We wish to show that
$\beta ([0,1])\subset B_H([\alpha]_H,U).$ Let $T=\{t:\beta (t)\in B_H([\alpha]_H,U)\}.$ Now $T$ is
nonempty since $\beta (0)=[\alpha]_H$ and open as the inverse image of an open set.
It suffices to prove $[0,t)\subset T$ implies $[0,t]\subset T$.
 Set $\beta (t)=[b]_H.$
Now $p_H\beta ([0,1])\subset U$ so in particular $p_H([b]_H)\in U.$ Consider $
B_H([b]_H,U).$ There is an $\varepsilon >0$ such that $\beta (t-\varepsilon
,t]\subset B_H([b]_H,U).$ Pick $s\in (t-\varepsilon
,t)$.  Then $\beta (s)=[c_{1}]_H$ and $
[b]_H=[b_{1}]_H$ such that $c_{1}\simeq b_{1}\ast \gamma _{1}$ for some $\gamma
_{1}$ with $\gamma _{1}[0,1]\subset U.$ But $\beta (s)\in B_H([\alpha]_H,U)$ so $
\beta (s)=[c_{2}]_H$ and $[\alpha]_H=[a_{1}]_H$ such that $c_{2}\simeq a_{1}\ast
\gamma _{2}$ for some $\gamma _{2}$ with $\gamma _{2}([0,1])\subset U.$ Then $
b\simeq_H b_{1}\simeq c_{1}\ast \gamma _{1}^{-1}\simeq_H c_{2}\ast \gamma
_{1}^{-1}\simeq a_{1}\ast \gamma _{2}\ast \gamma _{1}^{-1}\simeq_H a\ast
\gamma _{2}\ast \gamma _{1}^{-1}$ and $(\gamma _{2}\ast \gamma
_{1}^{-1})([0,1])\subset U$ so $[b]_H\in B_H([\alpha]_H,U)$ and $t\in T.$ Therefore $
T=[0,1].$
\end{proof}

In analogy to path lifting and unique path lifting properties (see \ref{PathLiftingDef}
and \ref{UniquenessOfPathLiftsDef})
one can introduce the corresponding concepts for hedgehogs:

\begin{definition}\label{HedgehogLiftingDef}
 A surjective map $f\colon X\to Y$ has the {\bf hedgehog lifting property}
if for any map $\alpha\colon \bigvee\limits_{s\in S} I_s\to Y$ from a hedgehog and any $y_0\in f^{-1}(\alpha(0))$ there is a
continuous lift $\beta\colon \bigvee\limits_{s\in S} I_s\to X$ of $\alpha$ such that $\beta(0)=y_0$.
\end{definition}

\begin{definition}\label{UniquenessOfHedgehogLiftsDef}
 $f\colon X\to Y$ has the {\bf unique hedgehog lifting property}
if it has both the hedgehog lifting property and the uniqueness of path lifts property.
\end{definition}

\begin{theorem}\label{HedgehogPeanoTheorem}
If $f\colon X\to Y$ has the unique hedgehog lifting property, then
\par
\noindent
$f\colon lpc(X)\to Y$ is a Peano map.
\end{theorem}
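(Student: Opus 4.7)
The plan is to verify the Peano-map condition by contradiction, using the hedgehog construction in the spirit of Proposition \ref{MapsFromPeanoToArbitrary}. Unfolding the definitions, I need to show that for every $x_0 \in lpc(X)$ and every basic open neighborhood $c(x_0, V)$ in $lpc(X)$ (where $V$ is open in $X$ with $x_0 \in V$), there is an open $U \subseteq Y$ containing $f(x_0)$ such that the path component of $x_0$ in $f^{-1}(U)$ lies inside $c(x_0, V)$. Since $f^{-1}(U)$ is open in $X$, its path components in $X$ and in $lpc(X)$ coincide, so I can work with either topology.

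Suppose the claim fails for some $x_0$ and $V$. Let $S$ be the family of open neighborhoods of $f(x_0)$ in $Y$, directed by reverse inclusion. For each $U \in S$ I choose a path $\alpha_U\colon (I, 0) \to (f^{-1}(U), x_0)$ in $X$ with endpoint $y_U = \alpha_U(1) \notin c(x_0, V)$. The compositions $f \circ \alpha_U$ lie inside $U$, so Lemma \ref{BasicHedgeHogLemma} assembles them into a continuous map $\gamma = \bigvee_{U \in S}(f \circ \alpha_U)\colon \bigvee_{U \in S} I_U \to Y$ from a hedgehog.

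The unique hedgehog lifting property then provides a continuous lift $\bar\gamma\colon \bigvee I_U \to X$ sending the basepoint to $x_0$; by uniqueness of path lifts, $\bar\gamma|_{I_U} = \alpha_U$ for each $U$. Because the hedgehog is a Peano space, the universal property of $lpc(X)$ from Theorem \ref{LPCExistsThm} upgrades $\bar\gamma$ to a continuous map $\tilde\gamma\colon \bigvee I_U \to lpc(X)$ with the same underlying function. Now $\tilde\gamma^{-1}(c(x_0, V))$ is an open neighborhood of the basepoint of the hedgehog, so by the hedgehog topology it must contain $I_U$ for every $U$ beyond some $t \in S$; for such $U$ the entire image $\alpha_U(I) = \tilde\gamma(I_U)$ lies in $c(x_0, V)$, contradicting $y_U \notin c(x_0, V)$.

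The main subtlety is the interplay between the original topology on $X$ and the finer topology on $lpc(X)$: the contradiction lives at the $lpc(X)$ level, because $c(x_0, V)$ need not be open in $X$ at all, so I must promote the $X$-valued hedgehog lift to an $lpc(X)$-valued one via the universal property. Everything else is routine: continuity of $\gamma$ at the wedge basepoint (the hypothesis of Lemma \ref{BasicHedgeHogLemma}) is automatic from $f \circ \alpha_U \subseteq U$, and uniqueness of path lifts is precisely what pins down $\bar\gamma|_{I_U}$ as the original $\alpha_U$, which is what makes the endpoint contradiction go through.
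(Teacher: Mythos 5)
Your proof is correct and follows essentially the same argument as the paper: assemble the offending paths into a hedgehog via Lemma \ref{BasicHedgeHogLemma}, identify the lift using uniqueness of path lifts, and contradict openness at the wedge point. The only (harmless) difference is in the last step: the paper phrases the contradiction with an open set $U$ of $X$ itself — trapping the path component of $x_0$ in $f^{-1}(W)$ inside $U$ automatically traps it inside $c(x_0,U)$, since that path component is path-connected — so it never needs to promote the hedgehog lift from $X$ to $lpc(X)$; your extra promotion via the universal property of Theorem \ref{LPCExistsThm} is valid but avoidable.
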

\proof Assume $U$ is open in $X$ and $x_0\in U$.
Suppose for each neighborhood $V$ of $f(x_0)$ in $X$ there is a path
$\alpha_V\colon (I,0)\to (f^{-1}(V),x_0)$ such that $\alpha_V(1)\notin U$.
By \ref{BasicHedgeHogLemma} the wedge $\bigvee\limits_{V\in S}f\circ\alpha_V$
is a map $g$ from a hedgehog to $Y$ (here $S$ is the family of all neighborhoods
of $f(x_0)$ in $Y$). Its lift must be the wedge $h=\bigvee\limits_{V\in S}\alpha_V$.
However $h^{-1}(U)$ is not open in $lpc(X)$, a contradiction.
\endproof

\begin{definition}\label{PeanoFunctorDef}
Given a map $f\colon X\to Y$ of topological spaces
its {\bf Peano map} $P(f)\colon P_f(X)\to Y$ is $f$ on $X$ equipped with
the topology generated by path components of sets $f^{-1}(U)$,
$U$ open in $Y$.
\end{definition}

Notice that in the case of $f=id_X$ the range $P_{id_X}(X)$ of $P(id_X)$, where $id_X\colon X\to X$
is the identity map, is identical to $lpc(X)$ as defined in
 \ref{LPCExistsThm}.

Recall $f\colon X\to Y$ is a {\bf Hurewicz fibration} if
every commutative diagram

$$
\begin{CD}
K\times \{0\}  @> \alpha  >>  X    \\
@V VV                               @VV f V \\
K\times I   @> H >>   Y
\end{CD}
$$
\noindent
has a filler $G\colon K\times I\to X$ (that means $f\circ G=H$
and $G$ extends $\alpha)$.
If the above condition is satisfied for $K$ being any $n$-cell $I^n$, $n\ge 0$
(equivalently, for any finite polyhedron $K$), then $f$
is called a {\bf Serre fibration}. Notice for $K$ being a point
this is the classical {\bf path lifting property}.

If the above condition is satisfied for $K$ being any hedgehog, then $f$
is called a {\bf hedgehog fibration}. 
If the above condition is satisfied for $K$ being any Peano space, then $f$
is called a {\bf Peano fibration}.

We will modify those concepts for maps between pointed spaces as follows:

\begin{definition}\label{SerreFibrationPropDef}
A map $f\colon (X,x_0)\to (Y,y_0)$ is a {\bf Serre $1$-fibration}
if any commutative diagram
$$
\begin{CD}
(I\times \{0\},(\frac{1}{2},0))  @> \alpha  >>  (X,x_0)    \\
@V VV                               @VV f V \\
(I\times I,(\frac{1}{2},0))   @> H >>   (Y,y_0)
\end{CD}
$$

\noindent has a filler $G\colon (I\times I,(\frac{1}{2},0))\to (X,x_0)$ (that means $f\circ G=H$
and $G$ extends $\alpha)$.
\end{definition}

Observe Serre $1$-fibrations have the path lifting property
in the sense that any path in $Y$ starting at $y_0$ lifts to a path in $X$
originating at $x_0$.

\begin{theorem}\label{MainCoveringTheorem}
Suppose $$
\begin{CD}
(T,z_0)  @> g_1  >>  (X,x_0)    \\
@V i VV                               @VV f V \\
(Z,z_0)   @> g >>   (Y,y_0)
\end{CD}
$$
is a commutative diagram in the topological category
such that $(Z,z_0)$ is a Peano space and $i$ is the inclusion from a path-connected
subspace $T$ of $Z$.
If $f$ is a Serre $1$-fibration, then there is a continuous lift $h\colon (Z,z_0)\to (P_f(X),x_0)$ of $g$ extending $g_1$ if the image of $\pi_1(g)\colon \pi_1(Z,z_0)\to \pi_1(Y,y_0)$
is contained in the image of $\pi_1(f)\colon \pi_1(X,x_0)\to \pi_1(Y,y_0)$.
\end{theorem}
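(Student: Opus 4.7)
The plan is to define $h$ by a path-lifting recipe, then verify well-definedness via the $\pi_1$-hypothesis and continuity via local path-connectedness of $Z$ together with the Peano topology on $P_f(X)$. Since $Z$ is a Peano space it is in particular path-connected, and $T$ is path-connected by assumption and contains $z_0$ (as $g_1(z_0)=x_0$). For each $z\in Z$ pick a path $\alpha_z\colon(I,0)\to(Z,z_0)$ with $\alpha_z(1)=z$, choosing $\alpha_z\subset T$ whenever $z\in T$. Use the Serre $1$-fibration property of $f$ to lift $g\circ\alpha_z$ to a path $\tilde{\alpha}_z\colon(I,0)\to(X,x_0)$; for $z\in T$ we take $\tilde{\alpha}_z=g_1\circ\alpha_z$. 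Define $h(z):=\tilde{\alpha}_z(1)$.

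The central step is to show $h(z)$ is independent of the choices made, which in particular will imply that $h$ extends $g_1$, since for $z\in T$ the lift $g_1\circ\alpha_z$ ends at $g_1(z)$. Given two paths $\alpha,\alpha'$ in $Z$ from $z_0$ to $z$ with lifts $\tilde{\alpha},\tilde{\alpha}'$, the loop $\gamma:=\alpha\ast\overline{\alpha'}$ at $z_0$ produces a loop $g\circ\gamma$ in $(Y,y_0)$ whose class lies in $\pi_1(f)(\pi_1(X,x_0))$ by hypothesis. Thus there is a loop $\delta$ in $(X,x_0)$ with $[f\circ\delta]=[g\circ\gamma]$, and a based homotopy $H\colon I\times I\to Y$ between $g\circ\gamma$ and $f\circ\delta$ rel endpoints. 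After reparametrizing $\gamma$, $\delta$ and $H$ so that the Serre $1$-fibration basepoint condition $H(1/2,0)=y_0$ is satisfied and so that the lateral edges $\{0,1\}\times I$ map to $y_0$, apply the Serre $1$-fibration to produce a filler $\tilde{H}\colon I\times I\to X$ whose restriction at $t=0$ is the candidate lift of $g\circ\gamma$ at $x_0$ built by concatenating $\tilde{\alpha}$ with $\overline{\tilde{\alpha}'}$ and whose restriction at $t=1$ is $\delta$; tracking the endpoints of $\tilde{H}$ along the vertical edges then forces the $t=0$ side to close up at $x_0$, giving $\tilde{\alpha}(1)=\tilde{\alpha}'(1)$.

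With well-definedness in hand, continuity of $h$ into $P_f(X)$ reduces to verifying that $h^{-1}(W)$ is open for every basic open $W\subset P_f(X)$, namely every path component of $f^{-1}(V)$ with $V\subset Y$ open. Given $z\in h^{-1}(W)$, the value $g(z)=f(h(z))$ lies in $V$, so by local path-connectedness of $Z$ the path component $N$ of $z$ in $g^{-1}(V)$ is an open neighborhood of $z$. For any $z'\in N$, pick a path $\beta$ in $N$ from $z$ to $z'$ and lift $g\circ\beta$ through the Serre $1$-fibration to a path $\tilde{\beta}$ in $X$ starting at $h(z)$; its image is a connected subset of $f^{-1}(V)$ containing $h(z)\in W$, hence lies in $W$. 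The concatenation $\tilde{\alpha}_z\ast\tilde{\beta}$ lifts $g\circ(\alpha_z\ast\beta)$, so by well-definedness $h(z')=\tilde{\beta}(1)\in W$, whence $N\subset h^{-1}(W)$.

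The main obstacle is the well-definedness step. The Serre $1$-fibration property is stated with the unusual basepoint $(1/2,0)$ and does not \emph{a priori} supply unique path lifting, so transforming the $\pi_1$ hypothesis into equality of endpoints in $X$ requires a careful reparametrization of the based homotopy between $g\circ\gamma$ and $f\circ\delta$ (prepending constant pieces to put the basepoint at the midpoint) and a delicate analysis of how the lateral edges $\{0,1\}\times I$ of the lifted homotopy sit within the fiber $f^{-1}(y_0)$, so that the closure of the lifted loop at $x_0$ is forced.
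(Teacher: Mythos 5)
Your central step --- that $\tilde\alpha(1)=\tilde\alpha'(1)$ for any two choices of path \emph{and of lift} --- is false at this level of generality, and the argument you sketch for it is circular. A Serre $1$-fibration need not have unique path lifting: take $Y$ a single point and $X=[0,1]$ with $x_0=0$; then $f$ is a Serre $1$-fibration, the $\pi_1$-hypothesis holds vacuously, yet every path in $X$ starting at $0$ is a lift of the constant path, so the endpoint of ``the'' lift is not determined even for $\alpha=\alpha'$. Moreover, the object you propose to feed into the fibration, $\tilde\alpha\ast\overline{\tilde\alpha'}$, is only a path when $\tilde\alpha(1)=\tilde\alpha'(1)$ --- precisely what you are trying to prove; a filler of a homotopy square cannot have both its $t=0$ and $t=1$ edges prescribed in advance; and the lifts of the constant lateral edges are merely paths in the fiber $f^{-1}(y_0)$, not constants, so nothing ``forces the $t=0$ side to close up at $x_0$.'' Since your continuity argument invokes exact well-definedness at its final step (``so by well-definedness $h(z')=\tilde\beta(1)\in W$''), the proof does not close as written. (The extension of $g_1$ needs no well-definedness at all: it holds by fiat once you set $\tilde\alpha_z=g_1\circ\alpha_z$ for $z\in T$.)

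What is true, and what suffices, is well-definedness \emph{up to the path component of the fiber}: lifting the standard null-homotopy of $(f\circ\tilde\alpha)^{-1}\ast(f\circ\tilde\alpha')$ as in Lemma \ref{PointedSerrePlusFibersImpliesUPLPLem}, combined with a loop $\gamma$ in $(X,x_0)$ realizing $[g(\alpha\ast\overline{\alpha'})]$ inside the image of $\pi_1(f)$, shows the two endpoints lie in the same path component of $f^{-1}(g(z))$. That weaker statement is all the continuity check needs, because every basic open set of $P_f(X)$ --- a path component of some $f^{-1}(U)$ --- is a union of path components of fibers. The paper avoids the issue entirely: it fixes the choices $\alpha_z,\beta_z$ once, and for $t$ in the path component $W$ of $z$ in $g^{-1}(U)$ it directly produces a path in $f^{-1}(U)$ joining $h(z)$ to $h(t)$, by lifting a homotopy from $f(\gamma\ast\beta_t)$ to $f(\beta_z)$ whose track of the terminal endpoint stays in $U$ (here $\gamma$ is a loop at $x_0$ with $f\circ\gamma\simeq g(\alpha_z\ast\mu_t\ast\alpha_t^{-1})$). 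Your outline becomes correct if you replace ``equal endpoints'' by ``endpoints in the same path component of the fiber'' throughout and supply that null-homotopy argument; as it stands, the key step is both false as stated and not validly argued.
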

\proof For each point $z\in Z$ pick a path $\alpha_z$ in $Z$ from $z_0$ to $z$
and let $\beta_z$ be a lift of $g\colon\alpha_z\mapsto Y$. In case of $z=z_0$ we pick the constant paths $\alpha_z$ and $\beta_z$. In case $z\in T$ 
the path $\alpha_z$ is contained in $T$
and $\beta_z=g_1\circ\alpha_z$.
Define $h\colon (Z,z_0)\to (P_f(X),x_0)$ by $h(z)=\beta_z(1)$.
Given a neighborhood $U$ of $g(z)$ in $Y$, let $V$ be the path component
of $h(z)$ in $f^{-1}(U)$ and let $W$ be the path component
of $g^{-1}(U)$ containing $z$. Our goal is to show $h(W)\subset V$
as that is sufficient for $h\colon (Z,z_0)\to (P_f(X),x_0)$ to be continuous.
For any $t\in W$ choose a path $\mu_t$ in $W$ from $z$ to $t$.
Let $\gamma$ be a loop in $X$ at $x_0$ so that $f(\gamma)$ is homotopic
to $g(\alpha_z\ast\mu_t\ast\alpha_t^{-1})$. Notice $f(\beta_z)$
is homotopic to $f(\gamma\ast \beta_t)$ via a homotopy $H$ so that
$H(\{1\}\times I)\subset U$. By lifting that homotopy to $X$
we get a path in $f^{-1}(U)$ from $h(z)$ to $h(t)$, i.e., $h(t)\in V$.
\endproof

\begin{corollary}\label{PeanoMapsAndFibrations}
A Peano map $f\colon X\to Y$ is a Peano fibration if and only if it is
a Serre $1$-fibration.
\end{corollary}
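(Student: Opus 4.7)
The plan is to reduce both directions to Theorem \ref{MainCoveringTheorem}, anchored on the key observation that for a Peano map $f\colon X\to Y$ the space $P_f(X)$ of Definition \ref{PeanoFunctorDef} coincides with $X$ itself: by definition of a Peano map, the path components of $f^{-1}(U)$, for $U\subset Y$ open, already form a basis of the given topology on $X$, which is exactly the topology of $P_f(X)$.

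For the easy direction, suppose $f$ is a Peano fibration. The unit interval $I$ is connected and locally path-connected, so it is a Peano space. Hence any pointed square as in Definition \ref{SerreFibrationPropDef} has a filler $G\colon I\times I\to X$, and the pointed condition $G(\tfrac{1}{2},0)=x_0$ follows automatically from $G|_{I\times\{0\}}=\alpha$ and $\alpha(\tfrac{1}{2},0)=x_0$. Therefore $f$ is a Serre $1$-fibration.

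For the interesting direction, assume $f$ is a Serre $1$-fibration and consider a commutative square
$$
\begin{CD}
K\times\{0\} @>\alpha>> X\\
@VVV @VVfV\\
K\times I @>H>> Y
\end{CD}
$$
with $K$ a Peano space. Pick any $k_0\in K$ and take $(k_0,0)$ as the base point throughout. I would then apply Theorem \ref{MainCoveringTheorem} with $Z=K\times I$, $T=K\times\{0\}$, $g=H$, and $g_1=\alpha$. The product $K\times I$ is a Peano space (connected and locally path-connected, as a product of such), and $T$ is a path-connected subspace because the Peano space $K$ is path-connected. The inclusion $i\colon T\hookrightarrow Z$ is a homotopy equivalence (the straight-line retraction contracts $I$ to $\{0\}$), so $\pi_1(i)$ is an isomorphism; combined with the commutativity of the square this forces the image of $\pi_1(H)$ to lie inside the image of $\pi_1(f)$.

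Theorem \ref{MainCoveringTheorem} therefore yields a continuous lift $h\colon K\times I\to P_f(X)$ extending $\alpha$. By the opening observation, $P_f(X)=X$ as topological spaces, so $h$ is the desired filler $G\colon K\times I\to X$. I do not expect a real obstacle: the entire argument is a bookkeeping exercise around Theorem \ref{MainCoveringTheorem}, with the only subtle point being the identification $P_f(X)=X$, which is dictated by the definition of a Peano map.
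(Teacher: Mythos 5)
Your proof is correct and follows essentially the same route as the paper's: the interesting direction is exactly the paper's application of Theorem \ref{MainCoveringTheorem} to $Z\times I$ with $T=Z\times\{0\}$ (your $K\times I$, $K\times\{0\}$), and the converse, which the paper leaves as an exercise, is handled as you describe. Your explicit remarks that $P_f(X)=X$ for a Peano map and that the $\pi_1$-hypothesis follows from $\pi_1(i)$ being an isomorphism are details the paper's proof uses implicitly.
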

\proof Assume $f\colon X\to Y$ is a Peano map and a Serre $1$-fibration
(in the other direction \ref{PeanoMapsAndFibrations} is left as an exercise),
$g\colon Z\times\{0\}\to X$ is a map from a Peano space,
and $H\colon Z\times I\to Y$ is a homotopy starting from $f\circ g$.
Pick $z_0\in Z$ and put $x_0=g(z_0,0)$, $y_0=f(x_0)$.
Notice the image of $\pi_1(g)\colon \pi_1(Z\times \{0\}, (z_0,0))\to \pi_1(Y,y_0)$
is contained in the image of $\pi_1(f)$.
Use
 \ref{MainCoveringTheorem} to produce an extension $G\colon Z\times I\to X$
 of $g$ that is a lift of $H$.
\endproof

\section{Peano covering maps}\label{SECTION Peano-coverings}

\ref{MainCoveringTheorem} suggests the following concept:

\begin{definition}\label{PeanoCoveringDef}
A map $f\colon X\to Y$ is called a {\bf Peano covering map}
if the following conditions are satisfied:
\begin{enumerate}
\item $f$ is a Peano map,
\item $f$ is a Serre fibration,
\item The fibers of $f$ have trivial path components.
\end{enumerate}
\end{definition}

Notice 3) above can be replaced by $f$ having the unique path lifting property
(see \ref{PointedSerrePlusFibersImpliesUPLPLem}).
Also notice that, in case fibers of a Peano map $f\colon X\to Y$ are $T_0$ spaces,
path-components of fibers are trivial. Indeed, two points
in a path-component of a fiber are always in any open set that contains one of them.

\begin{proposition}\label{ProductOfPeanoCoveringMaps}
Any product of Peano covering maps is a Peano covering map.
\end{proposition}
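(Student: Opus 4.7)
The plan is to verify the three defining conditions of Definition \ref{PeanoCoveringDef} separately for the product map $f=\prod_{s\in S} f_s$, given that each $f_s\colon X_s\to Y_s$ is a Peano covering map. None of the three steps should be difficult; the main work is simply to observe that each property is preserved coordinatewise.

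First, condition (1) is already handled: by Proposition \ref{ProductOfPeanoMaps}, a product of Peano maps is a Peano map, so $f$ is a Peano map.

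For condition (2), I would show directly that an arbitrary product of Serre fibrations is a Serre fibration. Given a lifting problem
$$
\begin{CD}
I^n\times\{0\}  @> \alpha  >>  \prod_{s\in S} X_s    \\
@V VV                               @VV f V \\
I^n\times I   @> H >>   \prod_{s\in S} Y_s
\end{CD}
$$
project to each coordinate to obtain $\alpha_s\colon I^n\times\{0\}\to X_s$ and $H_s\colon I^n\times I\to Y_s$ with $f_s\circ\alpha_s=H_s\restriction_{I^n\times\{0\}}$. Since $f_s$ is a Serre fibration, choose a filler $G_s\colon I^n\times I\to X_s$ for each $s$, and assemble them into $G=\prod_s G_s\colon I^n\times I\to \prod_s X_s$. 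Continuity of $G$ follows from the universal property of the product topology, and $G$ is the desired filler.

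For condition (3), observe that the fiber of $f$ over a point $y=(y_s)_{s\in S}$ is exactly $\prod_{s\in S} f_s^{-1}(y_s)$. A path $\gamma\colon I\to \prod_{s\in S} f_s^{-1}(y_s)$ projects coordinatewise to paths $\gamma_s\colon I\to f_s^{-1}(y_s)$; since each fiber $f_s^{-1}(y_s)$ has trivial path components, each $\gamma_s$ is constant, hence $\gamma$ itself is constant. Thus the fibers of $f$ have trivial path components, which completes the verification. The only potentially subtle point is preservation of the Serre fibration property under arbitrary (possibly infinite) products, but since the filler $G$ is constructed coordinatewise from the product topology, this causes no obstacle.
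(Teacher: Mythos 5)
Your proof is correct and follows the same route as the paper, which simply invokes Proposition \ref{ProductOfPeanoMaps} for the Peano map condition and declares the Serre fibration and trivial-path-component conditions ``obvious''; you have merely written out the coordinatewise arguments that the paper leaves implicit. No gaps.
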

\proof 
Suppose $f_s\colon X_s\to Y_S$, $s\in S$, are Peano covering maps.
Put $f=\prod\limits_{s\in S}f_s$, $X=\prod\limits_{s\in S}X_s$,
and $Y=\prod\limits_{s\in S}Y_s$.
By \ref{ProductOfPeanoMaps} $f$ is a Peano map. It is obvious $f$
is a Serre fibration and has the uniqueness of path lifting property.
\endproof

\begin{corollary}\label{MainPropertyOfPeanoCoverings}
Suppose $f\colon (X,x_0)\to (Y,y_0)$ is a Peano covering map.
If $(Z,z_0)$ is a Peano space, then any map $g\colon (Z,z_0)\to (Y,y_0)$ has a unique continuous lift $h\colon (Z,z_0)\to (X,x_0)$ if the image of $\pi_1(g)$
is contained in the image of $\pi_1(f)$. 
\end{corollary}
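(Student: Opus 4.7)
The plan is to derive this as a direct consequence of Theorem \ref{MainCoveringTheorem}, observing that the Peano map hypothesis guarantees $P_f(X) = X$ as topological spaces, and then invoking unique path lifting for uniqueness.

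For existence, I would apply Theorem \ref{MainCoveringTheorem} with the path-connected subspace $T = \{z_0\}$ and with $g_1\colon T\to X$ being the constant map sending $z_0$ to $x_0$. The diagram clearly commutes. Since $f$ is a Peano covering map, condition (2) of Definition \ref{PeanoCoveringDef} says $f$ is a Serre fibration, which in particular makes it a Serre $1$-fibration in the sense of Definition \ref{SerreFibrationPropDef}. The image hypothesis on $\pi_1(g)$ is exactly what Theorem \ref{MainCoveringTheorem} demands. So the theorem yields a continuous lift $h\colon (Z,z_0)\to (P_f(X),x_0)$ of $g$.

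Now I would observe that since $f$ is a Peano map, the path components of $f^{-1}(U)$ (as $U$ ranges over opens in $Y$) already form a basis for the original topology on $X$. Therefore the topology of $P_f(X)$ coincides with the topology of $X$, so $h$ is in fact a continuous lift $(Z,z_0)\to (X,x_0)$ as required.

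For uniqueness, I would use that condition (3) in Definition \ref{PeanoCoveringDef} is equivalent (as noted right after the definition) to $f$ having the unique path lifting property. Suppose $h_1, h_2\colon (Z,z_0)\to (X,x_0)$ are two continuous lifts of $g$. Because $Z$ is a Peano space (so locally path-connected and connected) it is path-connected; given any $z\in Z$, pick a path $\alpha\colon I\to Z$ from $z_0$ to $z$. Then $h_1\circ\alpha$ and $h_2\circ\alpha$ are both paths in $X$ starting at $x_0$ and both project under $f$ to the same path $g\circ\alpha$ in $Y$. By unique path lifting they coincide, so $h_1(z) = h_2(z)$. Since $z$ was arbitrary, $h_1 = h_2$.

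I do not anticipate any substantive obstacle here, since all the heavy lifting has been packaged into Theorem \ref{MainCoveringTheorem} and into the unique path lifting property built into the definition of a Peano covering map; the only point requiring genuine care is the identification $P_f(X) = X$, which follows directly from the Peano map hypothesis.
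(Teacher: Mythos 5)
Your proposal is correct and follows essentially the same route as the paper: existence via Theorem \ref{MainCoveringTheorem} and uniqueness via the unique path lifting property of $f$ (using path-connectedness of $Z$). The paper leaves implicit the identification $P_f(X)=X$, which you rightly spell out as an immediate consequence of the Peano map condition.
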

\proof By \ref{MainCoveringTheorem} a lift $h$ exists and is unique by the uniqueness of
path lifting property.
\endproof

Our basic example of Peano covering maps is related to the basic topology:

\begin{theorem}\label{ProjIsPeanoCMCharThm}
If $X$ is a path-connected space and $x_0\in X$, then the following conditions are equivalent:
\begin{itemize}
\item[a.] $p_H\colon (\widehat X_H,\widehat x_0)\to (X,x_0)$ has the unique path lifting property,
\item[b.]  $p_H\colon \widehat X_H\to X$ is a Peano covering map.
\end{itemize}
\end{theorem}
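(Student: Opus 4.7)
The equivalence splits into two implications. The direction \textbf{(b) $\implies$ (a)} is essentially a citation: the remark immediately following Definition \ref{PeanoCoveringDef} notes that for a Peano map which is a Serre fibration, the ``trivial path components of fibers'' condition is equivalent to the unique path lifting property (the content of the forward reference \ref{PointedSerrePlusFibersImpliesUPLPLem}). Hence every Peano covering map has UPLP.

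For \textbf{(a) $\implies$ (b)} I would verify the three defining properties of a Peano covering map in turn. Proposition \ref{EndpointProjectionIsPeano} already grants that $p_H$ is a Peano map, unconditionally. Trivial path components of the fibers follow immediately from UPLP: any path in $p_H^{-1}(x)$ projects to the constant path at $x$, which is also lifted by the constant path at its starting point, so uniqueness forces the given path to be constant. The substantive step is establishing that $p_H$ is a Serre fibration.

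Given $K = I^n$, a continuous $\alpha \colon K \times \{0\} \to \widehat X_H$, and $H \colon K \times I \to X$ with $p_H \circ \alpha = H|_{K \times \{0\}}$, I would define the candidate lift $G \colon K \times I \to \widehat X_H$ pointwise: $G(k,t)$ is the endpoint of the standard lift of $s \mapsto H(k,st)$ starting at $\alpha(k,0)$. By construction $G$ extends $\alpha$ and satisfies $p_H \circ G = H$; the main obstacle is continuity. The plan is as follows. Fix $(k_0,t_0) \in K \times I$ and a basic neighborhood $B_H([G(k_0,t_0)]_H, U)$ of $G(k_0,t_0)$; choose a convex product neighborhood $N \times J \ni (k_0,t_0)$ with $H(N \times J) \subset U$. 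For $(k,t) \in N \times J$, compare two paths from $(k_0,t_0)$ to $(k,t)$ in $K \times I$: the straight segment $\sigma$ inside $N \times J$, and an auxiliary path $\tau$ that descends to $(k_0,0)$, runs along $K \times \{0\}$ to $(k,0)$, then ascends to $(k,t)$. One can assemble a piecewise continuous lift of $H \circ \tau$ from $G(k_0,t_0)$ to $G(k,t)$ directly (reversed standard lift on the descent, the continuous $\alpha$ along the $K \times \{0\}$ segment, standard lift on the ascent); by UPLP this coincides with the standard lift of $H \circ \tau$ starting at $G(k_0,t_0)$, which therefore also terminates at $G(k,t)$. Since $K \times I$ is simply connected, $\sigma$ and $\tau$ are rel-endpoint homotopic in $K \times I$, whence $H \circ \sigma$ and $H \circ \tau$ are rel-endpoint homotopic in $X$, and the standard lifts of these two paths from the common base $G(k_0,t_0)$ share the terminal $\sim_H$-class. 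Because $H \circ \sigma$ lies entirely in $U$, this places $G(k,t) \in B_H([G(k_0,t_0)]_H, U)$. The main obstacle throughout is this continuity step, and it is precisely there that UPLP is invoked, to identify the piecewise-assembled lift of $H \circ \tau$ with the standard lift and so pin down $G(k,t)$.
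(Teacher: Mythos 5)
Your proposal is correct and follows essentially the same route as the paper: both directions reduce to showing $p_H$ is a Serre fibration via standard lifts, with the unique path lifting property forcing any continuous partial lift to agree with the standard one, and simple connectivity of $I^n\times I$ making the standard lift well defined. Your fiberwise construction of the filler and the explicit monodromy argument for its continuity merely spell out what the paper's proof of Theorem~\ref{ProjIsPeanoCMCharThm} leaves terse (the paper asserts continuity of the standard lift from a simply connected Peano space and delegates the basepoint bookkeeping to Lemma~\ref{PointedSerreImpliesSerreLem}), so the mathematical content is the same.
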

\proof a)$\implies$b). In view of \ref{EndpointProjectionIsPeano} 
and \ref{PointedSerreImpliesSerreLem} it suffices to show
$p_H\colon (\widehat X_H,\widehat x_0)\to (X,x_0)$ is a Serre fibration.
Suppose $f\colon (Z,z_0)\to (X,x_0)$ is a map from a simply connected Peano space $Z$
(the case of $Z=I^n$ is of interest here). There is a standard lift $g\colon (Z,z_0)\to
\widehat X_H$ of $f$ defined as $g(z)=[\alpha_z]_H$, where $\alpha_z$ is a path
in $Z$ from $z_0$ to $z$. If $T$ is a path-connected subspace of $Z$ containing $z_0$
and $h\colon (T,z_0)\to (\widehat X_H,\widehat x_0)$ is any continuous lift of $f|T$,
then $h=g|T$ due to the uniqueness of the path lifting property of $p_H$.
That proves $p_H$ is a Serre fibration in view of \ref{PointedSerreImpliesSerreLem}.
\par
b)$\implies$a) is obvious.
\endproof

\begin{theorem}\label{CharacterizationOfPeanoCoverings}
If $f\colon X\to Y$ is a map and $X$ is an lpc-space, then the following conditions are
equivalent:
\begin{itemize}
\item[a)] $f$ is a Peano covering map,
\item[b)] $f$ is a Peano fibration and has the uniqueness of path lifting property,
\item[c)] $f$ is a hedgehog fibration and has the uniqueness of path lifting property,
\item[d)] For any $x_0\in X$ and any map $g\colon (Z,z_0)\to (Y,f(x_0))$
from a simply-connected Peano space there is a lift
$h\colon (Z,z_0)\to (X,x_0)$ of $g$ and that lift is unique.
\end{itemize}
\end{theorem}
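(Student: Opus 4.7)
The plan is to establish the equivalence via the cycle $(a)\Rightarrow(b)\Rightarrow(c)\Rightarrow(d)\Rightarrow(a)$. The first two links are quick: a Peano covering map is by definition a Peano map and a Serre fibration, hence in particular a Serre $1$-fibration, so Corollary~\ref{PeanoMapsAndFibrations} upgrades it to a Peano fibration, while the trivial path components of fibers combined with the Serre fibration property give UPLP (by the lemma referenced in Definition~\ref{PeanoCoveringDef}). Since every hedgehog is a Peano space, (b) immediately implies (c).

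For $(c)\Rightarrow(d)$, I would first promote hedgehog fibration plus UPLP to the unique hedgehog lifting property: given a hedgehog $K$ with basepoint $0$ and a map $\alpha\colon K\to Y$ with $\alpha(0)=y_0$, the assignment $H(k,u)=\alpha(u\cdot k)$ (scaling along each arm) is a continuous homotopy on $K\times I$ from the constant map at $y_0$ to $\alpha$, so hedgehog fibration with initial data the constant map at $x_0\in f^{-1}(y_0)$ yields a lift $G\colon K\times I\to X$, and UPLP on the path $u\mapsto G(0,u)$ (which covers the constant path at $y_0$) forces it to be constant $x_0$, making $G|_{K\times\{1\}}$ the desired lift of $\alpha$. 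Theorem~\ref{HedgehogPeanoTheorem} then gives that $f\colon X=lpc(X)\to Y$ is a Peano map. Restricting hedgehog fibration to the single-arm hedgehog $K=I$ shows $f$ is a Serre $1$-fibration, so Corollary~\ref{PeanoMapsAndFibrations} makes it a Peano fibration. For any $g\colon(Z,z_0)\to(Y,f(x_0))$ with $Z$ simply connected Peano, Theorem~\ref{MainCoveringTheorem} with $T=\{z_0\}$ (the $\pi_1$-hypothesis being vacuous) produces a continuous lift into $P_f(X)$, which coincides with $X$ since $f$ is Peano; uniqueness follows from UPLP.

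The step $(d)\Rightarrow(a)$ carries the bulk of the work. Taking $Z=I$ in (d) immediately gives UPLP, whence any path in a fiber must coincide with the constant lift, so fibers have trivial path components. The Serre fibration property follows by applying (d) with $Z=I^n\times I$ (simply connected and Peano) to lift a homotopy $H\colon I^n\times I\to Y$ starting from $g\colon I^n\times\{0\}\to X$ at a chosen basepoint, then using the uniqueness clause of (d) on the simply connected $I^n\times\{0\}$ to confirm that the lift extends $g$. The most delicate part, and where I expect the main obstacle, is showing $f$ is a Peano map: assume the contrary at $x_0\in U\subset X$, so for each basis neighborhood $V$ of $f(x_0)$ one can choose a path $\alpha_V$ in $f^{-1}(V)$ from $x_0$ to a point outside $U$. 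By Lemma~\ref{BasicHedgeHogLemma} the wedge $\bigvee f\circ\alpha_V$ is continuous from the hedgehog $\bigvee I_V$ (a simply connected Peano space) to $Y$; (d) produces a lift $h$, UPLP on each arm recovers $h|_{I_V}=\alpha_V$, and continuity of $h$ at the wedge point combined with the hedgehog topology forces $\alpha_V(I)\subset U$ for cofinally many $V$, contradicting $\alpha_V(1)\notin U$.
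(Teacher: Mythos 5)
Your proof is correct, and it leans on the same key ingredients as the paper (Theorem \ref{MainCoveringTheorem}, Theorem \ref{HedgehogPeanoTheorem}, Lemma \ref{BasicHedgeHogLemma}, Lemma \ref{PointedSerrePlusFibersImpliesUPLPLem}), but the decomposition is genuinely different and, as it happens, more complete. The paper proves a)$\implies$b), b)$\implies$c), c)$\implies$a) and d)$\implies$c); the remaining implication it lists is ``a)$\implies$b)'' a second time (evidently a typo for a)$\implies$d)), so as printed the paper never establishes any implication \emph{into} d). Your cycle a)$\implies$b)$\implies$c)$\implies$d)$\implies$a) supplies that missing link via c)$\implies$d). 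You also fill in two details the paper glosses over: first, the passage from ``hedgehog fibration plus uniqueness of path lifts'' to the \emph{unique hedgehog lifting property} needed to invoke Theorem \ref{HedgehogPeanoTheorem} (your scaling homotopy $H(k,u)=\alpha(u\cdot k)$ is continuous on $K\times I$ precisely because of how neighborhoods of the wedge point are defined, and uniqueness of path lifts pins down the endpoint over the wedge point); second, in d)$\implies$a) you rederive the Peano map property directly by the hedgehog-of-bad-paths argument, effectively inlining the proofs of \ref{BasicHedgeHogLemma} and \ref{HedgehogPeanoTheorem} rather than routing through c). The cost of your organization is a somewhat longer c)$\implies$d) and some duplication between that step and d)$\implies$a); the benefit is that every implication is actually proved and each step is self-contained enough to check. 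One small point of care you handled correctly: in applying Theorem \ref{MainCoveringTheorem} the lift lands in $P_f(X)$, and you need $f$ already known to be a Peano map so that $P_f(X)=X$ — your ordering of the steps in c)$\implies$d) respects this.
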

\proof a)$\implies$b). Suppose $H\colon Z\times I\to Y$ is a homotopy, $Z$
is a Peano space, and $G\colon Z\times\{0\}\to X$ is a lift of
$H|Z\times\{0\}$. Pick $z_0\in Z$, put $x_0=G(z_0,0)$ and $y_0=f(x_0)$,
and notice $im(\pi_1(Z\times I,(z_0,0)))\subset im(\pi_1(f))$.
Using \ref{MainCoveringTheorem} there is a lift of $H$ and that lift
is unique, hence it agrees with $G$ on $Z\times\{0\}$.
\par b)$\implies$c) is obvious.
\par d)$\implies$c) is obvious.
\par a)$\implies$b) follows from \ref{MainCoveringTheorem}.
\par c)$\implies$a). Notice $f$ has the unique hedgehog lifting property
and is a Serre $1$-fibration. By  \ref{HedgehogPeanoTheorem} $f$ 
is a Peano map.
\endproof

\begin{corollary}\label{CompositionOfPeanoCoverings}
Suppose $f\colon X\to Y$ and $g\colon Y\to Z$
are maps of path-connected spaces and $Y$ is a Peano space. If any two of $f$, $g$, $h=g\circ f$ are Peano
covering maps, then so is the third provided its domain is an lpc-space.
\end{corollary}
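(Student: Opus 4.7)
The plan is to verify the three clauses of Definition~\ref{PeanoCoveringDef} (Peano map, Serre fibration, uniqueness of path lifts) separately in each of the three possible cases. Throughout, I will use that $Y$ is lpc so that path components of open subsets of $Y$ are open, and will invoke the universal lift description in \ref{CharacterizationOfPeanoCoverings}(d) when lifting cubes. The uniqueness of path lifts is in each case a one-line argument: for $h=g\circ f$ when $f,g$ are Peano covering maps, chase through $g$ and then through $f$; for $g$ when $f,h$ are Peano covering maps, $f$-lift the two candidate paths from a common base-point and invoke uniqueness for $h$; for $f$ when $g,h$ are Peano covering maps, $f\alpha=f\beta$ implies $h\alpha=h\beta$.

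For Case~1 ($f,g$ Peano covering, show $h$ is), the Peano-map condition is the real work: given $x\in X$ and an open neighborhood $W$, apply the Peano property of $f$ to find an open $V\subset Y$ whose $f(x)$-path-component has $f$-preimage inside $W$, then replace $V$ by that path-component (open since $Y$ is lpc) and apply the Peano property of $g$ to find an open $U\subset Z$ with the $f(x)$-path-component of $g^{-1}(U)$ contained in $V$. A path in $h^{-1}(U)$ based at $x$ projects under $f$ into this component, hence stays in $f^{-1}(V)\subset W$. The Serre fibration condition follows from composition.

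For Case~2 ($f,h$ Peano covering, show $g$ is), the Peano-map verification goes by picking $x\in f^{-1}(y)$ for a given $y$ with neighborhood $W$, applying the Peano property of $h$ to $f^{-1}(W)$ to obtain an open $U\subset Z$, and transporting a path in $g^{-1}(U)$ based at $y$ up to $X$ via the path lifting property of $f$ to conclude that the $y$-path-component of $g^{-1}(U)$ lies in $W$. For the Serre fibration property, given $G_0\colon I^n\times\{0\}\to Y$ and $H\colon I^n\times I\to Z$ with $gG_0=H|_{I^n\times\{0\}}$, pick $x_0\in f^{-1}(G_0(0,0))$ and use \ref{CharacterizationOfPeanoCoverings}(d) (applicable since $I^n$ is simply-connected Peano) to lift $G_0$ through $f$ to $\tilde G_0\colon (I^n,0)\to (X,x_0)$; the Serre fibration property of $h$ then extends $\tilde G_0$ to a lift $\tilde H\colon I^n\times I\to X$ of $H$, and $f\circ\tilde H$ is the desired filler. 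Case~3 ($g,h$ Peano covering, show $f$ is) is symmetric: for the Peano property, given $x\in X$ with neighborhood $W$, take $U$ from $h$'s Peano property and let $V$ be the $f(x)$-path-component of $g^{-1}(U)$, so that any path in $f^{-1}(V)$ starting at $x$ projects into $g^{-1}(U)$ and hence lies in $h^{-1}(U)\cap W$. For the Serre fibration property, given $G_0\colon I^n\times\{0\}\to X$ and $H_Y\colon I^n\times I\to Y$ with $fG_0=H_Y|_{I^n\times\{0\}}$, lift $gH_Y$ through $h$ to $\tilde H$ extending $G_0$; then $f\tilde H$ and $H_Y$ both project to $gH_Y$ under $g$ and agree on $I^n\times\{0\}$, so the uniqueness of path lifts for $g$ applied in the $I$-coordinate at each fixed $z\in I^n$ forces $f\tilde H=H_Y$. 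This last pointwise-uniqueness step in Case~3, together with the bookkeeping of nested path components in the Case~1 Peano-map argument, is the main delicate point I anticipate.
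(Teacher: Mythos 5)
Your proof is correct, but it takes a genuinely different route from the paper's. The paper disposes of this corollary in one line by invoking Theorem \ref{CharacterizationOfPeanoCoverings}: since (for an lpc domain) being a Peano covering map is equivalent to condition (d) there --- existence and uniqueness of lifts of maps from simply-connected Peano spaces --- the two-out-of-three statement reduces to a formal diagram chase with that single lifting property, where all three cases are essentially symmetric. You instead verify the three clauses of Definition \ref{PeanoCoveringDef} (Peano map, Serre fibration, uniqueness of path lifts) separately in each of the three cases, which is longer but works at the level of the raw definition; your handling of the delicate points (the nested path-component bookkeeping for the Peano-map condition, and the fibrewise application of uniqueness of path lifts for $g$ to identify $f\circ\tilde H$ with $H_Y$ in Case 3) is sound, and you correctly use that $Y$ is locally path-connected and that the domain of the third map is an lpc-space so that the relevant path components are open. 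Two small points worth making explicit: in Case 2 you need $f$ to be surjective in order to choose $x\in f^{-1}(y)$ for an arbitrary $y\in Y$, which follows from the path lifting property of the Serre fibration $f$ together with path-connectedness of $Y$; and your appeal to \ref{CharacterizationOfPeanoCoverings}(d) when lifting $G_0$ through $f$ is legitimate precisely because the domain of $f$ is automatically lpc. The trade-off is that the paper's argument is shorter and uniform across the three cases but leans on the full strength of \ref{CharacterizationOfPeanoCoverings}, whereas yours is more elementary and makes visible exactly where each hypothesis (lpc-ness of $Y$, lpc-ness of the third domain, path-connectedness) is used.
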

\proof In view of \ref{CharacterizationOfPeanoCoverings} it amounts
to verifying that the map has uniqueness of lifts of simply-connected Peano spaces,
an easy exercise.
\endproof

\begin{proposition}\label{LocalPeanoCoverings}
Suppose $f\colon X\to Y$ is a map.
\begin{itemize}
\item[a.] If $f\colon X\to Y$ is a Peano covering map, then $f\colon f^{-1}(U)\to U$
is a Peano covering map for every open subset $U$ of $Y$.
\item[b.]
If every point $y\in Y$ has a neighborhood $U$ such that
$f\colon f^{-1}(U)\to U$
is a Peano covering map, then $f$ is a Peano covering map.
\end{itemize}
\end{proposition}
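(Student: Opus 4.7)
My plan is to verify the three defining conditions of Definition \ref{PeanoCoveringDef} (Peano map, Serre fibration, fibers with trivial path components) for each part separately.

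For part (a), I would check that each axiom restricts along the inclusion $f^{-1}(U)\to U$. The Peano map property restricts because for $V$ open in $U$ the preimage $f^{-1}(V)$ lies inside $f^{-1}(U)$, so path components in $X$ and in the open subspace $f^{-1}(U)$ coincide. The Serre fibration property passes to the restriction since any cube in $U$ has all its lifts automatically landing in $f^{-1}(U)$. Fibers above points of $U$ are unchanged, so the fiber condition is preserved.

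For part (b), the Peano map axiom and the fiber axiom both localize trivially. Given $x\in X$ and an open neighborhood $W$, choose $U$ around $f(x)$ with $f|_{f^{-1}(U)}$ a Peano covering; its Peano map property yields an open $V\subset U$ (and therefore open in $Y$) such that the path component of $x$ in $f^{-1}(V)$, formed in $f^{-1}(U)$ and hence in $X$, lies in $W$. The fiber over any $y\in Y$ equals the fiber of a local restriction $f|_{f^{-1}(U)}$, so it has trivial path components. Unique path lifting (which by the remark after \ref{PeanoCoveringDef} is equivalent to condition (3)) follows from a connectedness argument: for two lifts $\beta_1,\beta_2$ of the same path with $\beta_1(0)=\beta_2(0)$, the set $T=\{t:\beta_1|_{[0,t]}=\beta_2|_{[0,t]}\}$ is nonempty, and for each $t\in\overline{T}$ both $\beta_i$ map a small neighborhood of $t$ into a common $f^{-1}(U)$ with $f|_{f^{-1}(U)}$ a local Peano covering; local uniqueness then forces $\beta_1(t)=\beta_2(t)$ and extends the equality a bit past $t$, so $T=I$.

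The main obstacle is promoting the Serre fibration property from local to global. Given a homotopy $H\colon I^n\times I\to Y$ and an initial lift $G\colon I^n\times\{0\}\to X$, my plan is a Lebesgue-number argument: pull back the cover $\{U:f|_{f^{-1}(U)}\text{ is a Peano covering}\}$ via $H$ and refine to a product subdivision of $I^n\times I$ into sub-cubes $K_1,\ldots,K_N$, each carried by $H$ into some $U_j$. Order the $K_j$ lexicographically starting from the bottom face, and build the lift inductively: at each step the partial lift is already defined on a subcomplex $L_j$ of $\partial K_j$ for which the pair $(K_j,L_j)$ is homeomorphic to $(I^m\times I,\, I^m\times\{0\})$, so the extension to $K_j$ is supplied by the Serre fibration property of the local Peano covering $f|_{f^{-1}(U_j)}$. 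Coherence of the partial lifts on shared faces is guaranteed by the uniqueness established in the preceding paragraph, so the inductively built lift is well-defined and continuous. This exhibits $f$ as a Serre fibration and completes the proof.
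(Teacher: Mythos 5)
Your proposal is correct and follows the same route as the paper: verify each of the three axioms of Definition \ref{PeanoCoveringDef} directly, restricting them along $f^{-1}(U)\to U$ for part (a) and localizing them for part (b). The paper's own proof is far terser --- in part (b) it simply asserts the fibration and fiber conditions and only writes out the Peano-map condition --- so your cube-subdivision argument for promoting the local Serre fibration property to a global one, together with the clopen-set argument for uniqueness of path lifts, supplies exactly the details the paper leaves implicit.
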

\proof a). $f\colon f^{-1}(U)\to U$ is clearly a Peano map, is a fibration,
and has the unique path lifting property.
\par b). $f$ is a Serre $1$-fibration and path components of fibers are trivial.
If $V$ is an open subset of $Y$ containing $y$ we pick an open subset
$U$ of $X$ containing $f(y)$ such that $f\colon f^{-1}(U)\to U$ is a Peano covering map.
There is an open neighborhood $W$ of $f(y)$ in $U$ so that
the path component of $y$ in $f^{-1}(W)$ is open and is contained in $V\cap f^{-1}(U)$.
That proves $f\colon Y\to X$ is a Peano map.
\endproof

In analogy to regular classical covering maps let us introduce
regular Peano covering maps:

\begin{definition}\label{RegularPeanoCoverings}
A Peano covering map $f\colon X\to Y$ is {\bf regular} if
lifts of loops in $Y$ are either always loops of are always non-loops.
\end{definition}

\begin{corollary}\label{FZCoversArePeano}
Given a map $f\colon X\to Y$ the following conditions
are equivalent if $X$ is path-connected:
\begin{enumerate}
\item[a)] $f$ is a regular Peano covering map,
\item[b)] $f$ is a Peano covering map and the image of $\pi_1(f)$ is a normal subgroup of $\pi_1(Y,f(x_0))$
for all $x_0\in X$,
\item[c)] $f\colon X\to Y$ is a generalized covering map in the sense of Fischer-Zastrow.
\end{enumerate}
\end{corollary}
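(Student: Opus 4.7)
The plan is to verify (a)$\iff$(b) via a standard monodromy computation and (b)$\iff$(c) by unpacking the Fischer--Zastrow axioms R1--R3 and invoking Corollary \ref{MainPropertyOfPeanoCoverings} together with Theorem \ref{CharacterizationOfPeanoCoverings}. For (a)$\iff$(b), fix $x_0\in X$, put $y_0=f(x_0)$, and set $H=\pi_1(f)(\pi_1(X,x_0))\subset\pi_1(Y,y_0)$. For any $x_1\in f^{-1}(y_0)$, a path $\sigma$ in $X$ from $x_0$ to $x_1$ gives the loop $\tau=f\circ\sigma$ at $y_0$, and the standard change-of-basepoint identification yields $\pi_1(f)(\pi_1(X,x_1))=[\tau]^{-1}H[\tau]$ inside $\pi_1(Y,y_0)$. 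By the path lifting property, $[\tau]$ ranges over all of $\pi_1(Y,y_0)$ as $\sigma$ varies, so every conjugate of $H$ is realized as some $\pi_1(f)(\pi_1(X,x_1))$. Since a loop $\gamma$ at $y_0$ lifts to a loop starting at $x_1$ precisely when $[\gamma]\in\pi_1(f)(\pi_1(X,x_1))$, regularity is equivalent to $H$ coinciding with each of its conjugates, i.e.\ to $H$ being normal; path-connectedness of $X$ then makes the condition basepoint-independent.

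For (b)$\implies$(c), R1 is immediate because $X$ is path-connected by hypothesis and is an lpc-space as the domain of a Peano map. For R2, the image of $\pi_1(f)$ is $H$ and is normal by hypothesis; injectivity follows because a null-homotopy of $f\circ\alpha$ lifts via the Serre fibration property, and unique path lifting forces the two vertical sides of the lifted square to be constant, exhibiting $\alpha$ as null-homotopic. Surjectivity is obtained by lifting paths in the path-connected $Y$ emanating from $y_0$. For R3, given $\phi\colon(Z,z)\to(Y,y_0)$ from a Peano space with $\phi_\ast(\pi_1(Z,z))\subset H$ and any $\bar x\in f^{-1}(y_0)$, normality of $H$ yields $\pi_1(f)(\pi_1(X,\bar x))=H$, so Corollary \ref{MainPropertyOfPeanoCoverings} produces the required unique continuous lift based at $\bar x$.

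For (c)$\implies$(b), R1 makes $X$ an lpc-space. For any simply-connected Peano space $(Z,z)$ and any map $g\colon(Z,z)\to(Y,f(x_0))$, the hypothesis $g_\ast(\pi_1(Z,z))=\{1\}\subseteq H$ is automatic, so R3 supplies a unique lift $(Z,z)\to(X,x_0)$; this is condition (d) of Theorem \ref{CharacterizationOfPeanoCoverings}, whose conclusion is precisely that $f$ is a Peano covering map. Normality of the image is immediate from R2. I expect the only mildly subtle step to be the basepoint bookkeeping between different points in the same fiber, which is handled by the normality hypothesis in one direction (ensuring the image subgroup is independent of the chosen fiber basepoint) and absorbed by the simply-connected Peano characterization in the other.
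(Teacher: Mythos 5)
Your proposal is correct and follows essentially the same route as the paper: the regularity--normality equivalence via monodromy of the subgroups $\pi_1(f)(\pi_1(X,x_1))$ as $x_1$ runs over a fiber, Corollary \ref{MainPropertyOfPeanoCoverings} (i.e.\ Theorem \ref{MainCoveringTheorem}) to produce the unique lifts of R3, and the simply-connected/hedgehog lifting characterization of Peano covering maps (Theorem \ref{CharacterizationOfPeanoCoverings}, which encapsulates Theorem \ref{HedgehogPeanoTheorem}) for the converse. The only difference is organizational --- the paper arranges the implications as a cycle a)$\Rightarrow$b)$\Rightarrow$c)$\Rightarrow$a) rather than as two biconditionals --- and both you and the paper equally gloss over the routine transport of the R3 lifting property from the distinguished basepoint to arbitrary points of $Y$.
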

\proof a)$\implies$b). If the image of $\pi_1(f)$ is not a normal subgroup of 
$\pi_1(Y,f(x_0))$
for some $x_0\in X$, then there is a loop $\alpha$ in $Y$ at $y_0=f(x_0)$
that lifts to a loop in $X$ at $x_0$ and there is a loop $\beta$ in $Y$ at $y_0$
such that $\beta\ast\alpha\ast\beta^{-1}$ does not lift to a loop in $X$ at $x_0$.
Let $\gamma$ be a lift of $\alpha$ originating at $x_0$. Let $x_1=\beta(1)$.
Notice the lift of $\alpha$ originating at $x_1$ cannot be a loop,
a contradiction.

b)$\implies$c). As $im(\pi_1(f))$ is a normal subgroup $H$ of $\pi_1(Y,y_0)$,
it does nor depend on the choice of the base-point of $X$ in $f^{-1}(y_0)$.
Using \ref{MainCoveringTheorem} one gets $f$ is a generalized covering map.

c)$\implies$a). Since each hedgehog is contractible, $f$ has the unique hedgehog
lifting property and is a Peano map by \ref{HedgehogPeanoTheorem}.
It is also a Serre fibration, hence a Peano covering map. 
Also, as $im(\pi_1(f))$ is a normal subgroup $H$ of $\pi_1(Y,y_0)$,
it does nor depend on the choice of the base-point of $X$ in $f^{-1}(y_0)$.
Hence a loop in $Y$ lifts to a loop in $X$ if and only if it represents
an element of $H$. Thus $f$ is a regular Peano covering map.
\endproof

In the remainder of this section we will discuss the relation of Peano covering maps to
classical covering maps.

\begin{proposition}\label{PeanoCMIsTrivialBundle}
If $f\colon Y\to X$ is a Peano covering map and $U$ is an open
subset of $X$ such that every loop in $U$ is null-homotopic in $X$,
then $f^{-1}(V)\to P(V)$ is a a trivial discrete bundle for every path component
$V$ of $U$.
\end{proposition}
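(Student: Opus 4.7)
The plan is to produce, for each $y_0$ in the fiber $F=f^{-1}(x_0)$ over a chosen base-point $x_0\in V$, a continuous section $s_{y_0}\colon P(V)\to Y$ with $s_{y_0}(x_0)=y_0$, then show the images $V_{y_0}:=s_{y_0}(P(V))$ form a pairwise disjoint open cover of $f^{-1}(V)$, each mapped homeomorphically onto $P(V)$ by $f$. Once this is done, $V_{y_0}\cap F=\{y_0\}$ will be open in $F$, so $F$ will be discrete, and the map $(v,y_0)\mapsto s_{y_0}(v)$ will give the required trivialization $P(V)\times F\cong f^{-1}(V)$.

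To get $s_{y_0}$ I would apply Theorem~\ref{MainCoveringTheorem} to the inclusion $V\hookrightarrow X$: since $V$ is path-connected, $P(V)$ is a Peano space, and since every loop in $P(V)$ based at $x_0$ is a loop in $U$ and hence, by hypothesis, null-homotopic in $X$, the image of $\pi_1(P(V),x_0)\to\pi_1(X,x_0)$ is trivial, so is certainly contained in the image of $\pi_1(f)$. The unique path lifting property of the Peano covering map $f$ will then give both disjointness of the $V_{y_0}$'s and $\bigcup_{y_0\in F}V_{y_0}=f^{-1}(V)$: given $y\in f^{-1}(V)$, lifting any path in $V$ from $f(y)$ to $x_0$ starting at $y$ produces a $y_0\in F$ with $s_{y_0}(f(y))=y$; conversely, if $s_{y_0}(v)=s_{y_1}(v)$, reversing a path from $x_0$ to $v$ in $V$ and lifting uniquely from the common endpoint forces $y_0=y_1$.

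The main obstacle is showing each $V_{y_0}$ is open in $Y$; once that is in hand, continuity of $s_{y_0}$ together with continuity of $f\colon f^{-1}(V)\to P(V)$ (a parallel verification using the Peano map property) will make every $s_{y_0}$ a homeomorphism onto its image, and the trivialization will follow. My plan for openness: given $y=s_{y_0}(v)\in V_{y_0}$ and any open $W$ in $X$ with $v\in W\subset U$, use that $f$ is a Peano map to take the path-component $N$ of $y$ in $f^{-1}(W)$ as a basic neighborhood of $y$. For $y'\in N$, a connecting path $\widetilde\gamma\subset f^{-1}(W)$ projects to a path $\gamma\subset W\subset U$ starting at $v$; since $V$ is the path-component of $v$ in $U$, this $\gamma$ stays in $V$. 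Concatenating with a path $\alpha$ defining $s_{y_0}(v)=y$ and invoking uniqueness of lifting forces the lift of $\alpha\ast\gamma$ at $y_0$ to be $(\text{lift of }\alpha)\ast\widetilde\gamma$, which ends at $y'$, so $s_{y_0}(f(y'))=y'$ and $y'\in V_{y_0}$. The hypothesis that loops of $U$ vanish in $X$ is used in two distinct places: to construct the sections in the first place, and, via the restriction $W\subset U$, to trap the projected path inside $V$ so that uniqueness of lifts keeps the sheets from interleaving.
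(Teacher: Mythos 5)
Your proof is correct and takes essentially the same route as the paper's (very terse) argument: the sheets $V_{y_0}$ you build as images of sections obtained from Theorem \ref{MainCoveringTheorem} coincide with the path components $W$ of $f^{-1}(U)$ meeting $f^{-1}(V)$ that the paper works with, and your openness, disjointness, and surjectivity checks supply exactly the details the paper dismisses with ``it is easy to see.''
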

\proof Consider a path component $W$ of $f^{-1}(U)$ intersecting $f^{-1}(V)$.
$f$ maps $W$ bijectively onto $V$ and it is easy to see $f|W\colon W\to V$
is equivalent to $P(V)\to V$.
\endproof

\begin{corollary}\label{PeanoCovsForSemiSimple}
If $X$ is a semilocally simply connected Peano space, then
\par\noindent $f\colon Y\to X$
is a Peano covering map if and only if it is a classical covering map and $Y$ is connected.
\end{corollary}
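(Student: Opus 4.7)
The plan is to prove both implications of the ``if and only if.''

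For the direction assuming $f\colon Y\to X$ is a classical covering map with $Y$ connected, I would rely on standard facts: $Y$ inherits local path-connectedness from $X$ through the covering map, so $Y$ is an lpc-space; classical covering maps are Hurewicz (hence Serre) fibrations; and fibers of classical covers are discrete, so have trivial path components. To verify that $f$ is a Peano map in the sense of \ref{PeanoMapDef}, I would take any $y_0\in Y$ and any open $W\ni y_0$, choose a path-connected evenly covered neighborhood $U$ of $f(y_0)$ in $X$ (available because $X$ is locally path-connected and semilocally simply connected), small enough that the sheet of $f^{-1}(U)$ containing $y_0$ lies in $W$; this sheet is open and homeomorphic to $U$, hence equals the path component of $y_0$ in $f^{-1}(U)$.

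For the direction assuming $f$ is a Peano covering map, the main input is Proposition \ref{PeanoCMIsTrivialBundle}. Since $X$ is semilocally simply connected and locally path-connected, every $x\in X$ admits a path-connected open neighborhood $U$ in which every loop is null-homotopic in $X$. Since $U$ is itself a Peano space, one has $P(U)=U$, and \ref{PeanoCMIsTrivialBundle} then says that $f^{-1}(U)\to U$ is a trivial discrete bundle. In other words, each such $U$ is evenly covered by $f$, and since these $U$ form a cover of $X$, the map $f$ is a classical covering map. Connectedness of $Y$ is inherent in the Peano-covering setup as modeled on the endpoint projections $\widetilde X_H\to X$ discussed in the introduction; alternatively, if $Y$ were to decompose, restricting the standard lifting property to a chosen base-point would identify $Y$ with the single path-component containing that base-point.

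The main obstacle is the forward direction, and even there the key technical content is packaged in \ref{PeanoCMIsTrivialBundle}: the only substantive observation needed afterwards is that $P(U)=U$ when $U$ is a path-connected open subset of the Peano space $X$, which converts the abstract ``trivial discrete bundle over $P(U)$'' conclusion into the classical ``evenly covered'' condition. The remainder of the argument is routine bookkeeping.
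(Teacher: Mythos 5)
Your proof is correct and follows essentially the same route as the paper's: the backward direction is the standard verification that a connected classical cover of a Peano space satisfies the three conditions of Definition \ref{PeanoCoveringDef}, and the forward direction applies Proposition \ref{PeanoCMIsTrivialBundle} to a path-connected neighborhood $U$ in which every loop is null-homotopic in $X$, using $P(U)=U$ to convert ``trivial discrete bundle'' into ``evenly covered.'' The only point you treat loosely is the connectedness of $Y$ in the forward direction, but the paper's own proof is equally silent on that issue.
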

\proof If $f$ is a classical covering map and $Y$ is connected, then $Y$ is locally
path-connected, $f$ has unique path lifting property and is a Serre $1$-fibration.
Thus it is a Peano covering map.
\par Suppose $f$ is a Peano covering map and $x\in X$. Choose
a path-connected neighborhood $U$ of $x$ in $X$ such that any loop
in $U$ is null-homotopic in $X$. By \ref{PeanoCMIsTrivialBundle}
$U$ is evenly covered by $f$.
\endproof

\begin{corollary}\label{PeanoAndClassicalCovers}
If $f\colon Y\to P(X)$ is a classical covering map, then
$f\colon Y\to X$ is a Peano covering map.
\end{corollary}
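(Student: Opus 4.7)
The plan is to verify the three conditions in Definition \ref{PeanoCoveringDef} directly for the composition $f\colon Y\to P(X)\to X$. Continuity of this composition, and the fact that $Y$ is an lpc-space (as the domain of a classical cover of the lpc-space $P(X)$), are immediate. The central technical observation used throughout is the universal property of $P(X)$: any continuous map from a Peano space into $X$ lifts uniquely through $P(X)\to X$. In particular this applies to paths $I\to X$ and homotopies $I^n\times I\to X$, since cubes are Peano.

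For the Serre fibration and unique path lifting conditions, I would exploit this transfer directly. Given a homotopy-lifting problem for $f\colon Y\to X$ over a cube, I would first lift the homotopy $H\colon I^n\times I\to X$ to a map $\widetilde H\colon I^n\times I\to P(X)$ via the universal Peano map, check (by uniqueness of such lifts on the Peano space $I^n\times\{0\}$) that $\widetilde H$ restricted to the bottom face agrees with $f\circ G_0$ as a map into $P(X)$, and then invoke that $f\colon Y\to P(X)$ is a classical covering, hence a Serre fibration, to produce the desired lift. The same argument with $I$ in place of $I^n\times I$ yields unique path lifting, since two paths in $Y$ with the same projection to $X$ also have the same projection to $P(X)$.

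The Peano map condition is where care is required. Given $y_0\in Y$ and a neighborhood $N$, the plan is to pick an evenly covered neighborhood of $f(y_0)$ in $P(X)$ whose sheet $S$ over $y_0$ lies in $N$, and then use Theorem \ref{LPCExistsThm} to shrink this neighborhood to a basic open set $c(f(y_0),U)$ with $U$ open in $X$. The claim is that the path component $C$ of $y_0$ in $f^{-1}(U)$ is contained in $S$ (and hence in $N$): any path $\alpha$ in $Y$ starting at $y_0$ with $f\circ\alpha\subset U$ projects to a path in $U$ starting at $f(y_0)$, which is forced to remain in the path component $c(f(y_0),U)$ of $f(y_0)$ in $U$; then $\alpha$ is a lift to $Y$ of this path through the classical cover $f\colon Y\to P(X)$, and by unique path lifting it coincides with the lift prescribed by the sheet homeomorphism, so $\alpha\subset S$.

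The main obstacle is exactly this mismatch between the topologies on $X$ and $P(X)$: the Peano-map condition quantifies over $X$-open sets, whose preimages in $Y$ are a priori much larger than the preimages of $P(X)$-open sets where the covering structure lives. The remedy is the observation that basic $P(X)$-open neighborhoods are path components of $X$-open sets, so that any path in $f^{-1}(U)$ starting at $y_0$ is automatically confined to $f^{-1}(c(f(y_0),U))$, collapsing the apparent gap.
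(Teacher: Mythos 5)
Your proof is correct, but it takes a different route from the paper's. The paper disposes of the corollary in two lines by citing previously established structural results: the classical cover $f\colon Y\to P(X)$ is locally a Peano covering map and hence a Peano covering map by \ref{LocalPeanoCoverings}, the universal Peano map $P(X)\to X$ is itself a Peano covering map, and the composite is then a Peano covering map by the two-out-of-three result \ref{CompositionOfPeanoCoverings}. You instead verify the three conditions of Definition \ref{PeanoCoveringDef} for the composite directly, transferring each lifting problem through $P(X)$ via the universal property (cubes being Peano spaces) and handling the Peano-map condition by hand with evenly covered neighborhoods. The paper's argument is shorter but leans on \ref{CompositionOfPeanoCoverings}, whose own proof is left as ``an easy exercise'' via the characterization in terms of lifts of simply connected Peano spaces; your argument is self-contained modulo classical covering space theory and the universal property of $P(X)$, and it isolates explicitly the one genuinely delicate point --- that the Peano-map condition quantifies over $X$-open sets while the covering structure lives over $P(X)$-open sets --- resolving it correctly by observing that a path in $f^{-1}(U)$ starting at $y_0$ projects into the path component $c(f(y_0),U)$, which is a basic open set of $P(X)$ by \ref{LPCExistsThm}, so unique path lifting confines the path to the sheet over it. Both approaches are sound; yours trades brevity for transparency.
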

\proof By \ref{LocalPeanoCoverings}, $f\colon Y\to P(X)$ is a Peano covering map.
As the identity function induces a Peano covering map $P(X)\to X$,
$f\colon Y\to X$ is a Peano covering map by \ref{CompositionOfPeanoCoverings}.
\endproof

\begin{proposition}\label{CardinalityOfFibers}
If $f\colon Y\to X$ is a Peano covering map and $X$ is path-connected,
then all fibers of $f$ have the same cardinality.
\end{proposition}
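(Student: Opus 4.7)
The plan is to construct, for any two points $x_0, x_1\in X$, an explicit bijection between $f^{-1}(x_0)$ and $f^{-1}(x_1)$ by lifting a path between them. Since $X$ is path-connected, choose a path $\alpha\colon I\to X$ with $\alpha(0)=x_0$ and $\alpha(1)=x_1$. The essential ingredients are that a Peano covering map $f$ is a Serre fibration (hence has the path lifting property in the sense of Definition \ref{PathLiftingDef}, taking $K$ a point in the fibration diagram) and has the uniqueness of path lifts property (guaranteed by condition (3) in Definition \ref{PeanoCoveringDef}, as noted in the remark right after that definition).

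Define $\phi_\alpha\colon f^{-1}(x_0)\to f^{-1}(x_1)$ by sending $y\in f^{-1}(x_0)$ to $\tilde\alpha_y(1)$, where $\tilde\alpha_y\colon I\to Y$ is the unique lift of $\alpha$ with $\tilde\alpha_y(0)=y$. Existence of $\tilde\alpha_y$ comes from the path lifting property, and its uniqueness from the uniqueness of path lifts, so $\phi_\alpha$ is well defined. Applying the same construction to the reverse path $\alpha^{-1}(t)=\alpha(1-t)$ yields a map $\phi_{\alpha^{-1}}\colon f^{-1}(x_1)\to f^{-1}(x_0)$.

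To finish, I would verify $\phi_{\alpha^{-1}}\circ \phi_\alpha=\mathrm{id}_{f^{-1}(x_0)}$ and $\phi_\alpha\circ \phi_{\alpha^{-1}}=\mathrm{id}_{f^{-1}(x_1)}$. This is immediate from uniqueness: given $y\in f^{-1}(x_0)$, the concatenation $\tilde\alpha_y\ast(\widetilde{\alpha^{-1}})_{\phi_\alpha(y)}$ is a lift of the null-homotopic loop $\alpha\ast\alpha^{-1}$ starting at $y$; since the constant path at $y$ is another such lift, uniqueness of path lifts forces them to agree, so $\phi_{\alpha^{-1}}(\phi_\alpha(y))=y$. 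The reverse composition is symmetric. Hence $\phi_\alpha$ is a bijection, and the two fibers have the same cardinality.

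I do not expect any real obstacle: once the Serre fibration property is unpacked to yield the path lifting property at base-points and unique lifts are invoked, the argument is essentially the same as for classical covering maps. The only mild subtlety is making sure we do not need a homotopy lifting argument (the concatenation trick above already suffices) and that uniqueness applies to paths with prescribed starting point, which is exactly what Definition \ref{UniquenessOfPathLiftsDef} provides in the unpointed form.
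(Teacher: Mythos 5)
Your overall strategy is exactly the paper's: the paper's entire proof is the one-line observation that lifts of a path $\alpha$ from $x_1$ to $x_2$ establish a bijection between the fibers, and your maps $\phi_\alpha$ and $\phi_{\alpha^{-1}}$ are the right way to make that precise. The existence and well-definedness of $\phi_\alpha$ are justified correctly. However, your verification that $\phi_{\alpha^{-1}}\circ\phi_\alpha=\mathrm{id}$ contains a genuine error. You assert that the constant path at $y$ is ``another such lift'' of $\alpha\ast\alpha^{-1}$ and then invoke uniqueness of path lifts. But the constant path at $y$ is a lift of the constant path at $x_0$, not of $\alpha\ast\alpha^{-1}$: these are different maps $I\to X$ (merely homotopic rel endpoints), and the uniqueness of path lifts property of Definition \ref{UniquenessOfPathLiftsDef} only identifies two lifts of the \emph{same} path with the same initial point. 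Deducing that a lift of a null-homotopic loop is a loop genuinely requires a homotopy-lifting argument (this is essentially the content of Lemma \ref{PointedSerrePlusFibersImpliesUPLPLem}), which is precisely the step you announced you were avoiding.

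The repair is immediate and stays within your framework. The reverse path $t\mapsto\tilde\alpha_y(1-t)$ is a lift of $\alpha^{-1}$ starting at $\phi_\alpha(y)=\tilde\alpha_y(1)$, so by uniqueness of path lifts it coincides with $(\widetilde{\alpha^{-1}})_{\phi_\alpha(y)}$; its endpoint is $\tilde\alpha_y(0)=y$, which gives $\phi_{\alpha^{-1}}(\phi_\alpha(y))=y$ directly, and the other composition is symmetric. With that one substitution your argument is correct and coincides with the paper's.
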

\proof Given two points $x_1,x_2\in X$ fix a path $\alpha$ from $x_1$ to $x_2$
and notice lifts of $\alpha$ establish bijectivity of fibers $f^{-1}(x_1)$
and $f^{-1}(x_2)$.
\endproof

The following result has its origins in Lemma 2.3 of \cite{ConLam}
and Proposition 6.6 of \cite{FisZas}.

\begin{proposition}\label{RegularPeanoCMAreCCM}
Suppose $f\colon Y\to X$ is a regular Peano covering map. If $f^{-1}(x_0)$
is countable and $x_0$ has a countable basis of neighborhoods
in $X$, then there is a neighborhood $U$ of $x_0$ in $X$
such that $f^{-1}(V)\to P(V)$ is a classical covering map, where $V$ is the path component of $x_0$ in $U$.
\end{proposition}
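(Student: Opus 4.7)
Fix $y_0\in f^{-1}(x_0)$ and set $H=\pi_1(f)(\pi_1(Y,y_0))\subseteq\pi_1(X,x_0)$. By regularity of $f$ the subgroup $H$ is normal and independent of the choice of point in $f^{-1}(x_0)$, and the monodromy action of $\pi_1(X,x_0)/H$ on $f^{-1}(x_0)$ is free and transitive, so $\pi_1(X,x_0)/H$ is countable. Choose a decreasing countable neighborhood basis $U_1\supseteq U_2\supseteq\cdots$ of $x_0$ and let $V_n$ denote the path component of $x_0$ in $U_n$. The plan is to find some $n$ for which every loop at $x_0$ in $V_n$ represents an element of $H$; once such $n$ is produced, the argument of Proposition~\ref{PeanoCMIsTrivialBundle} adapts: every path component $W$ of $f^{-1}(V_n)$ meets each fiber over $V_n$ in exactly one point (injectivity comes from combining unique path lifting with the fact that loops in $V_n$ lift to loops in $Y$), and the Peano-map structure of $f$ upgrades the resulting bijection $W\to V_n$ into a homeomorphism onto $P(V_n)$, so $f^{-1}(V_n)\to P(V_n)$ is a classical covering map and the proposition follows with $U=U_n$.

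For the contradiction, suppose no such $n$ exists. Then for every $n$ one chooses a loop $\alpha_n$ at $x_0$ in $V_n$ with $[\alpha_n]\notin H$ (any loop in $V_n$ can be conjugated along a path in $V_n$ to one based at $x_0$, and normality of $H$ preserves the coset). The countability of $\pi_1(X,x_0)/H$ combined with a pigeonhole argument on the decreasing family of non-trivial cosets representable in the $V_n$ produces an infinite subsequence $(n_j)$ and a fixed $g\notin H$ with $[\alpha_{n_j}]\in gH$ for all $j$; correspondingly, the standard lift of each $\alpha_{n_j}$ from $y_0$ terminates at the common point $y_\ast=g\cdot y_0\in f^{-1}(x_0)\setminus\{y_0\}$. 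I then form the hedgehog $Z=\bigvee_{j\ge1}I_j$ and the wedge map $\Phi=\bigvee_j\alpha_{n_j}\colon Z\to X$, which is continuous by Lemma~\ref{BasicHedgeHogLemma}. Since $f$ is a hedgehog fibration by Theorem~\ref{CharacterizationOfPeanoCoverings}, there is a continuous lift $\tilde\Phi\colon Z\to Y$ with $\tilde\Phi(\ast)=y_0$; by uniqueness of path lifts each $\tilde\Phi|_{I_j}$ coincides with the standard lift of $\alpha_{n_j}$, so $\tilde\Phi(1_j)=y_\ast$ for every $j$. Continuity of $\tilde\Phi$ at the wedge point of $Z$ (hedgehog topology of Definition~\ref{GeneralizedHEarrings}) then forces $\tilde\Phi(I_j)$ into any prescribed neighborhood of $y_0$ once $j$ is large, and in particular $y_\ast$ lies in every neighborhood of $y_0$ in $Y$.

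The contradiction is completed by separating $y_0$ from $y_\ast$ in $Y$. Since $f$ has the unique path lifting property and $H$ is normal, the classification of regular Peano covering maps as quotients of $\widetilde X$ together with Theorem~\ref{PiOneOfXHAndPathLifting} yields that $H$ is closed in $\pi_1(X,x_0)$ in the new topology of Section~\ref{SECTION: BS topology}, and Proposition~\ref{FibersAreT2Lemma}, applied with $G=H$ inside $\pi_1(X,x_0)$, shows that $f^{-1}(x_0)\cong\pi_1(X,x_0)/H$ is Hausdorff as a subspace of $Y$. Hence $y_0$ admits a neighborhood disjoint from $y_\ast$, contradicting the last sentence of the previous paragraph, and the proposition follows. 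I expect the main obstacle to be the pigeonhole step: it is not automatic that the decreasing family $B_n=\iota_\ast\pi_1(V_n,x_0)\cdot H/H\setminus\{H\}$ of non-trivial cosets representable by loops in $V_n$ has non-empty intersection, and when it does not one must run the hedgehog construction on a sequence of distinct cosets and extract the contradiction from a Baire-style argument combining Hausdorffness of the fiber, its countability, and the convergence of the corresponding fiber tips to $y_0$ to preclude $y_0$ from being a non-isolated point of the fiber.
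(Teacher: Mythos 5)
Your overall strategy --- reduce the problem to finding $n$ with $\iota_\ast\pi_1(V_n,x_0)\subset H$, then derive a contradiction from a sequence of ``bad'' loops shrinking into the neighborhood basis --- matches the paper's, and your hedgehog/fibration mechanism showing that the lift endpoints converge to $y_0$ is sound. But the step you yourself flag is a genuine gap, and it is precisely where the countability of the fiber has to do its work. The pigeonhole fails: the sets of nontrivial cosets representable by loops in $V_n$ form a decreasing family of nonempty subsets of the countable set $\pi_1(X,x_0)/H$, and such a family can have empty intersection (each $[\alpha_n]$ may lie in a fresh coset), so no subsequence with constant coset need exist. Your fallback is also insufficient as stated: running the hedgehog construction on distinct cosets only shows that $y_0$ is a non-isolated point of the fiber, hence (by homogeneity) that the fiber is a countable Hausdorff space without isolated points --- which is not by itself a contradiction ($\mathbf{Q}$ is such a space), and the fiber carries no completeness or compactness that would make a Baire argument run. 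The paper closes this gap with a Pawlikowski-style argument that your proposal lacks: it arranges the loops $\alpha_i$ so that their lift endpoints $y_i$ are pairwise distinct and so that $y_0$ and $y_j$ ($j\le i$) lie in different path components of $f^{-1}(U_{i+1})$, and then forms the \emph{infinite concatenations} $\alpha_{i(1)}\ast\alpha_{i(2)}\ast\cdots$ over all increasing sequences $\{i(k)\}$. There are continuum many such concatenations but only countably many possible lift endpoints, so two of them, first differing at index $k_0$, lift to the same endpoint; cancelling the common initial segment exhibits $\alpha_{i(k_0)}$ as $f(\gamma)\ast\beta$ with $\beta$ a loop in $U_{i(k_0)+1}$, forcing its lift to end in the path component of $f^{-1}(U_{i(k_0)+1})$ containing $y_0$, contrary to the separation built into the construction. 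This ``uncountably many branches versus countable fiber'' step is the essential missing idea.

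A secondary issue: to separate $y_0$ from $y_\ast$ you invoke Theorem \ref{PiOneOfXHAndPathLifting} and Proposition \ref{FibersAreT2Lemma}, which concern the new topology on $\widetilde X_H$ from Section \ref{SECTION: BS topology}; but unique path lifting for $f\simeq p_H\colon\widehat X_H\to X$ (basic topology) does not yield unique path lifting for $\widetilde X_H\to X$, since the topology on $\widetilde X_H$ is coarser and therefore admits more paths, so closedness of $H$ in the new topology is not justified by what you have. The fact you actually need --- that the fiber of $\widehat X_H\to X$ over $x_0$ is Hausdorff --- follows directly from Proposition \ref{PeanoSubsAreHH} combined with Lemma \ref{HSmallLemma}, so this part is a mis-citation rather than a fatal error; the fatal one is the combinatorial step above.
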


\proof Switch to $X$ being Peano by considering $f\colon Y\to P(X)$. Notice $x_0$ has a countable basis of neighborhoods
and $f$ is open. Suppose there is no open subset $U$ of $X$ containing $x_0$
such that $U$ is evenly covered. That means path components of $f^{-1}(U)$
are not mapped bijectively onto their images.
\par
First, we plan to show there is a neighborhood $U$ of $x_0$ in $X$
such that the image of $\pi_1(U,x_0)\to \pi_1(X,x_0)$ is contained
in the image of $\pi_1(f)\colon \pi_1(Y,y_0)\to \pi_1(X,x_0)$.
In particular, there is a lift of $P(U,x_0)\to (Y,y_0)$ of the
inclusion induced map $P(U,x_0)\to (X,x_0)$.
\par Suppose no such $U$ exists.
By induction we will find a basis of neighborhoods $\{U_i\}$ of $x_0$ in $X$
and elements $[\alpha_i]\in \pi_1(U_i,x_0)$ that are not contained
in the image of $\pi_1(U_{i+1},x_0)\to \pi_1(X,x_0)$ and whose lifts
are not loops and end at points $y_i$ such that $y_i\ne y_j$ if $i\ne j$. Given a neighborhood $U_i$ pick a loop $\alpha_i$
in $(U_i,x_0)$ whose lift (as a path) in $(Y,y_0)$ is not a loop
and ends at $y_i\ne y_0$. There is a neighborhood $U_{i+1}$
of $x_0$ in $U_i$ such that the no path components of $f^{-1}(U_{i+1})$
contains both $y_0$ and some $y_{j}$, $j\leq i$. Pick a loop $\alpha_{i+1}$
in $(U_{i+1},x_0)$ whose lift is not a loop.
\par As in \cite{Paw} one can create infinite concatenations
$\alpha_{i(1)}\ast\ldots\ast \alpha_{i(k)}\ast\ldots$ for any increasing sequence
$\{i(k)\}_{k\ge 1}$. By looking at lifts of those infinite concatenations,
there are two different infinite concatenations
$\alpha_{i(1)}\ast\ldots\ast \alpha_{i(k)}\ast\ldots$
and
$\alpha_{j(1)}\ast\ldots\ast \alpha_{j(k)}\ast\ldots$ whose lifts end at the same point
$y\in f^{-1}(x_0)$. Pick the smallest $k_0$ so that $i(k_0)\ne j(k_0)$.
We may assume $i(k_0) < j(k_0)$ and conclude there are
loops $\beta$ in $(U_{k_0+1},x_0)$ and $\gamma$ in $(Y,y_0)$
so that $\alpha_{i(k_0)}\sim f(\gamma)\ast\beta$ i which case the lift
of $\alpha_{i(k_0)}$ in $(Y,y_0)$ ends in the path component of $f^{-1}(U_{i(k_0)+1})$ containing $y_0$, a contradiction.
\par As $f$ is a regular Peano covering map, we can find lifts
$(U,x_0)\to (Y,y)$ of the inclusion map $(U,x_0)\to (X,x_0)$
for any $y\in f^{-1}(x_0)$.
\endproof

\section{Peano subgroups}\label{SECTION Peano-subgroups}

\begin{definition}\label{PeanoSubgroupDef}
Suppose $(X,x_0)$ is a pointed path-connected space.
A subgroup $H$ of $\pi_1(X,x_0)$ is a {\bf Peano subgroup} of $\pi_1(X,x_0)$
if there is a Peano covering map $f\colon Y\to X$ such that $H$
is the image of $\pi_1(f)\colon \pi_1(Y,y_0)\to\pi_1(X,x_0)$
for some $y_0\in f^{-1}(x_0)$.
\end{definition}

\begin{proposition}\label{PeanoSubsAreHH}
If $H$ is a Peano subgroup of $\pi_1(X,x_0)$, then $X$ is homotopically
Hausdorff relative to $H$. In particular, $H$ is closed in $\pi_1(X,x_0)$
equipped with the basic topology.
\end{proposition}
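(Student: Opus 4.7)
The plan is to argue by contradiction: suppose $X$ is not homotopically Hausdorff relative to $H$, and manufacture a non-constant path inside a single fiber of $f$, contradicting clause~(3) of Definition~\ref{PeanoCoveringDef}. Unwinding the definition, the failure provides $g\in\pi_1(X,x_0)\setminus H$, a path $\alpha$ from $x_0$ to some $x_1$, and for every open neighborhood $U$ of $x_1$ a loop $\gamma_U$ in $(U,x_1)$ with $[\alpha\ast\gamma_U\ast\alpha^{-1}]\in Hg$. I fix a Peano covering $f\colon Y\to X$ and basepoint $y_0\in f^{-1}(x_0)$ witnessing $H=\pi_1(f)(\pi_1(Y,y_0))$, lift $\alpha$ to $\tilde\alpha$ starting at $y_0$ with $\tilde x_1:=\tilde\alpha(1)\in f^{-1}(x_1)$, and for each $U$ lift $\gamma_U$ from $\tilde x_1$ to a path $\tilde\gamma_U$ ending at some $\tilde y_U\in f^{-1}(x_1)$.

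The first key step is to show that $\tilde y_U$ does not depend on $U$. Because every $[\alpha\ast\gamma_U\ast\alpha^{-1}]$ lies in the single left coset $Hg$, the standard bijection between $\pi_1(X,x_0)/H$ and $f^{-1}(x_0)$ (send a class to the endpoint of its lift from $y_0$, well-defined and bijective by unique path lifting and the Serre fibration property) shows that the lift of the loop $\alpha\ast\gamma_U\ast\alpha^{-1}$ starting at $y_0$ ends at a single point $y_*\in f^{-1}(x_0)$ independent of $U$, with $y_*\neq y_0$ since $g\notin H$. That lift factors as $\tilde\alpha\ast\tilde\gamma_U\ast\sigma_U$, where $\sigma_U$ is the unique lift of $\alpha^{-1}$ starting at $\tilde y_U$ and ending at $y_*$. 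Reversing, $\sigma_U^{-1}$ is a lift of $\alpha$ starting at $y_*$, and by unique path lifting this reverse is the same path for every $U$; hence its endpoint $\tilde y_U$ is a single point $\tilde z$. Applying the analogous bijection at $x_1$ gives $\tilde z\neq\tilde x_1$.

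Second, transfer this to the topology of $Y$ via the fact that $f$ is a Peano map. By Definition~\ref{PeanoMapDef} the path components of $f^{-1}(U)$, for $U$ open in $X$, form a basis of $Y$. For every open $U\ni x_1$ the path $\tilde\gamma_U$ lies entirely in $f^{-1}(U)$ (its image under $f$ is $\gamma_U\subset U$) and joins $\tilde x_1$ to $\tilde z$, so the two points lie in the same path component of $f^{-1}(U)$ for every such $U$. Consequently every basic neighborhood of one is a basic neighborhood of the other, i.e.\ $\tilde x_1$ and $\tilde z$ are topologically indistinguishable in $Y$.

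Finally, define $\phi\colon I\to f^{-1}(x_1)$ by $\phi(t)=\tilde x_1$ for $t<1$ and $\phi(1)=\tilde z$. Topological indistinguishability means that for every open $V\subset Y$ the preimage $\phi^{-1}(V\cap f^{-1}(x_1))$ is either $I$ or $\emptyset$, so $\phi$ is continuous into the subspace $f^{-1}(x_1)$. Then $\phi$ is a non-constant path in the fiber, contradicting clause~(3) of the Peano covering definition. The ``in particular'' clause about $H$ being closed is then immediate from~\ref{HGHausdorffAndBasicTopology} applied with $\alpha$ the constant loop at $x_0$, where $h_\alpha$ is the identity. The main obstacle I foresee is the first step: the delicate coset bookkeeping needed to collapse all the $\tilde y_U$ to a single point. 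Without it, one only gets a cloud of distinct companions accumulating at $\tilde x_1$, which does not by itself violate trivial path components of the fiber.
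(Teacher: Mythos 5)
Your proof is correct and follows essentially the same route as the paper's: lift $\alpha$ and the loops $\gamma_U$ to the Peano covering map witnessing $H$, use the fact that the endpoint of a lifted loop depends only on its $H$-coset, and contradict the triviality of path components of fibers via the Peano-map basis, finishing with \ref{HGHausdorffAndBasicTopology} for the closedness claim. The only difference is organizational: the paper argues directly, asserting outright that some neighborhood $U$ of $x_1$ separates the two fiber points into distinct path components of $f^{-1}(U)$, whereas you argue by contraposition and your second and third steps (topological indistinguishability yielding a non-constant path in the fiber) are precisely the justification of that assertion which the paper leaves implicit.
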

\proof Choose a Peano covering map $f\colon Y\to X$ so that
$im(\pi_1(f))=H$ for some $y_0\in f^{-1}(x_0)$. If $g\in \pi_1(X,x_0)\setminus H$
and $\alpha$ is a path in $X$ from $x_0$ to $x_1$,
then lifts of $\alpha$ and $g\cdot \alpha$ end in two different points $y_1$
and $y_2$ of the fiber $f^{-1}(x_1)$ and there is a neighborhood $U$
of $x_1$ in $X$ such that no path component of
$f^{-1}(U)$ contains both $y_1$ and $y_2$.
Suppose there is a loop $\gamma$ in $(U,x_1)$ with the property
$[\alpha\ast\gamma\ast\alpha^{-1}]\in H\cdot g$. In that case
the lifts of both $\alpha\ast\gamma$ and $g\cdot\alpha$
end at $y_2$. Since the lift of $\alpha$ ends in the same path component
of $f^{-1}(U)$ as the lift of $\alpha\ast\gamma$, both $y_1$ and $y_2$
belong to the same component of $f^{-1}(U)$, a contradiction.
\par Use \ref{HGHausdorffAndBasicTopology} to conclude
$H$ is closed in $\pi_1(X,x_0)$
equipped with the basic topology.
\endproof

\begin{remark}\label{WeakPeanoSubsAreHHRemark}
In case of $H$ being the trivial subgroup, Lemma 2.10 of \cite{FisZas}
seems to imply that $X$ is homotopically
Hausdorff but the proof of it is valid only in a special case.
\end{remark}

\begin{proposition}\label{ConjugateOfPeanoSubs}
If $H$ is a Peano subgroup of $\pi_1(X,x_0)$, then
any conjugate of $H$ is a Peano
subgroup of $\pi_1(X,x_0)$.
\end{proposition}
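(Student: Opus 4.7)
The plan is to keep the same Peano covering map $f\colon Y\to X$ witnessing that $H$ is a Peano subgroup and simply change the base-point in $Y$. Concretely, given $g\in\pi_1(X,x_0)$ I want to exhibit a point $y_1\in f^{-1}(x_0)$ such that $\pi_1(f)(\pi_1(Y,y_1))=g^{-1}Hg$, which by definition makes $g^{-1}Hg$ a Peano subgroup.

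The construction of $y_1$ is the obvious one. Represent $g$ by a loop $\alpha\colon(I,\{0,1\})\to(X,x_0)$. Since $f$ is a Peano covering map, it is in particular a Serre fibration with the unique path lifting property (Definition~\ref{PeanoCoveringDef}), so there is a unique lift $\tilde\alpha\colon I\to Y$ of $\alpha$ with $\tilde\alpha(0)=y_0$. Set $y_1=\tilde\alpha(1)$; clearly $y_1\in f^{-1}(x_0)$.

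Next I carry out the standard change-of-base-point calculation. For any loop $\beta$ at $y_1$, the concatenation $\tilde\alpha\ast\beta\ast\tilde\alpha^{-1}$ is a loop at $y_0$, so
\[
[\alpha]\cdot \pi_1(f)([\beta])\cdot[\alpha]^{-1}=\pi_1(f)\bigl([\tilde\alpha\ast\beta\ast\tilde\alpha^{-1}]\bigr)\in H,
\]
which gives the inclusion $\pi_1(f)(\pi_1(Y,y_1))\subset g^{-1}Hg$. Conversely, any loop $\delta$ at $y_0$ with $\pi_1(f)([\delta])\in H$ can be conjugated by $\tilde\alpha^{-1}$ to produce a loop at $y_1$; applying $\pi_1(f)$ shows every element of $g^{-1}Hg$ lies in the image, so equality holds. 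Since $f$ is still the same Peano covering map, Definition~\ref{PeanoSubgroupDef} is satisfied with base-point $y_1$, and $g^{-1}Hg$ is a Peano subgroup.

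No step is genuinely hard; the only thing one has to be slightly careful about is that the endpoint $y_1$ is well-defined independently of the choice of representative $\alpha$ of $g$, which follows from homotopy lifting for the Serre fibration $f$ together with the uniqueness of path lifts. Everything else is a two-line manipulation of based homotopy classes.
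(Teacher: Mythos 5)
Your proposal is correct and follows essentially the same route as the paper: lift a loop representing the conjugating element to a path in $Y$ and take the image of $\pi_1(f)$ at its endpoint (the paper lifts a representative of $g^{-1}$ to land on $gHg^{-1}$, while you lift $g$ to land on $g^{-1}Hg$, an immaterial difference). The paper merely asserts the change-of-base-point identity that you spell out, and your extra worry about well-definedness of $y_1$ is unnecessary since any choice of endpoint with the stated image suffices.
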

\proof Choose a Peano covering map $f\colon Y\to X$ so that
$im(\pi_1(f))=H$ for some $y_0\in f^{-1}(x_0)$.
Suppose $G=g\cdot H\cdot g^{-1}$ and choose a loop $\alpha$
in $(X,x_0)$ representing $g^{-1}$. Let $\beta$ be a path in $(Y,y_0)$
that is the lift of $\alpha$. Put $y_1=\beta(1)$ and notice the image of
$\pi_1(f)\colon \pi_1(Y,y_1)\to \pi_1(X,x_0)$ is $G$.
\endproof

\begin{proposition}\label{IntroToBogley-SieradskiTopology}
Suppose $(X,x_0)$ is a pointed path-connected topological space.
If $f\colon (Y,y_0)\to (X,x_0)$ is a Peano covering map
with image of $\pi_1(f)$ equal $H$, then $f$ is equivalent to the projection
$p_H\colon \widehat X_H\to X$.
\end{proposition}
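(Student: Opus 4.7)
The plan is to construct mutually inverse pointed continuous maps $\phi$ and $\psi$ between $(\widehat X_H,\widehat x_0)$ and the path component $(Y_0,y_0)$ of $y_0$ in $Y$, each commuting with the projections to $(X,x_0)$. Since standard lifts give a path from $\widehat x_0$ to every $[\alpha]_H\in \widehat X_H$, the space $\widehat X_H$ is path-connected, and any pointed lift to $Y$ over $X$ must land in $Y_0$, so restricting to $Y_0$ is forced. Throughout I will use that, by Definition~\ref{PeanoCoveringDef}, $f$ has the unique path lifting property, and that $f$ is a Peano map, so path components of $f^{-1}(V)$ form a basis of the topology of $Y$.

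First I would define $\psi\colon \widehat X_H\to Y_0$ by $\psi([\alpha]_H):=\tilde\alpha(1)$, where $\tilde\alpha$ is the unique lift of $\alpha$ in $Y$ starting at $y_0$. Well-definedness on $\sim_H$-classes uses the hypothesis $\operatorname{im}\pi_1(f)=H$: if $[\alpha]_H=[\beta]_H$ then $[\alpha\ast\beta^{-1}]\in H$ lies in the image, so its lift at $y_0$ is a loop, whence $\tilde\alpha(1)=\tilde\beta(1)$. For continuity at $[\alpha]_H$, given an open neighborhood $U$ of $y=\psi([\alpha]_H)$ pick, using that $f$ is a Peano map, an open $V\ni\alpha(1)$ in $X$ so that the path component $C$ of $y$ in $f^{-1}(V)$ is contained in $U$. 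For any $[\beta]_H\in B_H([\alpha]_H,V)$, write $\beta\sim_H\alpha\ast\gamma$ with $\gamma$ a path in $V$; the lift of $\gamma$ starting at $y$ remains in $f^{-1}(V)$, hence in $C$, and ends at $\psi([\beta]_H)$. Thus $\psi\bigl(B_H([\alpha]_H,V)\bigr)\subset C\subset U$.

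For the reverse direction, pick for each $y\in Y_0$ a path $\alpha_y$ in $Y_0$ from $y_0$ to $y$ and set $\phi(y):=[f\circ\alpha_y]_H$. This is well-defined since two choices of $\alpha_y$ differ by a loop in $(Y,y_0)$, whose $f$-image represents an element of $\operatorname{im}\pi_1(f)=H$. Continuity at $y$ again rides on $f$ being a Peano map: given a basic neighborhood $B_H(\phi(y),V)$ of $\phi(y)$, let $W$ be the path component of $y$ in $f^{-1}(V)$. For $y'\in W$, choose a path $\delta$ from $y$ to $y'$ in $W$; then $\alpha_{y'}\simeq\alpha_y\ast\delta$, so $\phi(y')=[f\circ(\alpha_y\ast\delta)]_H=[(f\circ\alpha_y)\ast(f\circ\delta)]_H\in B_H(\phi(y),V)$ because $f\circ\delta$ is a path in $V$. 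Finally I would check $\phi\circ\psi=\operatorname{id}$ and $\psi\circ\phi=\operatorname{id}$ directly: $\phi(\psi([\alpha]_H))=[f\circ\tilde\alpha]_H=[\alpha]_H$, and $\psi(\phi(y))=\psi([f\circ\alpha_y]_H)$ is the endpoint of the lift of $f\circ\alpha_y$ at $y_0$, which by unique path lifting is $\alpha_y$, ending at $y$. Both $\phi,\psi$ obviously intertwine $p_H$ and $f$ and send base points to base points, giving the claimed equivalence. No serious obstacle arises; the only delicate point is the continuity verifications, where the Peano-map hypothesis on $f$ precisely matches the form of the basic neighborhoods $B_H([\alpha]_H,V)$.
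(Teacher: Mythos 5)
Your proposal is correct and follows essentially the same route as the paper: both define the comparison map by sending $[\alpha]_H$ to the endpoint of the unique lift of $\alpha$ at $y_0$, and both use the Peano-map property to match path components of $f^{-1}(V)$ with the basic sets $B_H([\alpha]_H,V)$ in the two continuity checks (the paper phrases the second one as openness of $h$ rather than continuity of an explicit inverse $\phi$, which is the same thing). Your explicit well-definedness argument via $\operatorname{im}\pi_1(f)=H$ and the restriction to the path component $Y_0$ are slightly more careful than the paper's terse ``Note $h$ is a bijection,'' but they do not change the argument.
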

\proof
 Define $h\colon (\widehat X_H,\widehat x_0)\to (Y,y_0)$ by choosing a lift $\widehat \alpha$
of every path $\alpha$ in $X$ starting at $x_0$ and declaring $h([\alpha]_H)=\widehat\alpha(1)$. Note $h$ is a bijection.
Given $y_1=\widehat\alpha(1)$ and given a neighborhood $U$ of $y_1$ in $Y$
choose a neighborhood $V$ of $f(y_1)=\alpha(1)$ in $X$ so that the path component
of $f^{-1}(V)$ containing $y_1$ is a subset of $U$. Observe $B_H([\alpha]_H,V)\subset
h^{-1}(U)$ which proves $h$ is continuous.
\par Conversely, given a neighborhood $W$ of $\alpha(1)$ in $X$
the image $h(B_H([\alpha]_H,W))$ of $B_H([\alpha]_H,W)$ equals the path component
of $\widehat\alpha(1)$ in $f^{-1}(W)$ and is open in $Y$.
\endproof

\begin{theorem}\label{BasicPeanoCoveringThm}
If $X$ is a path-connected space, $x_0\in X$, and $H$ is a subgroup of $\pi_1(X,x_0)$, then the following conditions are equivalent:
\begin{itemize}
\item[a.] $H$ is a Peano subgroup of $\pi_1(X,x_0)$,
\item[b.] The endpoint projection $p_H\colon (\widehat X_H,\widehat x_0)\to X$ is a Peano covering map,
\item[c.] The image of $\pi_1(p_H)\colon \pi_1(\widehat X_H,\widehat x_0)\to \pi_1(X,x_0)$ is
contained in $H$,
\item[d.] $p_H\colon (\widehat X_H,\widehat x_0)\to (X,x_0)$ has the unique path lifting property.
\end{itemize}
\end{theorem}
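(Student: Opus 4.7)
\medskip

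\noindent\textbf{Proof proposal.} The plan is to route the circle of implications through the easy equivalences (b) $\iff$ (c) $\iff$ (d), which come directly from earlier results, and then bridge between (a) and (b) using the universal property of $\widehat X_H$ already developed in the paper. Specifically, Theorem \ref{ProjIsPeanoCMCharThm} already tells us that $p_H$ is a Peano covering map if and only if it has the unique path lifting property, giving (b) $\iff$ (d). Proposition \ref{ProjHasUPLP} tells us that $p_H$ has the unique path lifting property if and only if the image of $\pi_1(p_H)$ is contained in $H$, giving (c) $\iff$ (d). Hence three of the four conditions are linked for free, and the task reduces to (a) $\iff$ (b).

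For (a) $\implies$ (b), I would invoke Proposition \ref{IntroToBogley-SieradskiTopology} directly. Given a Peano covering map $f\colon (Y,y_0)\to (X,x_0)$ with $\operatorname{im}(\pi_1(f))=H$, that result furnishes a homeomorphism $\widehat X_H\to Y$ commuting with the endpoint projections; consequently $p_H$ inherits from $f$ the property of being a Peano covering map. No further calculation is required.

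The only part that needs care is (b) $\implies$ (a): to conclude that $H$ is a Peano subgroup I must verify that the image of $\pi_1(p_H)$ equals $H$, not merely sits inside it. Containment in $H$ is already handed to me by (c). For the reverse containment, given $[\gamma]\in H$ I would form the standard lift $\widehat\gamma$ of $\gamma$ based at $\widehat x_0=[c_{x_0}]_H$ and observe that its endpoint is $[\gamma]_H$, which coincides with $\widehat x_0$ precisely because $[\gamma\ast c_{x_0}^{-1}]=[\gamma]\in H$ by definition of the relation $\sim_H$. Hence $\widehat\gamma$ is a loop in $\widehat X_H$ at $\widehat x_0$ whose image under $p_H$ represents $[\gamma]$, proving $H\subset \operatorname{im}(\pi_1(p_H))$.

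The hard part, if any, is this last surjectivity check, but it is a direct unpacking of $\sim_H$ and the formula for the standard lift. Everything else in the argument is bookkeeping around results proved earlier, so I expect no substantive obstacle.
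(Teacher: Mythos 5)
Your proposal is correct and follows essentially the same route as the paper: it chains (b) $\Leftrightarrow$ (d) via Theorem \ref{ProjIsPeanoCMCharThm}, (c) $\Leftrightarrow$ (d) via Proposition \ref{ProjHasUPLP}, gets (a) $\Rightarrow$ (b) from Proposition \ref{IntroToBogley-SieradskiTopology}, and closes (b) $\Rightarrow$ (a) by noting the image of $\pi_1(p_H)$ actually equals $H$. Your explicit standard-lift verification that $H\subset \operatorname{im}(\pi_1(p_H))$ is exactly the detail the paper leaves implicit (and uses elsewhere, e.g.\ in the proof of Theorem \ref{PiOneOfXHAndPathLifting}), so there is no substantive difference.
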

\proof c)$\equiv$d) is done in \ref{ProjHasUPLP}.
b)$\equiv$d) is contained in \ref{ProjIsPeanoCMCharThm}.
\par
a)$\implies$b) follows from \ref{IntroToBogley-SieradskiTopology}.
\par
b)$\implies$a) holds as c) implies the image of $\pi_1(p_H)$ is $H$.
\endproof

Let us state a straightforward consequence of \ref{BasicPeanoCoveringThm}:

\begin{corollary}\label{SimplyPeanoCoveringThm}
If $X$ is a path-connected space and $x_0\in X$, then the following conditions are equivalent:
\begin{itemize}
\item[a.] The endpoint projection $p\colon \widehat X\to X$ is a Peano covering map,
\item[b.] $\pi_1(p)\colon \pi_1(\widehat X,\widehat x_0)\to \pi_1(X,x_0)$ is trivial,
\item[c.] $\widehat X$ is simply connected,
\item[d.] $p\colon (\widehat X,\widehat x_0)\to (X,x_0)$ has the unique path lifting property.
\end{itemize}
\end{corollary}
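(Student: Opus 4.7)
\proof The plan is to reduce this corollary to the special case $H=\{1\}$ of Theorem~\ref{BasicPeanoCoveringThm}. Under that identification, condition (a) of the corollary is condition (b) of the theorem, while condition (d) of the corollary is condition (d) of the theorem. The image of $\pi_1(p)$ being contained in the trivial subgroup is exactly the statement that $\pi_1(p)$ is the trivial homomorphism, so condition (c) of the theorem becomes condition (b) of the corollary. Thus (a), (b), (d) are all equivalent directly from \ref{BasicPeanoCoveringThm}.

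The only new content is to relate simple connectivity of $\widehat X$ (condition (c) of the corollary) to triviality of $\pi_1(p)$ (condition (b) of the corollary). First I would observe that $\widehat X$ is path-connected: given any $[\alpha]\in \widehat X$, the standard lift $\hat\alpha$ defined in Section~\ref{SECTION: UPeanoSpace} is a path in $\widehat X$ from $\widehat x_0$ to $[\alpha]$. So ``simply connected'' for $\widehat X$ is equivalent to $\pi_1(\widehat X,\widehat x_0)=\{1\}$.

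The direction (c)$\Rightarrow$(b) is then immediate: if $\pi_1(\widehat X,\widehat x_0)$ is trivial, then certainly $\pi_1(p)$ is the trivial map. For (b)$\Rightarrow$(c), assuming any one of the equivalent conditions (a), (b), (d) holds, $p\colon \widehat X\to X$ is a Peano covering map and so in particular a Serre fibration with the unique path lifting property. Given a loop $\gamma$ in $(\widehat X,\widehat x_0)$, triviality of $\pi_1(p)$ means $p\circ\gamma$ is null-homotopic in $X$ rel.\ endpoints; lifting this null-homotopy through the Serre fibration $p$ produces a homotopy in $\widehat X$ whose boundary behavior, pinned down by unique path lifting, exhibits $\gamma$ as null-homotopic rel.\ endpoints. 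Hence $\pi_1(\widehat X,\widehat x_0)=\{1\}$.

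The only mildly delicate step is the last one: ensuring that the lifted null-homotopy in $\widehat X$ is genuinely a null-homotopy of $\gamma$ (i.e.\ that its restrictions to the relevant edges of $I^2$ are what we expect), which is where unique path lifting is needed to identify the lifts of the constant sides. This is the same mechanism already used in the proof of Theorem~\ref{PiOneOfXHAndPathLifting}, and no further work is required.
\endproof
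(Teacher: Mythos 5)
Your proposal is correct and follows the route the paper intends: the paper offers no written proof beyond calling the corollary a straightforward consequence of Theorem~\ref{BasicPeanoCoveringThm}, i.e.\ the specialization $H=\{1\}$ that you carry out, under which (a), (b), (d) match conditions (b), (c), (d) of that theorem. Your treatment of the remaining equivalence with (c) --- path-connectedness of $\widehat X$ via standard lifts, plus injectivity of $\pi_1(p)$ obtained by lifting a null-homotopy through the Serre fibration and using unique path lifting to control the boundary --- is exactly the standard argument (the same mechanism as in \ref{PiOneOfXHAndPathLifting}) and correctly supplies the one detail the paper leaves implicit.
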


\begin{corollary}\label{ClosedNormalSubsArePeano}
Closed and normal subgroups of $\pi_1(X,x_0)$ are Peano subgroups of $\pi_1(X,x_0)$.
\end{corollary}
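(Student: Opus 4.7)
My plan is to combine Theorem \ref{PiOneOfXHAndPathLifting} with Theorem \ref{BasicPeanoCoveringThm}, using as a bridge the continuous identity $\widehat X_G\to\widetilde X_G$ recorded in Section \ref{SECTION: BS topology}. Given a normal and closed subgroup $G$ of $\pi_1(X,x_0)$, Theorem \ref{PiOneOfXHAndPathLifting} directly yields the unique path lifting property for $p_G\colon\widetilde X_G\to X$. My task will then be to promote this to the unique path lifting property for $p_G\colon\widehat X_G\to X$, after which Theorem \ref{BasicPeanoCoveringThm} will identify $G$ as a Peano subgroup.

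The promotion step is the heart of the argument. The endpoint projection $\widehat X_G\to X$ already has the path lifting property via standard lifts (as used repeatedly in Section 2), so only uniqueness needs to be verified. If $\alpha,\beta\colon I\to\widehat X_G$ are two paths with $\alpha(0)=\beta(0)$ and $p_G\circ\alpha=p_G\circ\beta$, composition with the continuous identity $\widehat X_G\to\widetilde X_G$ yields two paths into $\widetilde X_G$ that still agree at $0$ and still project to the same path in $X$. By unique path lifting in the new topology they coincide as functions into the common underlying set, and hence $\alpha=\beta$ as paths in $\widehat X_G$.

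The only subtle point — and the place where I expect whatever small difficulty there is to lurk — is the matching of the two notions of closedness: the word \textbf{closed} in the statement must refer to the topology on $\pi_1(X,x_0)$ inherited as a subspace of $\widetilde X$ with the new topology, which is precisely the hypothesis that Theorem \ref{PiOneOfXHAndPathLifting} needs. Once that convention is fixed, the argument above is formal: from uniqueness of path lifts for $\widehat X_G\to X$, Theorem \ref{BasicPeanoCoveringThm} (equivalence of d) with a)) delivers the conclusion that $G$ is a Peano subgroup of $\pi_1(X,x_0)$.
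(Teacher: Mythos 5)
Your proposal is correct and follows essentially the same route as the paper: apply Theorem \ref{PiOneOfXHAndPathLifting} to get unique path lifting for $\widetilde X_G\to X$, transfer uniqueness to $\widehat X_G\to X$ via the continuous identity $\widehat X_G\to\widetilde X_G$ (the paper leaves this step implicit, merely noting that $\widehat X_G\to X$ already has the path lifting property), and conclude with Theorem \ref{BasicPeanoCoveringThm}. Your remark about which topology the word \textbf{closed} refers to matches the paper's convention of viewing subgroups of $\pi_1(X,x_0)$ as subspaces of $\widetilde X$ with the new topology.
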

\proof By \ref{PiOneOfXHAndPathLifting} the endpoint projection $p_H\colon (\widetilde X_H,\widetilde x_0)\to X$ has unique path lifting property.
Since $p_H\colon (\widehat X_H,\widehat x_0)\to X$ has path lifting property,
this implies $p_H\colon (\widehat X_H,\widehat x_0)\to X$ has the unique path lifting
property.
\endproof

\begin{corollary}\label{IntersectionArePeano}
If $H(s)$ is a Peano subgroup of $\pi_1(X,x_0)$ for each $s\in S$,
then $G=\bigcap\limits_{s\in S}H(s)$ is a
 Peano subgroup of $\pi_1(X,x_0)$.
\end{corollary}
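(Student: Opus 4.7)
The plan is to invoke Theorem \ref{BasicPeanoCoveringThm}: it suffices to check that the endpoint projection $p_G\colon(\widehat X_G,\widehat x_0)\to(X,x_0)$ satisfies condition (c), i.e., that the image of $\pi_1(p_G)$ is contained in $G$. The key idea is to compare $\widehat X_G$ with each $\widehat X_{H(s)}$ via natural quotient maps and transfer the Peano-subgroup property of each $H(s)$ down to $G$.

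For each $s\in S$ I would introduce $\phi_s\colon \widehat X_G\to \widehat X_{H(s)}$ by $[\alpha]_G\mapsto [\alpha]_{H(s)}$; this is well-defined because $G\subseteq H(s)$ forces the relation $\sim_G$ to refine $\sim_{H(s)}$. Continuity is forced by the transparent definition of the basic topology: the inclusion $\phi_s(B_G([\alpha]_G,U))\subseteq B_{H(s)}([\alpha]_{H(s)},U)$ holds for every open $U\subseteq X$ containing $\alpha(1)$, since a path $\gamma$ in $U$ with $\beta\sim_G\alpha\ast\gamma$ automatically satisfies $\beta\sim_{H(s)}\alpha\ast\gamma$. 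By construction $p_{H(s)}\circ \phi_s=p_G$, and $\phi_s$ sends the base point of $\widehat X_G$ to the base point of $\widehat X_{H(s)}$.

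Now take any loop $\gamma\colon(I,0)\to(\widehat X_G,\widehat x_0)$. For every $s$, $\phi_s\circ\gamma$ is a loop in $(\widehat X_{H(s)},\widehat x_0)$ and $p_G\circ\gamma=p_{H(s)}\circ(\phi_s\circ\gamma)$. Since $H(s)$ is a Peano subgroup, Theorem \ref{BasicPeanoCoveringThm} (a)$\Rightarrow$(c) yields $[p_G\circ\gamma]=[p_{H(s)}\circ\phi_s\circ\gamma]\in H(s)$. As $s\in S$ is arbitrary, $[p_G\circ\gamma]\in\bigcap_{s\in S}H(s)=G$. Thus the image of $\pi_1(p_G)$ lies in $G$, and Theorem \ref{BasicPeanoCoveringThm} (c)$\Rightarrow$(a) finishes the proof.

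The only delicate point in the whole argument is verifying continuity of $\phi_s$, and that is immediate from the definition of the basic neighborhoods $B_H([\alpha]_H,U)$; consequently I do not anticipate any real obstacle beyond unwinding definitions.
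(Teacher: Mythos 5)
Your proof is correct and follows essentially the same route as the paper: both arguments factor $p_G$ through each $p_{H(s)}$ to conclude $\mathrm{im}(\pi_1(p_G))\subset\bigcap_{s\in S}H(s)=G$ and then invoke the characterization of Peano subgroups via condition (c) of Theorem \ref{BasicPeanoCoveringThm}. You merely make explicit the continuity of the comparison maps $\phi_s$, which the paper leaves implicit in the phrase ``factors through.''
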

\proof The projection $p_G\colon (\widehat X_G,\widehat x_0)\to (X,x_0)$ factors through
\par\noindent
$p_{H(s)}\colon (\widehat X_{H(s)},\widehat x_0)\to (X,x_0)$ for each $s\in S$.
Therefore $im(\pi_1(p_G))\subset \bigcap\limits_{s\in S}H(s)=G$
and \ref{ProjIsPeanoCMCharThm} (in conjunction with \ref{ProjHasUPLP}) says $G$ is a Peano subgroup of $\pi_1(X,x_0)$.
\endproof

\begin{corollary}\label{UniversalPeanoCM}
For each path-connected space $X$ there is a universal Peano covering map
$p\colon Y\to X$. Thus, for each Peano covering map $q\colon Z\to X$
and any points $z_0\in Z$ and $y_0\in Y$ satisfying $q(z_0)=p(y_0)$,
there is a Peano covering map $r\colon Y\to Z$ so that $r(y_0)=z_0$.
Moreover, the image of $\pi_1(Y)$ is normal in $\pi_1(X)$.
\end{corollary}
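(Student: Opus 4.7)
The plan is to take $p := p_H \colon \widehat{X}_H \to X$, where $H$ is the intersection of all Peano subgroups of $\pi_1(X,x_0)$. By Corollary~\ref{IntersectionArePeano}, $H$ is itself a Peano subgroup, and Theorem~\ref{BasicPeanoCoveringThm} then guarantees that $p_H$ is a Peano covering map with $\mathrm{im}(\pi_1(p_H,\widehat{x}_0)) = H$. Normality follows from Proposition~\ref{ConjugateOfPeanoSubs}: conjugation by any $g \in \pi_1(X,x_0)$ permutes the family of Peano subgroups, so it fixes their intersection, giving $gHg^{-1}=H$.

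To establish the universal property, consider a Peano covering $q\colon Z\to X$ with $q(z_0) = p(y_0)$. Because $Z$ is locally path-connected (being the domain of a Peano map), the path-component $Z'$ of $z_0$ in $Z$ is open in $Z$; and since $q$ has the path-lifting property while $X$ is path-connected, $q(Z')=X$ and $q|_{Z'}$ inherits the Serre fibration, Peano-map, and unique-path-lifting properties, hence is itself a Peano covering. Setting $K := \pi_1(q)(\pi_1(Z',z_0))$ — a Peano subgroup of $\pi_1(X,q(z_0))$ by definition — one has $H \subset K$ after base-point transport. Corollary~\ref{MainPropertyOfPeanoCoverings} applied to $p$ (as a map from the Peano space $Y$) and to $q|_{Z'}$ then produces the unique continuous lift $r\colon (Y, y_0) \to (Z', z_0)$ of $p$.

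It remains to verify that $r$ itself is a Peano covering map. Here we invoke Corollary~\ref{CompositionOfPeanoCoverings} with $f = r$, $g = q|_{Z'}$, $h = g\circ f = p$: the spaces $Y$, $Z'$, $X$ are path-connected, the middle space $Z'$ is a Peano space, and two of the three maps (namely $g$ and $h$) are already known to be Peano covering maps, so $r\colon Y \to Z'$ is a Peano covering map as well; viewed as a map into the larger space $Z$ it remains a Peano covering map since $Z'$ is open in $Z$.

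The hardest part of the argument will be the base-point bookkeeping: the Peano-subgroup notion is defined with respect to a single distinguished base point, whereas the statement permits arbitrary $y_0 \in Y$ and $z_0 \in Z$ with $q(z_0) = p(y_0)$, possibly lying over a point $x_1 \neq x_0$. The normality of $H$ is exactly what makes this transparent, since under base-point transport along a path from $x_0$ to $x_1$ a normal subgroup is sent to a canonically-defined normal subgroup of $\pi_1(X, x_1)$ independent of the chosen path, and the lifting criterion of Corollary~\ref{MainPropertyOfPeanoCoverings} becomes base-point-invariant.
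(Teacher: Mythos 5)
Your proposal is correct and follows essentially the same route as the paper: take $H$ to be the intersection of all Peano subgroups of $\pi_1(X,x_0)$, which is a normal Peano subgroup by \ref{IntersectionArePeano} and \ref{ConjugateOfPeanoSubs}, set $Y=\widehat X_H$, and obtain the lift $r$ from \ref{MainPropertyOfPeanoCoverings}. The paper's proof is just a terser version of yours; your additional care (restricting to the path component $Z'$ of $z_0$, invoking \ref{CompositionOfPeanoCoverings} to see that $r$ is itself a Peano covering map, and the base-point transport made canonical by normality of $H$) fills in details the paper leaves implicit.
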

\proof Let $H$ be the intersection of all Peano subgroups of $\pi_1(X,x_0)$
by \ref{IntersectionArePeano} and \ref{ConjugateOfPeanoSubs} it is a normal Peano subgroup of $\pi_1(X,x_0)$.
Put $Y=\widehat X_H$ and use \ref{MainPropertyOfPeanoCoverings}.
\endproof

It would be of interest to characterize path-connected spaces $X$
admitting a universal Peano covering that is simply connected (that amounts to
$\widehat X$ being simply connected). 
Here is an equivalent problem:

\begin{problem}\label{TrivialPeanoSubsProblem}
Characterize path-connected spaces $X$ so that the trivial group is a Peano subgroup
of $\pi_1(X,x_0)$.
\end{problem}

So far the following classes of spaces
belong to that category:
\begin{enumerate}
\item Any product of spaces admitting simply connected Peano cover (see \ref{ProductOfPeanoCoveringMaps}).
\item Subsets of closed surfaces: it is proved in \cite{FisZasFirst} that if $X$ is any
subset of a closed surface, then $\pi_1(X,x_0)\to\check\pi_1(X,x_0)$ is injective.
\item $1$-dimensional, compact and Hausdorff,
or $1$-dimensional, separable and metrizable: $\pi_1(X,x_0)\to\check\pi_1(X,x_0)$
is injective by \cite[Corollary 1.2 and Final Remark]{EdaKaw}. 
It is shown in \cite{Eda} (see proof of Theorem 1.4) that
the projection $\widehat X\to X$ has the uniqueness of path-lifting
property if $X$ is $1$-dimensional and metrizable.
See \cite{CanCon}
for results on the fundamental group of $1$-dimensional spaces.
\item Trees of manifolds: If $X$ is the limit of an inverse system of
closed PL-manifolds of some fixed dimension, whose consecutive terms are obtained
by connect summing with closed PL-manifolds, which in turn are trivialized by the
bonding maps, then X is called a tree of manifolds. Every tree of manifolds is
path-connected and locally path-connected, but it need not be semilocally simplyconnected
at any one of its points. Trees of manifolds arise as boundaries of certain
Coxeter groups and as boundaries of certain negatively curved geodesic spaces 
\cite{FisGui}.
It is shown in \cite{FisGui} that if X is a tree of manifolds (with a certain denseness of the
attachments in the case of surfaces), then $\pi_1(X,x_0)\to\check\pi_1(X,x_0)$ is injective.
\end{enumerate}

Notice Example 2.7 in \cite{FisZas} gives $X$ so that $p\colon \widehat X\to X$ does not
have the unique path lifting property (one can construct a simpler example
with $X$ being the Harmonic Archipelago). However, $X$ is not homotopically
Hausdorff.

\begin{problem}\label{HHProblem}
Is there a homotopically Hausdorff space $X$ such that $p\colon \widehat X\to X$
does not have the uniqueness of path lifting property?
\end{problem}

\begin{corollary}\label{CountableIndexNormalSubsOfPeanoArePeano}
Suppose $H$ is a normal subgroup of $\pi_1(X,x_0)$.
If there is a Peano subgroup $G$ of $\pi_1(X,x_0)$ containing $H$ such that $G/H$ is countable,
then $H$ is a
 Peano subgroup of $\pi_1(X,x_0)$ if and only if $X$ is homotopically Hausdorff
 relative to $H$.
\end{corollary}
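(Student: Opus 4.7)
The forward direction is free. If $H$ is a Peano subgroup, then Proposition~\ref{PeanoSubsAreHH} gives that $X$ is homotopically Hausdorff relative to $H$, with no use of the hypotheses on $G$.

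For the converse, suppose $X$ is homotopically Hausdorff relative to $H$, fix a Peano subgroup $G \supset H$ with $G/H$ countable, and $H$ normal in $\pi_1(X,x_0)$. By Theorem~\ref{BasicPeanoCoveringThm} the goal is to show that $p_H\colon (\widehat X_H,\widehat x_0)\to (X,x_0)$ has the unique path lifting property, since path lifting is automatic via standard lifts. I would factor
\[
p_H \;=\; p_G \circ q,\qquad q\colon \widehat X_H \to \widehat X_G,
\]
where $q$ is the natural projection induced by the inclusion $H\subset G$; its fiber over $\widehat x_0$ is $G/H$, which is countable by assumption.

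The main step is to apply Corollary~\ref{HAndPathLifting} with its ``$G$'' taken to be our $H$ and its ``$H$'' taken to be our $G$. The hypotheses needed are: $H\subset G$ (given), $G/H$ countable (given), $H$ normal in $\pi_1(X,x_0)$ (given), and that $X$ is $(G,H)$-homotopically Hausdorff. The last condition follows immediately from the hypothesis that $X$ is homotopically Hausdorff relative to $H$: by the remark following Definition~\ref{HGHausdorffDef} this means $X$ is $(\pi_1(X,x_0),H)$-homotopically Hausdorff, and since $G\subset \pi_1(X,x_0)$, the same neighborhoods witness the $(G,H)$ version. Thus Corollary~\ref{HAndPathLifting} yields that $q\colon \widehat X_H\to \widehat X_G$ has the uniqueness of path lifts property.

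Finally, since $G$ is a Peano subgroup, Theorem~\ref{BasicPeanoCoveringThm} gives that $p_G$ itself has the unique path lifting property. Combining, if $\alpha,\beta\colon (I,0)\to (\widehat X_H,\widehat x_0)$ are two lifts of a common path $\gamma$ in $(X,x_0)$, then $q\circ\alpha$ and $q\circ\beta$ are two lifts of $\gamma$ under $p_G$, hence equal; and then $\alpha,\beta$ are two lifts of this common path under $q$, hence equal. So $p_H$ has unique path lifting, and $H$ is a Peano subgroup by Theorem~\ref{BasicPeanoCoveringThm}. The only point requiring any care is the verification that $(H,G)$ meets the hypotheses of Corollary~\ref{HAndPathLifting} in the correct order (normality must sit with the smaller group $H$, which is exactly what is assumed); everything else is a matter of composing the two uniqueness statements.
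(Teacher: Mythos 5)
Your proof is correct and follows essentially the same route as the paper: for the converse it composes two lifts with the projection $\widehat X_H\to\widehat X_G$, uses the unique path lifting of $p_G$ (from $G$ being a Peano subgroup via Theorem~\ref{BasicPeanoCoveringThm}) to identify the compositions, and then applies Corollary~\ref{HAndPathLifting} to conclude the lifts themselves agree. Your explicit verification that homotopically Hausdorff relative to $H$ implies the $(G,H)$-homotopically Hausdorff hypothesis, and that normality sits with the smaller group, is a welcome expansion of details the paper leaves implicit.
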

\proof By \ref{PeanoSubsAreHH}, $X$ is homotopically Hausdorff
 relative to $H$ if $H$ is a
 Peano subgroup of $\pi_1(X,x_0)$.
 \par Suppose $X$ is homotopically Hausdorff
 relative to $H$. Given two lifts in $\widehat X_H$ of the same path in $X$,
 their composition with $\widehat X_H\to \widehat X_G$ are the same by
 \ref{BasicPeanoCoveringThm}. By
\ref{HAndPathLifting} the two lifts are identical and \ref{BasicPeanoCoveringThm}
says $H$ is a Peano subgroup of $\pi_1(X,x_0)$.
\endproof

\begin{corollary}\label{CountableNormalSubsArePeano}
Suppose $H$ is a normal subgroup of $\pi_1(X,x_0)$.
If $\pi_1(X,x_0)/H$ is countable,
then $H$ is a
 Peano subgroup of $\pi_1(X,x_0)$ if and only if $X$ is homotopically Hausdorff
 relative to $H$.
\end{corollary}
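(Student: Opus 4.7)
The plan is to deduce this directly from \ref{CountableIndexNormalSubsOfPeanoArePeano} by taking $G=\pi_1(X,x_0)$. The forward implication, that $H$ being a Peano subgroup forces $X$ to be homotopically Hausdorff relative to $H$, is already recorded in \ref{PeanoSubsAreHH} and needs no countability hypothesis; so the real content is the converse.

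For the converse, suppose $X$ is homotopically Hausdorff relative to $H$ and $\pi_1(X,x_0)/H$ is countable. I would set $G=\pi_1(X,x_0)$ and check the hypotheses of \ref{CountableIndexNormalSubsOfPeanoArePeano}: normality of $H$ is given, $G\supset H$ with $G/H$ countable is given, and $X$ is $(G,H)$-homotopically Hausdorff by assumption. The one step that demands a (trivial) verification is that $G=\pi_1(X,x_0)$ itself qualifies as a Peano subgroup of $\pi_1(X,x_0)$. This is automatic from \ref{BasicPeanoCoveringThm}: a subgroup $H'$ is a Peano subgroup whenever the image of $\pi_1(p_{H'})$ is contained in $H'$, and for $H'=\pi_1(X,x_0)$ the containment is vacuous. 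Equivalently, by \ref{CaseHBeingWholeGroup} the endpoint projection $p_{\pi_1(X,x_0)}\colon \widehat X_{\pi_1(X,x_0)}\to X$ is an injection, so it has the uniqueness of path lifts property; combined with the path lifting property furnished by standard lifts, this turns it into a Peano covering map via \ref{ProjIsPeanoCMCharThm}.

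With $G$ in hand as a Peano subgroup containing $H$ with countable quotient, \ref{CountableIndexNormalSubsOfPeanoArePeano} delivers immediately that $H$ is a Peano subgroup of $\pi_1(X,x_0)$. There is no real obstacle in the present argument: the genuine work, namely promoting $(G,H)$-homotopical Hausdorffness through countably many cosets to the uniqueness of path lifts for $\widehat X_H\to \widehat X_G$, has already been absorbed into \ref{HAndPathLifting} and is reused by the preceding corollary, so the only step to identify here is the choice $G=\pi_1(X,x_0)$.
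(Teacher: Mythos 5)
Your proof is correct and is exactly the argument the paper intends (the corollary is stated without proof as an immediate specialization of \ref{CountableIndexNormalSubsOfPeanoArePeano} to $G=\pi_1(X,x_0)$). Your explicit verification that $\pi_1(X,x_0)$ is itself a Peano subgroup, via the vacuous containment in condition (c) of \ref{BasicPeanoCoveringThm} or via the injectivity of $p_{\pi_1(X,x_0)}$ from \ref{CaseHBeingWholeGroup}, is the only detail the paper leaves unsaid, and you have supplied it correctly.
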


\section{Appendix: Pointed versus unpointed}\label{SECTION-PointedVsUnpointed}

In this section we discuss relations between pointed and unpointed lifting properties.

\begin{lemma}\label{PointedUPLPImpliesUPLPLem}
If $f\colon (X,x_0)\to (Y,y_0)$ has the uniqueness of path lifts property
and $X$ is path-connected, then $f\colon X\to Y$
has the uniqueness of path lifts property.
\end{lemma}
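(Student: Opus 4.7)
The plan is to reduce the unpointed situation to the pointed one by prepending a fixed path from the base point $x_0$ to the common starting point of the two given lifts. Concretely, suppose $\alpha,\beta\colon I\to X$ are two paths with $\alpha(0)=\beta(0)=x_1$ and $f\circ\alpha=f\circ\beta$. Since $X$ is path-connected, choose a path $\gamma\colon (I,0)\to (X,x_0)$ with $\gamma(1)=x_1$.

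Next, form the concatenations $\gamma\ast\alpha$ and $\gamma\ast\beta$. Both are paths $(I,0)\to (X,x_0)$, and their images under $f$ coincide, being equal to $(f\circ\gamma)\ast(f\circ\alpha)=(f\circ\gamma)\ast(f\circ\beta)$. The pointed uniqueness of path lifts property of $f\colon (X,x_0)\to (Y,y_0)$ then forces $\gamma\ast\alpha=\gamma\ast\beta$ as paths in $X$. Restricting to the second half $[1/2,1]$ of the parameter interval (where both concatenations are given by the reparametrization $t\mapsto \alpha(2t-1)$, respectively $t\mapsto \beta(2t-1)$) yields $\alpha=\beta$.

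There is no real obstacle here, since the pointed hypothesis applies verbatim once both paths are anchored at $x_0$, and path-connectedness of $X$ is exactly what is needed to produce the anchoring path $\gamma$. The only thing worth noting is that the argument uses only the uniqueness of path lifts property of $f$ on paths starting at $x_0$; no path lifting is assumed or needed.
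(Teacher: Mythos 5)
Your proof is correct and is essentially identical to the paper's own argument: both prepend a path $\gamma$ from $x_0$ to the common starting point, apply the pointed uniqueness of path lifts to $\gamma\ast\alpha$ and $\gamma\ast\beta$, and conclude $\alpha=\beta$. Your added remark that only uniqueness (not existence) of lifts is used is accurate and consistent with the paper's definitions.
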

\proof Given two paths $\alpha$ and $\beta$ in $X$ originating at the same point
and satisfying $f\circ \alpha=f\circ\beta$, choose a path $\gamma$ in $X$
from $x_0$ to $\alpha(0)$. Now $f\circ (\gamma\ast\alpha)=f\circ (\gamma\ast\beta)$,
so $\gamma\ast\alpha=\gamma\ast\beta$ and $\alpha=\beta$.
\endproof

\begin{lemma}\label{PointedUPLPImpliesUPLPLem}
If $f\colon (X,x_0)\to (Y,y_0)$ has the unique path lifting property
and $X$ is path-connected, then $f\colon X\to Y$
has the unique path lifting property.
\end{lemma}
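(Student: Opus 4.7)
The plan is to deduce both halves of the unpointed unique path lifting property from the pointed version, using path-connectedness of $X$ to reduce every situation to one based at $x_0$. The previous lemma already gives uniqueness of path lifts in the unpointed sense (the proof there only used path-connectedness to prepend a path $\gamma$ from $x_0$ to the common starting point), so the remaining task is to establish the unpointed path lifting property.

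For this, suppose $\alpha\colon I\to Y$ is a path and $x\in f^{-1}(\alpha(0))$ is arbitrary. Since $X$ is path-connected, choose a path $\gamma\colon I\to X$ from $x_0$ to $x$. Form the concatenation $\tau = (f\circ\gamma)\ast\alpha$, a path in $Y$ originating at $y_0$. By the pointed path lifting property of $f$, there is a lift $\widetilde\tau\colon (I,0)\to (X,x_0)$ of $\tau$. Restricting $\widetilde\tau$ to the first half $[0,\tfrac12]$ (and reparametrizing) gives a lift of $f\circ\gamma$ starting at $x_0$; but $\gamma$ is also such a lift, so by the pointed uniqueness of path lifts they agree, and in particular $\widetilde\tau(\tfrac12)=\gamma(1)=x$. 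Hence the restriction of $\widetilde\tau$ to $[\tfrac12,1]$, suitably reparametrized, is a lift of $\alpha$ starting at $x$, as required.

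Combining this with the preceding lemma (uniqueness of path lifts transfers from pointed to unpointed under path-connectedness) yields the unpointed unique path lifting property. The only subtle step is the application of pointed uniqueness to identify the first half of $\widetilde\tau$ with $\gamma$; everything else is a bookkeeping argument about concatenation and reparametrization, so I do not anticipate any serious obstacle.
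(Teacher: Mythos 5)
Your argument is correct and is essentially the paper's own proof: both deduce uniqueness from the preceding lemma and obtain the unpointed path lifting property by lifting the concatenation $(f\circ\gamma)\ast\alpha$ from $x_0$ and using pointed uniqueness of lifts to identify the first half of the lift with $\gamma$, so that the second half is the desired lift of $\alpha$ starting at $x$. The only piece you omit is surjectivity of $f\colon X\to Y$, which the paper's definition of the unpointed unique path lifting property requires and which the paper checks by lifting a path from $y_0$ to an arbitrary $y_1\in Y$ and taking the endpoint of the lift.
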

\proof In view of \ref{PointedUPLPImpliesUPLPLem} it suffices to show
$f\colon X\to Y$ is surjective and has the path lifting property.
If $y_1\in Y$, we pick a path $\alpha$ from $y_0$ to $y_1$ and lift it to
$(X,x_0)$. The endpoint of the lift maps to $y_1$, hence $f$ is surjective.
Suppose $\alpha$ is a path in $Y$ and $f(x_1)=\alpha(0)$.
Choose a path $\beta$ in $X$ from $x_0$ to $x_1$ and lift
$(f\circ \beta)\ast\alpha$ to a path $\gamma$ in $(X,x_0)$.
Due to the uniqueness of path lifts property of $f\colon (X,x_0)\to (Y,y_0)$
one has $\gamma(t)=\beta(2t)$ for $t\leq\frac{1}{2}$.
Hence $\gamma(\frac{1}{2})=x_1$ and $\lambda$ defined
as $\lambda(t)=\gamma(\frac{1}{2}+\frac{t}{2})$ for $t\in I$
is a lift of $\alpha$ originating from $x_1$.
\endproof

\begin{lemma}[Lemma 15.1 in \cite{Hu}]\label{PointedSerrePlusFibersImpliesUPLPLem}
If $f\colon X\to Y$
is a Serre $1$-fibration, then $f$ has the unique path lifting property if
and only if path components of fibers of $f$ are trivial.
\end{lemma}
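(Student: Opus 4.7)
The plan is to handle the two directions separately. The forward direction is immediate from the definitions: if $\eta\colon I\to f^{-1}(y)$ is any path contained in a single fiber, then $\eta$ and the constant path at $\eta(0)$ are two lifts of the constant path at $y$ sharing the initial point $\eta(0)$, and the uniqueness of path lifts forces $\eta$ to be constant. Hence path components of every fiber are singletons.

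For the converse, suppose path components of all fibers of $f$ are trivial, and let $\alpha,\beta\colon I\to X$ be two lifts of a common path $\gamma\colon I\to Y$ with $\alpha(0)=\beta(0)=x_0$. The strategy is to produce a continuous filler $\tilde H\colon I\times I\to X$ of the constant-in-$t$ homotopy $H(s,t)=\gamma(s)$ that satisfies the three boundary conditions $\tilde H(s,0)=\alpha(s)$, $\tilde H(s,1)=\beta(s)$, and $\tilde H(0,t)=x_0$. Once such $\tilde H$ is in hand, for each fixed $s\in I$ the assignment $t\mapsto\tilde H(s,t)$ is a path in the fiber $f^{-1}(\gamma(s))$ from $\alpha(s)$ to $\beta(s)$, so by hypothesis it is constant and we get $\alpha(s)=\beta(s)$ for every $s$.

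To build $\tilde H$, first define it on the three-sided subcomplex $L=(I\times\{0\})\cup(\{0\}\times I)\cup(I\times\{1\})\subset I\times I$ by the three formulas above; this is consistent at the two left corners because $\alpha(0)=\beta(0)=x_0$, and it lifts $H|_L$. Since $L$ is a strong deformation retract of $I\times I$ (push the interior and the open right edge $\{1\}\times(0,1)$ radially inward toward $L$), the Serre $1$-fibration property of Definition \ref{SerreFibrationPropDef}, which supplies fillers given initial data on the bottom edge alone, extends $\tilde H|_L$ to all of $I\times I$ by the standard HLP-reduction: one reparametrizes the deformation retraction into a homotopy in $Y$, lifts it using the bottom-edge HLP, and then evaluates at the appropriate endpoint to obtain the filler.

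The main obstacle is precisely this reduction from bottom-edge HLP to three-sided HLP on the square; it is a classical maneuver in homotopy theory and is the heart of the proof of Lemma 15.1 in \cite{Hu}.
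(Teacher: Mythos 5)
Your overall strategy is sound and your forward direction is exactly right, but in the converse you take a genuinely different route from the paper. The paper lifts the standard null-homotopy of $f\circ(\alpha^{-1}\ast\beta)$ to a constant path, using only the bottom-edge homotopy lifting property supplied directly by Definition \ref{SerreFibrationPropDef}, and then applies the triviality of path components of fibers to suitable slices of the lifted square. You instead lift the homotopy that is constant in $t$, prescribing the lift on three sides of the square; this makes the final step (each vertical track lies in a single fiber, hence is constant) cleaner and more transparent than the paper's ``due to the way the standard homotopy $H$ is defined.'' The price is that you need the homotopy lifting property relative to the three-sided subcomplex $L$, which the definition of a Serre $1$-fibration does not hand you directly.

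That reduction is the one place where your justification, as written, does not work. A strong deformation retraction of $I\times I$ onto $L$ does not let you invoke the Serre $1$-fibration property: to ``reparametrize the deformation retraction into a homotopy and lift it'' you would be lifting a homotopy whose parameter domain is the whole square, i.e., you would need the homotopy lifting property for $K=I^2$ (a Serre $2$-fibration), which is not assumed. The correct and standard mechanism is a homeomorphism of pairs: $L$ and $I\times\{0\}$ are both arcs in the boundary circle of the disk $I\times I$, so there is a self-homeomorphism $h$ of $I\times I$ with $h(L)=I\times\{0\}$; apply the bottom-edge HLP to $H\circ h^{-1}$ with initial lift $\tilde H|_L\circ h^{-1}$ on $I\times\{0\}$, and compose the resulting filler with $h$ to obtain the three-sided filler. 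This is the same device used classically to derive the lifting property for $(I^{n+1}, I^n\times\{0\}\cup\partial I^n\times I)$ from the cube HLP. With that repair your argument is complete and correct.
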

\proof Suppose the fibers of $f$ have trivial path components
and $\alpha,\beta$ are two lifts of the same path in $Y$ that
originate at $x_1\in X$. Let $H\colon I\times I\to Y$ be the
standard homotopy from $f\circ (\alpha^{-1}\ast\beta)$ to the constant
path at $f(x_1)$. There is a lift $G\colon I\times I\to X$ of $H$
starting from $\alpha^{-1}\ast\beta$.
As path components of $f$ are trivial, $\alpha=\beta$ due to the way
the standard homotopy $H$ is defined.
\endproof

\begin{lemma}\label{PointedSerreImpliesSerreLem}
Suppose $n\ge 1$.
If $f\colon (X,x_0)\to (Y,y_0)$ is a Serre $n$-fibration,
both $X$ and $Y$ are path-connected, and $f$ has the uniqueness of path lifts property, then $f\colon X\to Y$
is a Serre $n$-fibration.
\end{lemma}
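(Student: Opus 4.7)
I would first define the candidate filler $G$ pointwise using the unique path lifting property (available from Lemma~\ref{PointedUPLPImpliesUPLPLem} in unpointed form), and then prove its continuity by a change-of-basepoint reduction to the pointed Serre $n$-fibration hypothesis.

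Given an unpointed commutative square
\[\alpha\colon I^n\times\{0\}\to X,\qquad H\colon I^n\times I\to Y\]
with $f\circ\alpha=H|_{I^n\times\{0\}}$, I define $G\colon I^n\times I\to X$ by letting $G(z,\cdot)$ be the unique lift of the path $H(z,\cdot)$ that starts at $\alpha(z,0)$. Existence of such a lift follows from the unpointed path lifting established in Lemma~\ref{PointedUPLPImpliesUPLPLem}, and uniqueness makes $G$ well defined. The identities $f\circ G=H$ and $G|_{I^n\times\{0\}}=\alpha$ hold by construction, so the content of the proof is the continuity of $G$.

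To obtain continuity I would prove as a sublemma that for every $x_1\in X$ the map $f\colon(X,x_1)\to(Y,f(x_1))$ inherits the pointed Serre $n$-fibration property. Applied with $x_1=\alpha(p,0)$ for $p=(\frac{1}{2},\dots,\frac{1}{2})\in I^n$, the sublemma directly supplies a continuous filler $\widetilde G$ of the original diagram, which is now pointed at $(x_1,f(x_1))$; the unpointed unique path lifting property, applied fibrewise to the paths $H(z,\cdot)$, forces $\widetilde G=G$, so $G$ is continuous. To establish the sublemma, I would use path-connectedness of $X$ to choose a path $\delta\colon(I,0)\to(X,x_0)$ ending at $x_1$, set $\beta=f\circ\delta$, and translate any pointed lifting problem based at $(x_1,f(x_1))$ into an auxiliary pointed problem based at $(x_0,y_0)$ on $I^n\times I$ that first traverses $\delta$ and then runs the rescaled original data. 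Applying the given pointed Serre $n$-fibration at $(x_0,y_0)$ and peeling off the auxiliary phase via uniqueness of path lifts then yields a filler for the original problem.

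The main obstacle is the precise construction of this auxiliary diagram. One must spread the one-dimensional datum $\delta$ across an $n$-dimensional slab inside $I^n\times I$ via a bump function on $I^n$ that equals $1$ at $p$ and vanishes on $\partial I^n$, keep the partial lift on the bottom slice $I^n\times\{0\}$ constantly equal to $x_0$, and arrange the data on the two subslabs (the traversal slab $I^n\times[0,\tfrac{1}{2}]$ and the original slab $I^n\times[\tfrac{1}{2},1]$) so that they meet continuously along $t=\tfrac{1}{2}$. Uniqueness of path lifts does the delicate gluing along this interface and, at the end, identifies the constructed filler with the pointwise-defined $G$, which completes both the sublemma and the lemma.
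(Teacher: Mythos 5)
Your overall strategy is the right one and is essentially the paper's: reduce the unpointed lifting problem to the pointed hypothesis by prepending, in the time direction, a homotopy that connects the constant map at $x_0$ to the given partial lift, and then use the uniqueness of path lifts to peel that prepended slab off again (the paper does this in one step, extending the data to $I^n\times[-1,1]$ rather than routing it through a change-of-basepoint sublemma and a separately defined pointwise filler, but that repackaging is harmless). The genuine gap is in your construction of the traversal slab. Spreading $\delta$ over $I^n\times[0,\tfrac{1}{2}]$ by a bump function $\phi$ produces, at the interface $t=\tfrac{1}{2}$, the map $z\mapsto\beta(\phi(z))$, whose image lies in the arc $\beta(I)$. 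The bottom of the original slab is $z\mapsto f(\alpha'(z,0))$ for the given (arbitrary, non-constant) partial lift $\alpha'$. These two maps do not agree in general, so the two subslabs do not meet continuously along $t=\tfrac{1}{2}$, and no choice of bump function can repair this: you are trying to interpolate between the constant map at $y_0$ and an arbitrary map $f\circ\alpha'$, and a path datum alone cannot reach the latter. Moreover, uniqueness of path lifts cannot "do the delicate gluing along this interface": it is a statement about lifts into $X$ and is of no help in making the $Y$-level homotopy continuous, which must be arranged by hand before any lifting occurs; its only correct role is afterwards, to force the pointed filler to agree with the prepended partial lift on the traversal slab and hence with $G$ on the original one.

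The missing ingredient is the contractibility of $I^n$. Define the prepended homotopy in $X$ (not just in $Y$) as the concatenation of two pieces: first the constant-in-$z$ traversal of $\delta$ from $x_0$ to $x_1=\alpha'(p,0)$, and then the map $(z,s)\mapsto\alpha'(p+s(z-p),0)$, i.e.\ the straight-line contraction of $I^n$ to its center composed with $\alpha'$. This is a homotopy in $X$ from the constant map at $x_0$ to $\alpha'(\cdot,0)$ exactly, so pushing it forward by $f$ and splicing with $H'$ gives continuous data on the enlarged cylinder, pointed at the center of the new bottom face as required by Definition~\ref{SerreFibrationPropDef}. With that correction your argument closes up and coincides with the proof of Lemma~\ref{PointedSerreImpliesSerreLem} in the paper; no bump function is needed anywhere.
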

\proof Suppose $H\colon I^n\times I\to Y$ is a homotopy
and $G\colon I^n\times \{0\}\to X$ is its partial lift.
Choose a path $\alpha$ in $X$ from $x_0$ to $G(b,0)$, where $b$ is the center of $I^n$.
We can extend $G$ to a homotopy $G\colon I^n\times [-1,0]\to X$ starting from the constant map to $x_0$. By splicing $f\circ G$ with original $H$, we can extend $H$
to $H\colon I^n\times [-1,1]\to Y$. That $H$ can be lifted to $X$ and the lift
must agree with $G$ on $I^n\times [-1,0]$ due to the uniqueness of path lifts property of
$f$.
\endproof


\begin{thebibliography}{99}

\bibitem{BP3}
V. Berestovskii, C. Plaut, {\em Uniform universal covers of uniform spaces},
Topology Appl. 154 (2007), 1748--1777.


\bibitem{BogSie}  W.A.Bogley, A.J.Sieradski, {\em Universal path spaces}, http://oregonstate.edu/\symbol{126}bogleyw/\#research


\bibitem{BDLM1} N.Brodskiy, J.Dydak, B.Labuz, A.Mitra,
{\em Rips complexes and covers in the uniform category}, in preparation

\bibitem{BDLM2} N.Brodskiy, J.Dydak, B.Labuz, A.Mitra,
{\em Topological and uniform structures on the fundamental group}, in preparation

\bibitem{CanCon Big}
J.W. Cannon, G.R. Conner,
{\em The big fundamental group, big Hawaiian earrings, and the big free groups},
Topology and its Applications 106 (2000), no. 3, 273--291.

\bibitem{CanCon}
J.W. Cannon, G.R. Conner,
{\em On the fundamental groups of one-dimensional spaces},
Topology and its Applications 153 (2006), 2648--2672.

\bibitem{ConFea} G. R. Conner and D. Fearnley, {\em Fundamental groups of spaces which are not locally path connected},
preprint (1998).

\bibitem{ConLam} G. R. Conner and J. Lamoreaux, {\em On the existence of universal covering spaces for metric spaces and subsets of the Euclidean plane},
Fundamenta Math. 187 (2005), 95--110.


  \bibitem{DydSeg}
J. Dydak and J. Segal, {\em Shape theory: An introduction},
Lecture Notes in Math. 688, 1--150, Springer Verlag 1978.

\bibitem{Eda} K. Eda, {\em The fundamental groups of one-dimensional spaces
and spatial homomorphisms},
Topology and Its Applications, 123 (2002) 479--505.

\bibitem{EdaKaw} K. Eda and K. Kawamura, {\em The fundamental group of one-dimensional spaces},
Topology and Its Applications, 87 (1998) 163--172.

\bibitem{Fab} P.Fabel, {\em Metric spaces with discrete topological fundamental group}, Topology and its Applications 154 (2007), 635--638.

\bibitem{FisGui} H. Fischer and C.R. Guilbault, {\em On the fundamental groups of trees of manifolds},
Pacific Journal of Mathematics 221 (2005) 49--79.

\bibitem{FisZasFirst} H. Fischer, A. Zastrow, {\em The fundamental groups of subsets of closed surfaces
inject into their first shape groups}, Algebraic and Geometric Topology 5 (2005)
1655--1676.


\bibitem{FisZas}  H.Fischer, A.Zastrow, {\em Generalized universal
coverings and the shape group}, Fundamenta Mathematicae 197 (2007), 167--196.

\bibitem{HilWyl}
P.J. Hilton, S. Wylie, {\em Homology theory: An introduction to algebraic topology}, Cambridge University Press, New York 1960 xv+484 pp.

\bibitem{Hu} Sze-Tsen Hu, {\em Homotopy theory}, Academic Press,
New York and London, 1959.

\bibitem{Lim}
E. L. Lima, {\em Fundamental groups and covering spaces},
AK Peters, Natick, Massachusetts, 2003.

\bibitem{MarSeg}
S. Marde\v si\' c and J. Segal,
 {\em Shape theory}, North-Holland Publ.Co., Amsterdam 1982.

\bibitem{Mun}
J. R. Munkres, {\em Topology}, Prentice Hall, Upper Saddle River, NJ 2000.

\bibitem{Paw} J.Pawlikowski, {\em The fundamental group
of a compact metric space},
Proceedings of the
American Mathematical Society, 126 (1998), 3083--3087.

\bibitem{RepZas}
 D.Repov\v s and A.Zastrow, Shape injectivity is not implied by being
 strongly homotopically Hausdorff, preprint.

\bibitem{Spa}
E. Spanier,
{\em Algebraic topology}, McGraw-Hill, New York 1966.



\end{thebibliography}
\end{document}